\documentclass[reqno]{amsart}

\usepackage{latexsym, amsfonts, amsthm, amsmath, amssymb, amscd, epsfig, amsrefs}
\usepackage{color}
\usepackage{mathrsfs}
\allowdisplaybreaks[1]

\usepackage{tikz}

\newtheorem{cor}{Corollary}
\newtheorem{lemma}{Lemma}[section]
\newtheorem{Add in proof}{Add in proof}[section]
\newtheorem{prop}{Proposition}[section]

\newtheorem{remark}{Remark}
\newtheorem{theorem}{Theorem}

\newcommand{\R}{\ensuremath{\mathbb{R}}}
\newcommand{\na}{\ensuremath{\D}}
\def \p{\partial}

\def\D{\nabla}
\def\MH{\mathcal{H}}

\def\<{\left\langle}
\def\>{\right\rangle}
\def\({\left(}
\def\){\right)}

\def\limsup{\operatornamewithlimits{lim\,sup}}

\let\div\relax \DeclareMathOperator{\div}{div}

\DeclareMathOperator{\dist}{dist}
\DeclareMathOperator{\tr}{tr}
\newcommand\alabel[1]{\addtocounter{equation}{1}\tag{\theequation}\label{#1}}

\usepackage{xcolor}
\definecolor{darkred}{rgb}{0.7,0.0,0.0}


\begin{document}
\title{Existence and convergence of the Beris-Edwards system with general Landau-de Gennes energy}

\numberwithin{equation}{section}
\author{Zhewen Feng, Min-Chun Hong and Yu Mei}

\address{Zhewen Feng, Department of Mathematics, The University of Queensland\\
	Brisbane, QLD 4072, Australia}
\email{z.feng@uq.edu.au}

\address{Min-Chun Hong, Department of Mathematics, The University of Queensland\\
Brisbane, QLD 4072, Australia}
\email{hong@maths.uq.edu.au}

\address{Yu Mei, School of Mathematics and Statistics,  Northwestern Polytechnical University, Xi'an Shaanxi,710129, P.R.China.}
\email{yu.mei@nwpu.edu.cn}

	\begin{abstract}
In this paper, we investigate the Beris-Edwards system  for both biaxial and uniaxial $Q$-tensors with a general Landau-de Gennes energy
density depending on four non-zero elastic constants.
We prove  existence of the strong solution of  the Beris-Edwards system  for uniaxial $Q$-tensors up to a maximal time. Furthermore, we
prove that   the strong solutions of the Beris-Edwards system for  biaxial $Q$-tensors  converge smoothly to   the solution of the
Beris-Edwards system  for uniaxial $Q$-tensors up to its maximal existence time.
	\end{abstract}

	\makeatletter
	\@namedef{subjclassname@2020}{%
	\textup{2020} Mathematics Subject Classification}
	\makeatother
	\subjclass[2020]{35K51,35Q30,76A15} \keywords{Beris-Edwards system, Q-tensor}

\maketitle

\pagestyle{myheadings} \markright {Convergence for the Beris-Edwards system}
\section{Introduction}
The classical  Ericksen-Leslie
theory (\cite {Er}, \cite{Le}) successfully describes the dynamic
flow of uniaxial nematic liquid crystals. In \cite{BE94}, Beris-Edwards  pointed out that the  Ericksen-Leslie flow theory  has a limited domain of applications to liquid crystals. Therefore,    based on the celebrated Landau-de Gennes $Q$-tensor theory, Beris-Edwards \cite{BE94}  proposed a general hydrodynamic  theory to describe   flows of liquid crystals in modeling both uniaxial and biaxial nematic liquid crystals.

In 1971, de Gennes \cite{De}     introduced  a $Q$-tensor order parameter to establish   the  Landau-de Gennes theory, which    has been one of the successful continuum theories in modeling both uniaxial and biaxial nematic liquid crystals (cf. \cite{DP}, \cite{Ba}). Mathematically, the Landau-de Gennes theory is described by
the space of   symmetric and traceless $3\times3$ matrices
\begin{equation*}
	S_0:=\left\{Q\in \mathbb M^{3\times 3}:\quad Q^T=Q, \, \mbox{tr }Q  =0\right\},
\end{equation*}
where  $\mathbb M^{3\times 3}$ denotes the space of $3\times 3$ matrices.

Let $U$ be a domain in $\R^3$.  For a tensor $Q\in W^{1,2}(U; S_0)$,
the original Landau-de Gennes energy is defined by
\begin{equation}\label{LG energy}
	E_{LG}(Q; U):=\int_{U}f_{LG}\,dx=\int_{U}(  \tilde f_E +  \tilde f_B)\,dx.
\end{equation}
Here $\tilde  f_E$ is the elastic energy density  with elastic constants $L_1,...,L_4$ of the form
\begin{equation}\label{LG}
	\tilde f_E(Q,\D Q):=\frac{ L_1}{2}|\D Q|^2+\frac{ L_2}{2} \frac{\p Q_{ij}}{\p x_j} \frac{\p Q_{ik}}{\p x_k} +\frac{ L_3}{2}\frac{\p
		Q_{ik}}{\p x_j}\frac{\p Q_{ij}}{\p x_k}+\frac{ L_4}{2}Q_{lk}\frac{\p Q_{ij}}{\p x_l}\frac{\p Q_{ij}}{\p x_k}
\end{equation}
in which and in the sequel, we adopt the Einstein summation convention for repeated indices and $\tilde  f_B(Q)$ is  a bulk  energy density defined by
\begin{equation}\label{BE}
	\tilde f_B(Q):=-\frac{ a}{2}\tr( Q^2)-\frac{ b}{3}\tr( Q^3)+\frac{ c}{4}\left[\tr( Q^2)\right]^2
\end{equation}
with three positive material constants $a,b,c$.

In \cite{De}, de Gennes discovered the first  two terms of the elastic energy density in (\ref{LG}) with $L_3=L_4=0$.
Later,  combining the work of  Schiele-Trimper  \cite {ST} with the effect of Berreman-Meiboom \cite{BM84},  Dickmann  \cite  {Di}  completed  the  full density \eqref{LG} with two additional terms (cf. \cite{MGKB}, \cite{Ba}). However, for the case of $L_4\neq 0$,  Ball-Majumdar  \cite {BM} found an  example that  the  Landau-de Gennes energy density  \eqref{LG} does not  satisfy the coercivity condition. In fact, Golovaty et al. \cite{GNS20} said that ``From the standpoint of energy minimization, unfortunately, such a version of Landau-de Gennes becomes problematic, since the inclusion of the cubic term leads to an energy which is unbounded from below''. Therefore, there is a problem between mathematical and physical theory on nematic liquid crystals in the case of $L_4\neq 0$. In their book \cite{DP}, de Gennes and Prost said that {\it``the bending constant is much larger than others''}; i.e. $k_3>\max\{{k_1, k_2\}}$ at different temperatures. For example, for p-azoxyanisole (PAA) at $134^\circ C$, $k_1=4.05$, $k_2=2.1$, $k_3=5.77$, $k_4=3.08,$ where the unit is $10^{-12}\,N$. By the physical experiments on liquid crystals, the elastic constant $ L_4=\frac{1}{2s_+^3}(k_3-k_1)$  is not  zero in general.

To solve the above coercivity  problem on the Landau-de Gennes energy density,  Feng and Hong \cite {FH}  proposed a new Landau-de Gennes energy density, which  keeps physical quantities of  the original Landau-de Gennes density. More precisely, it was observed in \cite {FH} that for uniaxial tensors,
the original
third order  term  on  $L_4$ in  (\ref {LG}), proposed by
Schiele and Trimper  \cite{ST}*{p.~268} in physics, is a linear combination of a fourth order term and a second order term; i.e. for $Q\in S_*$, 	we have
\begin{align*}\alabel{third}
	Q_{lk}\frac{\p Q_{ij}}{\p x_l}\frac{\p Q_{ij}}{\p x_k}=
	\begin{cases}
		\frac{3}{s_+}Q_{ln}Q_{kn}\frac{\p Q_{ij}}{\p x_l} \frac{\p Q_{ij}}{\p x_k}-\frac {2s_+}3|\na Q|^2 &\text{ for } L_4\geq0,
		\\
		-\frac{3}{s_+} (|Q|^2|\D Q|^2+  Q_{ln}Q_{kn}\frac{\p Q_{ij}}{\p x_l} \frac{\p Q_{ij}}{\p x_k})+ \frac{4s_+}{3}|\D Q|^2&\text{ for } L_4<0,
	\end{cases}
\end{align*}
where $S_*$ is the space  of all uniaxial $Q$-tensors defined by
\[S_*:=\left\{Q  \in S_0:\quad Q =s_+ (u\otimes u-\frac 13 I),\quad  u\in S^2, \quad s_+:=\frac{b+\sqrt{b^2+24ac}}{4c} \right\}.\]
It was observed in \cite{FH}   that
 \[Q_{ln}Q_{kn}\frac{\p Q_{ij}}{\p x_l}\frac{\p Q_{ij}}{\p x_k}=\frac{8}{5} L^{(4)}_5- \frac{2}{5}L^{(4)}_6 +\frac{2}{5}L_7^{(4)},\]
where  $L^{(4)}_5, L^{(4)}_6, L^{(4)}_7$ are three fourth order terms defined in \cite{LMT}    satisfying the same physical invariance as the original cubic term $Q_{lk}\frac{\p Q_{ij}}{\p x_l}\frac{\p Q_{ij}}{\p x_k}$.

Using \eqref{third}, Feng and Hong
\cite {FH}  introduced
a new Landau-de Gennes  energy    given by
\begin{equation}\label{LDG}
	E_{LG}(Q; U):=\int_{U} f(Q,\D Q)\,dx =\int_{U} \(f_E(Q,\D Q)+\frac 1L  f_B(Q) \)\,dx,
\end{equation}
where
\[  f_B(Q):=  \tilde f_B(Q)-\min_{Q\in S_0}  \tilde f_B(Q)\geq 0\]
and
\begin{align*}
	f_E(Q,\D Q)=&\frac{\tilde L_1}{2}|\D Q|^2
	+\frac{L_2}{2} \frac{\p Q_{ij}}{\p x_j}\frac{\p Q_{ik}}{\p x_k}
	+\frac{L_3}{2} \frac{\p Q_{ik}}{\p x_j}\frac{\p Q_{ij}}{\p x_k}+\frac{3L_4}{2s_+} L^{(4)}(Q,\D Q).
	\alabel{f_E}
\end{align*}
Here $\tilde L_1=L_1-\frac{2}{3s_+}  L_4$ for $L_4\geq0$, $\tilde L_1=L_1+\frac{4}{3s_+} L_4$ for $ L_4<0$ and
\begin{align*}
	  L^{(4)}(Q,\D Q):=\begin{cases}  Q_{ln}Q_{kn}\frac{\p Q_{ij}}{\p x_l}\frac{\p Q_{ij}}{\p x_k}&\mbox{    for } L_4\geq0,
		\\
	  Q_{ln}Q_{kn}\frac{\p Q_{ij}}{\p x_l}\frac{\p Q_{ij}}{\p x_k}-|Q|^2|\D Q|^2&\mbox{ for } L_4<0.
	\end{cases}
\end{align*}
In  \eqref{LDG}, the constant $L$ is a rescaled dimensionless parameter, which drives four elastic constants
$\tilde L_1,\cdots,L_4$ to zero simultaneously as $L$ tends to zero. This corresponds to the large body limit which is of great importance in physics (cf. \cite{Ba}, \cite{Ga}).
We always assume that the constants $\tilde L_1,L_2$ and $L_3$ satisfy
\begin{align}\label{L cond}
	&  \tilde L_1+L_3>0,\quad  2\tilde L_1-L_3>0,
	\quad \tilde L_1+\frac53L_2+\frac16L_3>0.
\end{align}
Under the condition \eqref{L cond},  the new Landau-de Gennes elastic energy density in \eqref{LDG} satisfies the coercivity condition; i.e.  $f(Q,\D Q)$ for any $Q\in S_0$ is bounded from below by $\frac \alpha 2|\D Q|^2$ with some $\alpha >0$ (cf. \cite{KRSZ}, \cite{FH}).

In this paper, we investigate the  Beris-Edwards system for the  Landau-de Gennes energy \eqref{LDG} with $L_4\neq 0$.   The  Beris-Edwards system with $L_2=L_3=L_4=0$ has been extensively studied by many authors (see \cite{PZ11}, \cite{PZ12}, \cite{WWZ}). The  Beris-Edwards system  introduced in  \cite{BE94} is a system of coupling Navier-Stokes equations with the gradient flow for the Landau-de Gennes energy. More precisely, let $v:\R^3\to \R^3$ be the velocity of the fluid and let $Q:\R^3 \to S_0$ be  a $Q$-tensor order parameter, which depends on the director of the molecular field. The symmetric and skew-symmetric parts of the tensor $\D v$ are
\[ D =\frac 12(\D v +(\D v)^T ),\quad\Omega =\frac 12(\D v -(\D v)^T).\]
Define $ [Q,\Omega]  :=Q \Omega -\Omega Q $ to be  the Lie bracket product and set
\[S(Q,v)=\xi\Big(D(Q+\frac13 I)+(Q+\frac13 I)D-2(Q+\frac13 I)(Q\cdot D)\Big)- [Q,\Omega].\]
Then the    Beris-Edwards system (cf. \cite{BE94}, \cite{PZ12}) is given by
\begin{align*}
	\p_t v+v\cdot\D v- \nu \Delta v  +\D P=&\D\cdot\Big( \tau (Q,\D Q)+ \sigma (Q,\D Q)\Big),\alabel{OBE1}
	\\
	\D\cdot v=&0,\alabel{OBE2}
	\\
	\p_t Q +v\cdot \D  Q -S(Q,v)=& \Gamma H(Q,\D Q),
	\alabel{OBE3}
\end{align*}
where $H(Q,\D Q)$ is the   molecular field, $P$ is the pressure, the antisymmetric part of the distortion stress $\tau (Q,\D Q)=[Q, H]$ and $\sigma (Q,\D Q)$  is	  the  distortion stress (cf. \cite{QS})  given by
\begin{align*}
\sigma_{ij}(Q, \D Q)=-\xi (QH+HQ+\frac 23 H)_{ij}+2\xi (Q\cdot H)(Q+\frac I 3)_{ij}-\p_{p_{kl}^j} f(Q,\D Q)\D_iQ_{kl}. \alabel{Ixi}
\end{align*}
Here and in the sequel,  we denote $ \p_{p_{kl}^j} f(Q,\D Q):=\frac{\p f(Q,\nabla Q)}{\p (\D_j Q_{kl})}$ with $p=\nabla Q$.

  Paicu and Zarnescu \cite {PZ12} first used the Landau-de Gennes energy   with a parameter $L>0$ to formulate the  Beris-Edwards system    in the case of  $\xi =0$. Later, they \cite{PZ11} also extended their result to the case of $\xi\neq 0$.
  For simplicity,  we only consider the case of  $\xi =0$ and $\Gamma=\nu =1$.
   For the static case, it is well-known  (see, e.g., \cite{MZ}, \cite{NZ})  that  the Landau-de Gennes energy   \eqref{LDG}  is a standard   biaxial approximation of  uniaxial Q-tensors as $L\to 0$. For the reason,  we  investigate \eqref{OBE1}-\eqref{OBE3} for uniaxial Q-tensors through the biaxial approximation  of the Landau-de Gennes energy   \eqref{LDG}. More precisely, as in \cite{WZZ}, we study
  a new rescaled Beris-Edwards system  with a parameter $L>0$, which is different from one  in \cite{PZ12},  defined by:
\begin{align*}
	\p_t v_L+v_L\cdot\D v_L- \Delta  v_L +\D P_L=&\D\cdot\Big([Q_L,\MH(Q_L,\D Q_L)]+\sigma (Q_L,\D Q_L)\Big),\alabel{RBE1}
	\\
	\D\cdot v_L=&0,\alabel{RBE2}
	\\
	\p_t Q_L+v_L\cdot \D  Q_L +[Q_L, \Omega_L]=& \MH(Q_L,\D Q_L)+\frac 1Lg_B(Q_L),\alabel{RBE3}
\end{align*}
where
\begin{align*}
	\MH(Q_L,\D Q_L)_{ij}=&\frac12\(\D_k[\p_{p_{ij}^k} f_E(Q_L,\D Q_L)]+\D_k[\p_{p_{ji}^k} f_E(Q_L,\D Q_L)]\)
	\\
	&- \frac12\(\p_{Q_{ij}} f_E(Q_L,\D Q_L)+\p_{Q_{ji}} f_E(Q_L,\D Q_L)\)
	\\
	&-\frac{\delta_{ij}}3\sum_{l=1}^3\(\D_k[\p_{p_{ll}^k} f_E(Q_L,\D Q_L)]-\p_{Q_{ll}} f_E(Q_L,\D Q_L)\),
	\alabel{Mol}
\end{align*}
the term $g_B(Q_L)$ is defined by
\begin{align*}\alabel{gB}
	g_B(Q_L) :=&aQ_L +b \big( Q_L  Q_L -\frac {1}3\tr(Q^2)I \big)-cQ_L\tr(Q_L ^2)
\end{align*}
and $\sigma (Q_L,\D Q_L)$  is the distortion stress tensor with
\begin{equation*}\label{Mo0}
	\D_j\sigma_{ij}(Q_L,\D Q_L)=-\D_j\(\D_i(Q_L)_{kl}\p_{p_{kl}^j} f_E(Q_L,\D Q_L)\).
\end{equation*}
Set
\[H^2_{Q_e} (\R^3; S_0)=\{Q\in S_0: Q-Q_e\in H^2(\R^3)\},\]
where $Q_e=s_+ ( e \otimes  e -\frac13 I)\in S_*$ and $e \in S^2$ is a constant vector. The assumption on the constant matrix $Q_e$  is needed for studying  uniaxial Q-tensors in below Theorems 2-3.

We call $(Q_L,v_L)$  a strong solution to the system \eqref{RBE1}-\eqref{RBE3} in $\R^3\times (0,T)$ for some $T>0$ if it satisfies the system a.e. in $(x,t)\in\R^3 \times (0,T)$ and
\begin{align*}
	&Q_L\in L^2(0,T;H^3_{Q_e}(\R^3))\cap L^\infty(0,T;H^2_{Q_e}(\R^3)),
	\quad \p_tQ_L\in L^2(0,T;H^1(\R^3)),
	\\
	&v_L\in L^2(0,T;H^2(\R^3))\cap L^\infty(0,T;H^1(\R^3)).
\end{align*}
Then we have
\begin{theorem}[Local Existence]\label{thm loc} For each $L>0$,
	let $(Q_{L,0},v_{L,0})\in H^{2}_{Q_e}(\R^3; S_0)\times H^{1}(\R^3;\R^3)$ be an initial value  satisfying $\div  v_{L,0}=0$
	and $\|Q_{L,0}\|_{L^\infty(\R^3)}\leq K$ for a constant $K>0$.
	Then there is a unique strong
	solution $(Q_L, v_L)$ to the system \eqref{RBE1}-\eqref{RBE3}  in $\R^3\times[0, T_L)$ with initial data $(Q_{L,0},v_{L,0})$ for some
	$T_L>0$.
\end{theorem}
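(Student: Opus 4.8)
The strategy I would follow is the classical one for a quasilinear parabolic system coupled to Navier--Stokes: construct approximate solutions by a regularized scheme, establish uniform a priori estimates in the energy class, pass to the limit, and prove uniqueness by a separate Gronwall argument. Concretely, one may either run a Galerkin scheme (projecting $v_L$ onto divergence-free eigenfunctions of the Stokes operator and $Q_L$ onto eigenfunctions of $-\Delta$) or a linearized iteration in which, at each step, one solves a linear Stokes system for the velocity with the stress $[Q^{(n)},\MH(Q^{(n)},\D Q^{(n)})]+\sigma(Q^{(n)},\D Q^{(n)})$ as forcing, together with a linear parabolic system for $Q^{(n+1)}$ obtained by freezing at $Q^{(n)}$ the $Q$-dependent coefficients of the second-order part of $\MH$ in \eqref{Mol}. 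In either case short-time existence of the approximations is immediate: finite-dimensional ODE theory, respectively linear Stokes theory and linear parabolic maximal regularity, the latter being available because \eqref{L cond} makes the relevant second-order operator uniformly strongly elliptic over $Q\in S_0$, with coefficients whose modulus of continuity is controlled through $H^2(\R^3)\hookrightarrow C^0(\R^3)$.

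The heart of the proof is the uniform a priori estimate. I would differentiate a natural energy controlling $\|v_L(t)\|_{H^1}^2+\|Q_L(t)-Q_e\|_{H^2}^2$, by testing the (differentiated) momentum equation with $v_L$ and $-\Delta v_L$ and the (differentiated) $Q$-equation with $\MH(Q_L,\D Q_L)$ and its derivatives. The first point is the Beris-Edwards cancellation: the divergence of the antisymmetric stress $[Q_L,\MH]$ and of the distortion stress $\sigma$, the transport and convection terms $v_L\cdot\D v_L$, $v_L\cdot\D Q_L$, and the rotation term $[Q_L,\Omega_L]$ combine with $\MH$ so that the top-order coupling cancels, leaving only lower-order commutators. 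The second point is the coercivity \eqref{L cond}, which, applied to the quasilinear elliptic part coming from $\tfrac12(\D_k\p_{p_{ij}^k}f_E+\D_k\p_{p_{ji}^k}f_E)$, produces the parabolic dissipation $\|\D^3Q_L\|_{L^2}^2$, while the $-\Delta v_L$ test produces $\|\D^2v_L\|_{L^2}^2$. The remaining nonlinear terms, together with the cubic bulk term $g_B$ and the quartic coefficients in $L^{(4)}$, I would bound using the three-dimensional embeddings $H^2\hookrightarrow L^\infty\cap W^{1,6}$, $H^1\hookrightarrow L^6$ and Gagliardo-Nirenberg/Ladyzhenskaya interpolation, absorbing the highest-order factors into the two dissipations (the pointwise bound on $Q_L$ used here follows from $H^2\hookrightarrow L^\infty$, the hypothesis $\|Q_{L,0}\|_{L^\infty}\le K$ serving to keep the constants explicit). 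This yields a differential inequality $\tfrac{d}{dt}\mathcal E+c\,\mathcal D\le\Phi(\mathcal E)$ with $\mathcal E=\|v_L\|_{H^1}^2+\|Q_L-Q_e\|_{H^2}^2$, $\mathcal D=\|\D^2v_L\|_{L^2}^2+\|\D^3Q_L\|_{L^2}^2$ and $\Phi$ increasing, so by ODE comparison $\mathcal E$ stays finite on a maximal interval $[0,T)$ with $T=T(\mathcal E(0),K)>0$ and $\int_0^T\mathcal D<\infty$; the equations then give $\p_tQ_L\in L^2(0,T;H^1)$ and $\p_tv_L\in L^2(0,T;L^2)$, so the approximations are bounded in the required class on $[0,T)$.

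Passing to the limit, weak-$\ast$ compactness delivers $Q_L-Q_e\in L^\infty(0,T;H^2)\cap L^2(0,T;H^3)$ and $v_L\in L^\infty(0,T;H^1)\cap L^2(0,T;H^2)$, while Aubin-Lions compactness gives strong convergence of $Q_L$ and $v_L$ in, say, $L^2(0,T;H^2)$ and $L^2(0,T;H^1)$, which is enough to identify all the nonlinear terms and to conclude that the limit solves \eqref{RBE1}--\eqref{RBE3} almost everywhere, hence is a strong solution. Uniqueness follows by writing the equations for the difference of two strong solutions, testing in $L^2$ (for $Q_L-Q_e$ and for $v_L$), using the $L^\infty(0,T;H^2)\times L^\infty(0,T;H^1)$ bounds on the two solutions to control the coefficients, and applying Gronwall. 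The step I expect to be the main obstacle is the a priori estimate of the previous paragraph: one must handle the quasilinear top-order elliptic operator (relying on \eqref{L cond}) simultaneously with the genuinely bidirectional top-order coupling --- third derivatives of $Q_L$ entering the momentum equation through $\D\cdot[Q_L,\MH]$ and second derivatives of $v_L$ entering the $Q$-equation through $v_L\cdot\D Q_L$ and $[Q_L,\Omega_L]$ --- and it is precisely here that the algebraic structure of the Beris-Edwards system has to be exploited so that these terms cancel rather than being merely estimated.
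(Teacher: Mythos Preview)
Your proposal is correct and follows essentially the same route as the paper: the paper takes precisely the linearized-iteration option you describe (freezing the $Q$-dependent coefficients of the principal part of $\MH$ at the previous iterate), proves uniform $H^2_{Q_e}\times H^1$ bounds on the iterates by testing with $\Delta^2 Q_{m+1}$, $\Delta\p_t Q_{m+1}$ and $-\Delta v_{m+1}$ and exploiting the Lie-bracket cancellation you identify, and then shows a contraction in the weaker norm $H^1_{Q_e}\times L^2$, which yields convergence of the full sequence and uniqueness in one stroke. The pointwise bound $\|Q_{m+1}(t)\|_{L^\infty}\le 2K$ is propagated along the iteration via the Gagliardo--Nirenberg inequality $\|Q_{m+1}(s)-Q_0\|_{L^\infty}\le C\|Q_{m+1}(s)-Q_0\|_{L^2}^{1/4}\|\D^2(Q_{m+1}(s)-Q_0)\|_{L^2}^{3/4}$, exactly as your remark about $H^2\hookrightarrow L^\infty$ anticipates.
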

\noindent
 Theorem \ref{thm loc} may be known for some experts.  For example, in the case  of  $L_2=L_3=L_4=0$, Theorem \ref{thm loc}  was proved in
 \cite{PZ12}.
However, since there exists some new difficulties on $f_E$ with $L_4\neq 0$, we provide a detailed proof of Theorem \ref{thm loc} in Section 5.

\medskip

Next, we formulate the   Beris-Edwards system for uniaxial Q-tensors.
In  their book \cite{BE94},   Beris and Edwards  suggested a  hydrodynamic theory to describe   flows of liquid
crystals  for uniaxial Q-tensors $Q\in S_*$, but they could not write an explicit  form of molecular field $H(Q,\D Q)$ for $Q\in S_*$
with non-zero elastic constants $L_2$, $L_3$, $L_4$. Recently, the  explicit  form of the molecular field $H(Q,\D Q)$ for $Q\in S_*$  with
general elastic constants  was given in  \cite{FH}, so we can apply the  form to formulate the Beris-Edwards system for uniaxial Q-tensors.
Then,  for a uniaxial tensor $Q\in S_* $,  the molecular field    is given by
\begin{align*}
&H(Q, \D Q)
:=\frac{1}2[\nabla_k \p_{p^k} f_E-\nabla_k \p_{p^k} f_E +(\nabla_k \p_{p^k} f_E-\nabla_k \p_{p^k} f_E) ^T](s_+^{-1}Q+\frac13I)
\\
&+\frac{1}2(s_+^{-1}Q+\frac13I)[\nabla_k \p_{p^k} f_E-\nabla_k \p_{p^k} f_E +(\nabla_k \p_{p^k} f_E-\nabla_k \p_{p^k} f_E) ^T]
\\
&-(s_+^{-1}Q+\frac13I)[\nabla_k \p_{p^k} f_E-\nabla_k \p_{p^k} f_E +(\nabla_k \p_{p^k} f_E-\nabla_k \p_{p^k} f_E) ^T](s_+^{-1}Q+\frac13I)
	.\alabel{EL}
\end{align*}
Using the new  molecular field \eqref{EL}, the uniaxial Beris-Edwards system with non-zero elastic constants
$L_1,\cdots, L_4$ is:
\begin{align}
	(\p_t+v\cdot\D- \Delta )v +\D P=&\D\cdot\Big([Q,H]+\sigma(Q,\D Q)\Big),\label{BE1}\\
	\D\cdot v=&0,\label{BE2}\\
	(\p_t +v\cdot \D )Q+[Q, \Omega]=& H(Q,\D Q) \label{BE3}.
\end{align}

Secondly,  we prove   existence of the strong solution to \eqref{BE1}-\eqref{BE3}  in the following:
\begin{theorem} \label{thm 2}
	Assume that $(Q_0,v_0)\in H^2_{Q_e}(\R^3;S_*)\times H^{1}(\R^3;\R^3)$ and $\div v_0=0$. Then there is a unique strong
	solution $(Q, v)$ to the  uniaxial Beris-Edwards system \eqref{BE1}-\eqref{BE3}  in $\R^3\times[0, T^*)$ with initial data $(Q_0,v_0)$.
	Moreover, there are two uniform positive constants $\varepsilon_0$ and $R_0$, independent of the solution $(Q, v)$, such that at a singular point $x_i$, the maximal existence time $T^*$
satisfies
	\begin{equation*}
		\limsup_{t\rightarrow T^*}\int_{B_R(x_i)}|\D Q(\cdot,t)|^3+|v(\cdot,t)|^3\,dx\geq \varepsilon_0^3
	\end{equation*}
	for any $R>0$ with $R\leq R_0$.
\end{theorem}

For the proof of Theorem \ref{thm 2}, one of the key steps is to establish    Proposition \ref{prop Extension} and obtain that for a short time $T_1>0$,  the strong  solution to the system \eqref{RBE1}-\eqref{RBE3}  with  initial data $(Q_0, v_0)$ satisfies the uniform estimate:
\begin{align*}
	&\sup_{0\leq s\leq T_1}\left(\|\D Q_L(s)\|_{H^1(\R^3)}^2+\|v_L(s)\|_{H^1(\R^3)}^2+\frac 1 L\| Q_L(s)-\pi(Q_L(s))\|_{H^1(\R^3)}^2\right)
	\\
	&+\|\D^2 Q_L\|_{L^2(0,T_1;H^1(\R^3))}^2+\|\p_t Q_L\|_{L^2(0,T_1;H^1(\R^3))}^2
	\\
	&+\|\D v_L\|_{L^2(0,T_1;H^1(\R^3))}^2 +\frac 1 L\|\D( Q_L-\pi(Q_L))\|_{H^1(\R^3)}^2\leq C.
\end{align*}
Here  $\pi(Q_L)$ is the projection of $Q_L$ defined below  in the proof of Theorem \ref{thm 3}.
The proof of Proposition \ref{prop Extension} is  sophisticated and it will also play a crucial role  in the proof of Theorem \ref{thm 3} below. We will outline more details about it later.

\begin{remark}
	It was pointed out in  \cite{BE94}  that \eqref{BE1}-\eqref{BE3} can be reduced to the hydrodynamic flow of the Oseen-Franks energy, known as the Ericksen-Leslie system. In fact,    multiplying $u_j$ to \eqref{BE3} and  employing $|u|^2=1$, one can   check that \[\sigma(Q,\D Q)=-\na u^T\frac{\p W(u,\D u)}{\p (\na u)}, \quad \p_{p_{kl}^j} f_E(Q_L,\D Q_L)=s_+^{-1}u_l\frac{\p W(u,\na u)}{\p(\D_j u_k)}.\]
\end{remark}

As mentioned in  Remark 1,  the  uniaxial Beris-Edwards system \eqref{BE1}-\eqref{BE3}   is a generalization of the Ericksen-Leslie system.
Ericksen \cite {Er} and  Leslie \cite{Le} in the 1960s proposed the celebrated hydrodynamic theory to describe the behavior of liquid crystal
flows based on the Oseen-Franks theory.
The question of
global existence  of   weak solutions to the Ericksen-Leslie system is very challenging.
Similarly to the idea of Chen-Struwe \cite{CS} on    harmonic maps, Lin and Liu \cite {LL2} introduced the
Ginzburg-Landau approximation for the Ericksen-Leslie system to   solve the existence problem.
In $\R^2$,  Hong \cite{Ho} and Hong-Xin \cite {HX} proved that the solutions  of the Ginzburg-Landau approximate systems approach  the solution of the Ericksen-Leslie system in a short time by using the idea of Struwe \cite{St} on the harmonic map flow.
In $\R^3$, Hong, Li and  Xin \cite{HLX}  showed the strong  convergence  of the Ginzburg-Landau approximate system with unequal Frank
constants before the blow-up time of the Ericksen-Leslie system. In \cite{FHM},    we   proved the
smooth convergence  of the Ginzburg-Landau approximate systems for a general Ericksen-Leslie system with Leslie tensors before the blow-up
time. Furthermore, Kortum \cite{Kor} established the global existence of weak
solutions to the two-dimensional Ericksen–Leslie system by
using a concentration–cancellation method to handle the nonlinear
stress terms.

By comparing with Ginzburg-Landau models for superconductivity theory,  Gartland   \cite {Ga} emphasized  importance  of  the convergence on  Landau-de Gennes
solutions. In physics, both  the  Ericksen-Leslie
theory   and the  Beris-Edwards  theory  should   unify    in modeling uniaxial    state of nematic liquid crystals (cf. \cite[Chapter 11]{BE94}), so it  is  very
interesting to  give a rigorous mathematical proof to verify that  as $L\to 0$, the solutions of the rescaled Beris-Edwards system \eqref{RBE1}-\eqref{RBE3}
can approach a  solution of the uniaxial Beris-Edwards system  \eqref{BE1}-\eqref{BE3}.

Thirdly,   we  solve the above convergence  problem for the Beris-Edwards system in the following:

\begin{theorem}\label{thm 3} Assume that $(Q_0, v_0)\in  H_{Q_e}^{2}(\R^3;S_*)\times H^{1}(\R^3;\R^3)$ with $\div  v_{0}=0$.
	For each $L>0$, let $(Q_L, v_L)$ be the unique strong  solution to the rescaled Beris-Edwards  system \eqref{RBE1}-\eqref{RBE3} in
	$\R^3\times[0,T_L)$ with  initial data $(Q_0, v_0)$ for the maximal existence time    $T_L$.
	Let  $(Q,v)$  be the strong solution to  the uniaxial Beris-Edwards system  \eqref{BE1}-\eqref{BE3}  in  $\R^3\times[0,T^*)$   with the same initial data $(Q_0,
v_0)$ and  the maximal existence time    $T^*$  in Theorem  \ref{thm 2}. Then, for any $T\in(0, T^*)$,   there exists a sufficiently small
$L_T>0$ such that $T\leq T_L$ for any $L\leq  L_T$.
	Moreover,    as $L\to 0$, we have
	\begin{equation}\label{con1}
		(\D Q_L,v_L)\rightarrow(\D Q,v) \qquad\text{in }~~L^\infty(0,T;L^2(\R^3))\cap L^2(0,T;H^1(\R^3))
	\end{equation}
	and
	\begin{equation}\label{con2}
		(\D Q_L,v_L)\rightarrow(\D Q,v) \qquad\text{in }~~C^\infty(\tau,T;C_{loc}^\infty(\R^3)) \quad\text{for  any }\tau>0.
	\end{equation}
\end{theorem}

For the proof of Theorem \ref{thm 3}, the main ideas are to establish   uniform estimates   on  higher order derivatives of $(Q_L,v_L)$ in $L$. Using similar methods in \cite{HX},\cite{FHM},  we can  handle all terms involving $f_E(Q_L)$, but the main difficulty is to obtain the uniform estimate of the terms involving   $\frac 1Lg_B(Q_L)$ when $L\to 0$. To handle those difficult terms,  we use a concept of a projection  near $S_*$, which was first introduced on Riemannian manifolds by Schoen  and Uhlenbeck \cite{ScU}. Denote
\begin{align}\alabel{S delta}
	S_\delta:=\left \{Q\in S_0:\quad \dist(Q;S_*)\leq \delta\right\}.
\end{align}
Let $\pi: S_{\delta}\to S_*$ be the smooth projection map for a small $\delta>0$  so that $\pi (Q)$ is the nearest point; i.e. $|Q-\pi (Q)|=\dist(Q; S_*)$ for
$Q\in S_{\delta}$.
For each smooth $Q_L(x)\in S_{\delta}$,  there is a rotation $R(Q_L(x))\in SO(3)$ such that $R^T(Q_L(x))Q_L(x)R(Q_L(x))$ is diagonal. However, it was pointed out \cite {FH} that  $R(Q_L(x))$ is  smooth  in $U$ except for a singular set $\Sigma_L$.    In this paper, we find  a new approach to avoid the difficulty of the singular set $\Sigma_L$  arising from \cite {FH}.  We   outline main steps of the new approach  as follows:

The first key step is  to establish new estimates in Lemma \ref{lem f_B 1st} to overcome the difficulty arising from the term  $\frac 1Lg_B(Q_L)$. More precisely,
for each smooth  $Q\in S_\delta$ with a small $\delta$,   there exists a smooth rotation  $R (Q)\in SO(3)$  such that
\begin{align}\label{Q1}
	\tilde Q :=R^T (Q) Q    R (Q)   =\begin{pmatrix}
		\tilde Q_{11} & \tilde Q_{12}&0
		\\ \tilde Q_{21}& \tilde Q_{22}&0
		\\0&0&\tilde Q_{33}
	\end{pmatrix}
\end{align} and  in particular, for $Q=\pi(Q)\in S_{*}$, we have
\begin{align*}
 	\left . R (Q)^T  Q R (Q)\right |_{Q=\pi (Q)}   =\begin{pmatrix}
 		\frac{-s_+}{3}&0&0\\0&\frac{-s_+}{3}&0\\0&0&\frac{2s_+}{3}
 	\end{pmatrix}.
 \end{align*}
	 For any $Q\in S_\delta$ with a small $\delta$,  there is a uniform constant $\lambda >0$ such that
\begin{align*}
	&\frac{\lambda}{2} |\xi|^2\leq\p^2_{\tilde Q_{ij}\tilde Q_{kl}}  f_B(\tilde Q)\xi_{ij}\xi_{kl}
\end{align*}
for all $\xi\in S_0$ having the form of block-diagonal  matrices given by
\begin{align*}
	\xi =\begin{pmatrix}
		\xi_{11} & \xi_{12}&0
		\\ \xi_{21}& \xi_{22}&0
		\\0&0&\xi_{33}
	\end{pmatrix},\end{align*}
 which improves  a result of diagonal matrices in \cite {FH}.
Then for any $Q_L\in S_\delta$, we   derive an estimate
\begin{align*}
	&\frac{\lambda}{4} |\D (Q_L-\pi(Q_L))|^2
	\\
	\leq& \p^2_{\tilde Q_{ij}\tilde Q_{kl}}  f_B(\tilde Q_L)\D_{x_\beta}(\tilde Q_L)_{ij}\D_{x_\beta} (\tilde
Q_L)_{kl}
	+C|\D Q_L|^2|Q_L-\pi(Q_L)|^2\alabel{2f1}.
\end{align*}
As an application of \eqref{2f1},   we can handle the   term on
$\frac{1}{L} g_B(Q_L)$. More precisely,
we rotate the equation \eqref{RBE3} by $R(Q_L)\in SO(3)$ such that $g_B(\tilde Q_L)$ has  the same matrix form of $\tilde Q_L$ in \eqref{Q1}.
For any $Q_L\in S_\delta$,  we find
 in Lemma \ref{lem gb}
 \begin{align*}
		&\<\D g_B(\tilde Q_L) , R^T(Q_L) \D Q_L R(Q_L)\>
		\\
		\leq& -\frac{\lambda}{8}|\D (Q_L-\pi(Q_L))|^2+C|Q_L-\pi(Q_L)|^2|\D Q_L|^2
		.
	\end{align*}
 The second key step is  to apply the Gagliardo-Nirenberg interpolation to obtain a local $L^3$-type of estimate
\begin{align}
	\sup_{T_0\leq t\leq T_L,x_0\in\R^3}\int_{B_{R}(x_0)}|\D Q_L|^3+|v_L|^3+\frac{|Q_L-\pi(Q_L)|^3}{L^{\frac32}}\,dx\leq\varepsilon_0^3.
\end{align}
Then we establish a key Proposition \ref{prop Extension}, which  implies
   that there exists a subsequence $(Q_{L},v_{L})$ such that as $L\to 0$,
	\begin{equation*}
		(\D Q_{L},v_{L})\rightarrow(\D Q,v),\qquad\text{in }~~L^\infty(0,T_M;L_{loc}^2(\R^3))\cap L^2(0,T_M;H_{loc}^1(\R^3))
	\end{equation*}
for some $T_M>0$. Then we   prove   the  local convergence.

Another of the key steps  to prove Theorem \ref{thm 3} is to  establish the sophisticated uniform  estimate  of $(\nabla^{k+1} Q_L, \nabla^{k} v_L)$ in $L$ for any integer $k\geq 2$. We would point out that our proof on high order uniform estimates is new and  different from one  in \cite{FHM}.
The last step is to extend $T_M$
to any $T<T^*$.   For this,    we choose
$M= 2\sup_{0\leq t\leq T}\|(\D Q, v)\|_{H^1(\R^3)}^2$ in Proposition \ref{prop Extension}. Then we
combine the energy identity (see below \eqref{energy Q})  with   higher order estimates  to verify  that    $( Q_L, v_L)$ satisfies the assumption (see below \eqref{prop eq}) at $t=T_M$.  Utilizing Proposition \ref{prop Extension} with a new initial data $(Q_L,v_L)$ at $t=T_M$, we  extend the strong solution $(Q_L,v_L)$ to the
time  $T_1=:\min\{T,2T_M\}$, $T<T^*$. Then the solutions $( Q_L, v_L)$ to the rescaled Beris-Edwards  system \eqref{RBE1}-\eqref{RBE3} converge smoothly to the solution $(Q,v)$  in $\R^3\times (0, 2T_M]$ for sufficiently small $\varepsilon$. Repeating  above steps, we  establish    \eqref{con2} for any $T<T^*$ in Theorem \ref{thm 3}.

\

Finally, we would like to make two remarks.

\begin{remark}
	When $L_4=0$,  Wang, Zhang and Zhang \cite{WZZ} proved some related convergence of \eqref{RBE1}-\eqref{RBE3}  with smooth initial values
	to the Ericksen-Leslie system in $\R^3$, but   not   to
	the uniaxial Q-tensor  Beris-Edwards system   \eqref{BE1}-\eqref{BE3}.  It seems that their method only works for smooth initial values.
	Recently,  Xin and Zhang \cite{XZ} proved that  the weak convergence also holds in $\R^2$  for \eqref{RBE1}-\eqref{RBE3} with $L_2=L_3=L_4=0$.
\end{remark}

\begin{remark}  For the case $\xi \neq 0$ in \eqref{Ixi}, additional terms appear in the stress tensor
	$\sigma_{ij}(Q,\nabla Q)$ in (1.8) and in $S(Q,v)$ in (1.10).
	As observed in \cite[ p.~2013]{PZ11}, these terms cancel in the
	energy estimates and thus do not cause    trouble at the level of strong
	solutions, so   our Theorem  \ref{thm 2} can be generalized to the case of $\xi\neq 0$, but it needs  some details. However, it is unclear   whether Theorem \ref{thm 3}  holds or not for $\xi\neq 0$ since it involves a large amount of calculations due to extra terms with $\xi\neq 0$.  Therefore,  we will study the case of $\xi\neq 0$  in  future works.
\end{remark}

The  paper is organized as follows.  In Section 2, we derive  some a-priori estimates on the strong solution $(Q_L,v_L)$ of the system
\eqref{RBE1}-\eqref{RBE3} in
$\R^3\times[0,T_L]$.  In Section 3, we prove Theorem \ref{thm 2}. In Section 4, we prove   Theorem  \ref{thm 3}.  In Section 5, we prove
Theorem
\ref{thm loc}.

\section{A-priori estimates}
In this section, we derive some a-priori estimates on the strong solution $(Q_L,v_L)$ of the system \eqref{RBE1}-\eqref{RBE3} in
$\R^3\times[0,T_L]$.
\subsection{Properties of the density}
In order to obtain a-priori energy estimates, we need to establish some key properties of the energy density.
Under the condition \eqref{L cond}, one can verify using a result in  \cite{KRSZ}  that there are two uniform constants $\alpha>0$ and $\Lambda
>0$ such that for any  $Q\in \mathbb{M}^{3\times 3}$ and   $p \in\mathbb{M}^{3\times 3}\times \R^3$, $f_E(Q,p)$ also satisfies
	\begin{align*}
		\frac \alpha 2 |p|^2\leq  f_E(Q,p)\leq&  \Lambda(1+|Q|^2)|p|^2, \quad|\p_Qf_E(Q,p)|\leq  \Lambda (1+|Q|)|p|^2,
		\\
		|\p^2_{Qp}f_E(Q,p)|\leq&  \Lambda (1+|Q|)|p|,\qquad |\p^2_{pp}f_E(Q,p)|\leq  \Lambda(1+|Q|^2).\alabel{sec2 f_1}
	\end{align*}
Noting that $f_E(Q,p)$ is
quadratic in $p$ and satisfies \eqref{sec2 f_1}, one   has  (cf. \cite{HM})
	\begin{align}\label{sec2 f_E}
	 \frac \alpha 2|\xi|^2\leq	\p^2_{p^i_{kl}p^j_{mn}} f_{E}(Q,p)\xi^i_{kl}\xi^j_{mn}\leq  \Lambda (1+|Q|^2)|\xi|^2,  \quad \forall
\xi\in\mathbb{M}^{3\times 3}\times \R^3.
	\end{align}
Recall that
\begin{align}
	S_\delta=\left \{Q\in S_0:\quad \dist(Q;S_*)\leq \delta\right\}.
\end{align}
We assume that $\delta>0$ is sufficiently small throughout this paper.  Let $\pi(Q )$ be a smooth projection from $S_{\delta}$ to $S_*$.
Then $ f_B(Q )$ satisfies  (cf. \cite{NZ}, \cite{FH})
\begin{align}
	\frac{\lambda}2|Q -\pi(Q )|^2\leq  f_B&(Q)\leq  C |Q -\pi(Q )|^2
	\label{f_B and dist}
\end{align} for some $C>0$.

Note that the principal eigenvalue and eigenvector of  $Q\in S_\delta$ are smoothly close to those of its projection $\pi(Q)\in S_*$. Then for each $ Q\in S_\delta$,  there exists a smooth rotation $R(Q)\in SO(3)$ such that  $R^T(Q)QR(Q)$ is block-diagonal; i.e.
  for any $Q\in S_\delta$,  we have
\begin{align*}
	\tilde Q =R^T(Q)   Q   R(Q)   =\begin{pmatrix}
		\tilde Q_{11} & \tilde Q_{12}&0
		\\ \tilde Q_{21}& \tilde Q_{22}&0
		\\0&0&\tilde Q_{33}\alabel{Q13}
	\end{pmatrix}.
\end{align*}Since
every $\pi(Q)\in S_*$ has a constant number of distinct eigenvalues,  there is another smooth rotation $R(Q)\in SO(3)$   such that   $R^T(\pi (Q)
)\pi(Q)R(\pi(Q))$ is diagonal  (cf. \cite{No})  and   $R^T(Q)QR(Q)$  for any $Q\in S_\delta$ is still block-diagonal of the form \eqref{Q13}.
Since  $S_*$ only have three diagonal  matrices, we can assume
without loss of generality that
 \begin{align}
 	R^T(\pi(Q)) \pi (Q  )R( \pi(Q))   =\begin{pmatrix}
 		\frac{-s_+}{3}&0&0\\0&\frac{-s_+}{3}&0\\0&0&\frac{2s_+}{3}
 	\end{pmatrix}=:Q^+
 	\label{Q+}
 \end{align}
It can be checked that $Q^+\tilde Q=\tilde Q Q^+$.

\begin{lemma}
\label{lem f_B 1st}
 For any $Q\in S_\delta$ , let $\tilde Q$ be defined in \eqref{Q13}. Then for a  sufficiently small $\delta >0$, the Hessian of the bulk
density $f_B(Q)$ in $S_\delta$ with satisfies
\begin{align*}
	&\frac{\lambda}{2} |\xi|^2\leq\p^2_{\tilde Q_{ij}\tilde Q_{kl}}  f_B(\tilde Q)\xi_{ij}\xi_{kl}\alabel{eq pos}
\end{align*}
for all $\xi\in S_0$ of the form
\begin{align*}
	\xi =\begin{pmatrix}
		\xi_{11} & \xi_{12}&0
		\\ \xi_{21}& \xi_{22}&0
		\\0&0&\xi_{33}
	\end{pmatrix},\alabel{xi}\end{align*}
where $\lambda=\min\{{s_+b},{3a}\}>0$.

\end{lemma}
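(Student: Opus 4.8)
The plan is to compute the Hessian of $f_B$ at the block-diagonal matrix $\tilde Q$ of the form \eqref{Q13} directly, in terms of the explicit quartic formula \eqref{BE}, and to exploit the fact that $\tilde Q$ is a small perturbation of the diagonal matrix $Q^+$. First I would write out $f_B(\tilde Q) = \tilde f_B(\tilde Q) - \min_{S_0}\tilde f_B$, where $\tilde f_B(\tilde Q) = -\tfrac a2 \tr(\tilde Q^2) - \tfrac b3 \tr(\tilde Q^3) + \tfrac c4 [\tr(\tilde Q^2)]^2$. Because we only test the Hessian against variations $\xi$ of the restricted form \eqref{xi} (i.e. $\xi_{13}=\xi_{23}=0$ and $\xi$ symmetric traceless), I would parametrise the admissible variations and compute $\tfrac{d^2}{dt^2}\big|_{t=0} f_B(\tilde Q + t\xi)$. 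Since $\tr(\cdot)$, $\tr((\cdot)^2)$, etc. are polynomial, this second derivative is a quadratic form in $\xi$ whose coefficients are polynomials in the entries of $\tilde Q$.

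The key simplification is that when $\tilde Q = Q^+$ exactly, the $3\times 3$ block structure decouples: the $2\times 2$ upper block of $Q^+$ is $-\tfrac{s_+}{3} I_2$ (a multiple of the identity), and the $(3,3)$ entry is $\tfrac{2s_+}{3}$. At a uniaxial critical point $Q^+$ the bulk potential $\tilde f_B$ has a known degeneracy structure — its Hessian annihilates exactly the directions tangent to $S_*$ (rotations of $Q^+$, i.e. the off-diagonal $\xi_{13},\xi_{23}$ directions) and is strictly positive on the orthogonal complement, which for our restricted class is precisely the space spanned by $\xi_{12}$ and by the traceless diagonal part. So I would compute $\p^2_{\tilde Q_{ij}\tilde Q_{kl}} f_B(Q^+)\,\xi_{ij}\xi_{kl}$ explicitly for $\xi$ of the form \eqref{xi}: using $\tr((Q^+)^2) = \tfrac{2}{3}s_+^2$ and the relation $s_+$ satisfies (the critical-point equation $c\,s_+^2 - \tfrac b3 s_+ - a = 0$, equivalently $2 c s_+^2 - b s_+ = 3a + \ldots$, coming from $s_+ = \tfrac{b+\sqrt{b^2+24ac}}{4c}$), one should find the quadratic form diagonalises with eigenvalue $s_+ b$ on the $\xi_{12}$-type directions (the biaxial directions within the block) and eigenvalue $3a$ on the traceless-diagonal direction, giving the lower bound $\min\{s_+ b, 3a\} = \lambda$. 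That establishes \eqref{eq pos} when $\tilde Q = Q^+$, with the constant $\tfrac{\lambda}{2}$ actually improvable to $\lambda$, but one keeps $\tfrac{\lambda}{2}$ to leave room for the perturbation.

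Then, for general $\tilde Q \in S_\delta$ of the form \eqref{Q13}, I would write $\tilde Q = Q^+ + (\tilde Q - \pi(\tilde Q)$-type error$)$; more precisely $|\tilde Q - Q^+| = |R_Q^T(Q - \pi(Q))R_Q| = \dist(Q; S_*) \le \delta$. Since the Hessian coefficients are polynomials (hence Lipschitz on bounded sets), $|\p^2 f_B(\tilde Q) - \p^2 f_B(Q^+)| \le C\delta$, so the quadratic form at $\tilde Q$ differs from that at $Q^+$ by at most $C\delta |\xi|^2$; choosing $\delta$ small enough that $C\delta \le \tfrac{\lambda}{2}$ yields \eqref{eq pos}. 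The main obstacle — and the only place real care is needed — is the explicit eigenvalue computation at $Q^+$: one must correctly use the defining relation for $s_+$ to see that the two eigenvalues come out exactly as $s_+ b$ and $3a$ (and in particular that the would-be negative or zero contributions are confined to the $\xi_{13},\xi_{23}$ directions that \eqref{xi} excludes), and one must double-check the tracelessness constraint is handled consistently so that the diagonal variation is one-dimensional. Everything else is routine polynomial algebra and a continuity/Lipschitz perturbation argument.
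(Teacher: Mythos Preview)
Your approach is essentially the paper's: compute the Hessian of $f_B$ explicitly at $\tilde Q = Q^+$ using the relation $2cs_+^2 = 3a + bs_+$, verify positivity on the restricted $\xi$-space of \eqref{xi}, and then perturb by continuity of the second derivatives to cover $|\tilde Q - Q^+| = \dist(Q;S_*)\le\delta$. One small correction to your eigenvalue sketch: the traceless diagonal variation is two-dimensional, not one; the paper's explicit computation at $Q^+$ gives $s_+b(\xi_{11}^2+\xi_{22}^2+\xi_{12}^2+\xi_{21}^2) + 3a\,\xi_{33}^2$, so $\xi_{11}$ and $\xi_{22}$ each carry the coefficient $s_+b$ while only $\xi_{33}$ carries $3a$, and the bound $\lambda|\xi|^2$ follows termwise.
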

\begin{proof}
Calculating   second partial derivatives of $f_B(\tilde Q)$ with respect to $\tilde Q$, we have
\begin{align*}
	\p_{\tilde Q_{\tilde i\tilde j}}\p_{\tilde Q_{ij}} f_B(\tilde Q)=& -a\delta_{i\tilde i}\delta_{j\tilde j}
	-b(\delta_{\tilde ij}\tilde Q_{\tilde j i}+\delta_{\tilde ji}\tilde Q_{j\tilde i})+c(\delta_{i\tilde i}\delta_{j\tilde j}|\tilde
	Q|^2+2\tilde Q_{ij}\tilde Q_{\tilde i\tilde j}).
\end{align*}
Note  from \cite{MN} that
\begin{align*}
	|Q^+|^2=\frac23s_+^2,\qquad 2cs_+^2=3a+bs_+.\alabel{abc}
\end{align*}
Then we have
\begin{align*}
		\p_{\tilde Q_{\tilde i\tilde j}}\p_{\tilde Q_{ij}} f_B(\tilde Q)\Big|_{\tilde Q =Q^+}=&\(b\Big(\frac13s_+\delta_{i\tilde
i}\delta_{j\tilde j}-(\delta_{\tilde ij}\tilde Q_{\tilde j i}+\delta_{\tilde ji}\tilde Q_{j\tilde i})\Big)+2c\tilde Q_{ij}\tilde Q_{\tilde
i\tilde j}\)\Big|_{\tilde Q =Q^+}
	.\alabel{fB 2}
\end{align*}
In the case of  $i=j=\tilde i=\tilde j$ in \eqref{fB 2}, we apply the relation \eqref{abc} to obtain
\begin{align}\label{f_B p1.0}
	\p_{\tilde Q_{11}}\p_{\tilde Q_{11}} f_B(Q^+)  =&\(s_+b+\frac{2s_+^2}{9}c\)=\frac 13a+\frac{10s_+}{9}b=\p_{\tilde
		Q_{22}}\p_{\tilde Q_{22}} f_B(Q^+),
	\\
	\p_{\tilde Q_{33}}\p_{\tilde Q_{33}} f_B(Q^+)=&-s_+ b+\frac{8s_+^2}{9}c=\frac43a-\frac{5s_+}{9}b.
\end{align}
For the case of $i=j\neq\tilde i=\tilde j$ in \eqref{fB 2}, we compute
\begin{align}
	\p_{\tilde Q_{11}}\p_{\tilde Q_{22}} f_B(Q^+) =&2cQ^+_{11}Q^+_{22}=\frac{2s_+^2}{9}c=\frac13a+\frac{s_+}{9}b,
	\\
	\p_{\tilde Q_{11}}\p_{\tilde Q_{33}} f_B(Q^+) =&2cQ^+_{11}Q^+_{33}=-\(\frac23a+\frac{2s_+}{9}b\)=\p_{\tilde Q_{22}}\p_{\tilde
		Q_{33}} f_B(Q^+).\label{f_B p1.1}
\end{align}
For the remaining case  of either $i\neq j$ or $ \tilde i\neq \tilde j$ in \eqref{fB 2}, we find
\begin{align*}
	&\left.\(\sum_{i\neq j}\sum_{\tilde i,\tilde j}+\sum_ {\tilde i\neq \tilde j}\sum_{i,j}\)\p_{\tilde Q_{\tilde i\tilde j}}\p_{\tilde
Q_{ij}}f_B(\tilde Q)\xi_{\tilde i\tilde j}\xi_{ij}\right|_{\tilde Q=Q^+}
	\\=& \(\sum_{i\neq j}\sum_{\tilde i,\tilde j}+\sum_ {\tilde i\neq \tilde j}\sum_{i,j}\)\(2cQ^+_{ij}Q^+_{\tilde i\tilde j}+b(\frac
13s_+\delta_{i\tilde i}\delta_{j\tilde j}-\delta_{\tilde ij}Q^+_{\tilde j i}-\delta_{\tilde ji}Q^+_{j\tilde i})\)\xi_{\tilde i\tilde
j}\xi_{ij}
	\\
	=&\sum_{i\neq j}b\(\frac 13s_+-Q^+_{ii}-  Q^+_{jj}\)\xi_{ij}^2=s_+b(\xi_{12}^2+\xi_{21}^2),\alabel{Second derivative case 3}
\end{align*}
where we employed \eqref{abc} in the last step.

Using   \eqref{f_B p1.0}-\eqref{Second derivative case 3} with the fact that $\tr(\xi)=0$ for $\xi\in S_0$ defined in  \eqref{xi}, we have
\begin{align*}
	&\p^2_{\tilde Q_{ij}\tilde Q_{kl}}  f_B(Q^+)\xi_{ij}\xi_{kl}
	\\=& \(\frac 13a+\frac{10s_+}{9}b\)(  \xi_{11}^2 +  \xi_{22}^2)
	+\(\frac23a+\frac{2s_+}{9}b\)(
	\xi_{11}    \xi_{22})+2s_+b\xi_{12}^2
	\\
	&+\(\frac43a-\frac{5s_+}{9}b \) \xi_{33}^2 -\(\frac43a+\frac{4s_+}{9}b\)   \xi_{33}(
	\xi_{11}+  \xi_{22})
	\\
	=& s_+b(  \xi_{11}^2 +  \xi_{22}^2)+({\frac 83}a-\frac{s_+}{9}b)  \xi_{33}^2+2s_+b\xi_{12}^2
+\(\frac13a+\frac{s_+}{9}b\)(  \xi_{11}+  \xi_{22})^2
	\\
	=& s_+b(  \xi_{11}^2 +  \xi_{22}^2+\xi_{12}^2+\xi_{21}^2)+3a  \xi_{33}^2 \geq \lambda |\xi|^2,
	\alabel{f_B p1.2}
\end{align*}
where $\lambda=\min\{{s_+b},{3a}\}>0$.

Due to the continuity of  second derivatives of $f_B(\tilde Q)$ and the fact that $|\tilde Q-Q^+|\leq |Q-\pi(Q)|+2|\pi(Q)||R(Q)-R(\pi(Q))|\leq
C\delta,$ the claim \eqref{eq pos} follows from using \eqref{f_B p1.2} with sufficiently small $\delta$.
\end{proof}

\begin{cor}\label{lem fB}
	For any $Q \in S_\delta$, let $\tilde Q$ be defined in \eqref{Q13}. We have
 \begin{align*}
 		\frac{\lambda}{4}|\D (Q-\pi(Q))|^2\leq &\p^2_{\tilde Q_{ij}  \tilde Q_{kl}}  f_B(\tilde Q)\D  \tilde Q _{ij}\D\tilde Q_{kl}
 		 +C|Q-\pi(Q)|^2|\D Q|^2 .
 		\alabel{D2 tilde Q}
 	\end{align*}
 Moreover, for any  $k\geq 2$,  we have
 	\begin{align*}
 		\frac\lambda 4 |\D^k (Q -\pi(Q ))|^2
 		&\leq  \p^2_{ \tilde Q_{ij}  \tilde Q_{kl}}  f_B(\tilde Q )\D^k \tilde Q _{ij}\D^k\tilde Q_{kl}
 		\\
 		&+C \sum_{\substack{\mu_1\leq k-1\\\mu_1+\cdots+\mu_{k+1}=k}}|\D^{\mu_1}(Q-\pi(Q))|^2|\D^{\mu_2} Q|^2\cdots |\D^{\mu_{k+1}} Q|^2
 		.
 		\alabel{f_B kth rule}
 	\end{align*}
\end{cor}
\begin{proof}
Recall $\tilde Q, Q^+ $ from \eqref{Q13}-\eqref{Q+}. Then  we have
\begin{align*}
&\D  \tilde Q =\D[R^T(Q) QR(Q)-Q^+]
\\
=&\D(R^T(Q) [Q-\pi(Q)]R(Q))+\D[R^T(Q)\pi(Q)R(Q)-R^T(\pi(Q))\pi(Q)R(\pi(Q))]
\\
=&R^T(Q) \D[Q-\pi(Q)]R(Q)+ \D R^T(Q) [Q-\pi(Q)]R(Q)+R^T(Q)[Q-\pi(Q)]\D R(Q)
\\
&+R^T(Q)\D\pi(Q) [R(Q)-R(\pi(Q))]+[R^T(Q)-R^T(\pi(Q))]\D\pi(Q) R(\pi(Q))
\\
&+\D R^T(Q)\pi(Q)[R(Q)-R(\pi(Q))]+[R^T(Q)-R^T(\pi(Q))]\pi(Q)\D R(\pi(Q))
\\
&+\D [R^T(Q)-R^T(\pi(Q))]\pi(Q) R(\pi(Q))+R^T(Q)\pi(Q)\D[R(Q)-R(\pi(Q))].\alabel{A2}
\end{align*}
Note that
\begin{align*}
|\D[R(Q)-R(\pi(Q))]|=|\p_{Q_{ij}}[R(Q)-R(\pi(Q))]\D Q_{ij}|\leq C|Q-\pi(Q)||\D Q|
.\alabel{eq: R4}
\end{align*}
Using Young's inequality in \eqref{A2} with \eqref{eq: R4} and Lemma \ref{lem f_B 1st}, we get \eqref{D2 tilde Q}.

To establish \eqref{f_B kth rule} with $k\geq 2$, we note that
\begin{align*}
	&|\D^k R|+|\D^k \pi(Q)|
	\leq C\(\sum_{\mu_1+\cdots+\mu_{k}=k}|\D^{\mu_1} Q|\cdots |\D^{\mu_{k}} Q| \)
	,\alabel{kth R}
	\end{align*}
and
\begin{align*}
&|\D^k[R(Q)-R(\pi(Q))]|
\\
\leq&C|\p_{Q_{ij}}[R(Q)-R(\pi(Q))]\D^k Q_{ij}|+C\sum_{l=1}^{k-1}|\D^{k-l}(\p_{Q_{ij}}[R(Q)-R(\pi(Q))])||\D^lQ_{ij}|
\\
\leq&C\sum_{\mu_1+\cdots+\mu_k=k}|Q-\pi(Q)||\D^{\mu_1} Q|\cdots |\D^{\mu_{k}} Q|
.\alabel{eq: R6}
\end{align*}
It follows from Young's inequality for $|\D^k \tilde Q|$ and \eqref{A2} that
	\begin{align*}
&\frac{\lambda}{4}|\D^k (Q-\pi(Q))|^2
\\
\leq& \frac{\lambda}{2}|\D^k \tilde Q|^2+C(\lambda)\sum_{\mu_1+\cdots+\mu_3=k-1}|\D^{\mu_3}\D R(Q)|^2 |\D^{\mu_2}(Q-\pi(Q))|^2|\D^{\mu_1} R|^2
\\
&+C(\lambda)\sum_{\mu_1+\cdots+\mu_3=k}|\D^{\mu_1}[R(Q)-R(\pi(Q))]|^2|\D^{\mu_2}\pi(Q)|^2|\D^{\mu_3} R|^2
		.\alabel{kth fQQ}
	\end{align*}
Applying Lemma \ref{lem f_B 1st}  to \eqref{kth fQQ} with $\xi = \D^k \tilde Q$ for $k\geq 2$, using \eqref{kth R}-\eqref{eq: R6}, we
prove \eqref{f_B kth rule}.
\end{proof}

For two  matrices $A$ and $B$ in $S_0$, we denote
\[\<A,B\>:=\sum_{i,j}A_{ij}B_{ij}.\]

\begin{lemma}\label{lem gb} For any $Q\in S_\delta$, let $\tilde Q$ be defined in \eqref{Q13}. Recall that
\begin{align*}
	g_B(\tilde Q)=&a\tilde Q +b \big( \tilde Q  \tilde Q -\frac {1}3\tr(\tilde Q^2)I \big)-c\tilde Q\tr(\tilde Q ^2).
\end{align*}Then we have
 \begin{align*}
		\<\D g_B(\tilde Q) , (R^T(Q) \D Q R(Q))\>
		\leq -\frac{\lambda}{8}|\D (Q-\pi(Q))|^2+C|Q-\pi(Q)|^2|\D Q|^2
		.\alabel{gB pi}
	\end{align*}
Moreover, for any  $k\geq 2$,  we have
	\begin{align*}
		&\int_{\R^3}\<\D^k g_B(\tilde Q) ,\D^{k-1} (R^T(Q) \D Q R(Q))\>\,dx
		\\
\leq&-\frac{\lambda}{8}\int_{\R^3}|\D^k (Q-\pi(Q))|^2\,dx
\\
&+C\int_{\R^3}\sum_{\substack{\mu_1+\cdots+\mu_{k+1}=k\\\mu_1< k}}|\D^{\mu_1}(Q-\pi(Q))|^2|\D^{\mu_2} Q|^2\cdots |\D^{\mu_{k+1}} Q|^2\,dx
.\alabel{gB pi k}
	\end{align*}
\end{lemma}
\begin{proof}
It follows from the definition of $\tilde Q$ in \eqref{Q13} that
\begin{align*}
&R^T(Q) \D Q R(Q)=\D \tilde Q -\D R^T(Q)  Q R(Q)-R^T(Q)  Q \D R(Q)
\\
=&\D \tilde Q - A -\D R^T(\pi(Q))\pi(Q) R(\pi(Q))-R^T(\pi(Q))\pi(Q) \D R(\pi(Q)),\alabel{eq: R0}
\end{align*}
where $A$ is given by
	\begin{align*}
		A:=&\D  R^T(Q) [Q-\pi(Q)]R(Q)+R^T(Q)[Q-\pi(Q)]\D  R(Q)
		\\
		&+\D  [R^T(Q)-R^T(\pi(Q))]\pi(Q) R(Q)+R^T(Q)\pi(Q)\D [R(Q)-R(\pi(Q))]
		\\
		&+\D  R^T(\pi(Q))\pi(Q)[R(Q)-R(\pi(Q))]+[R^T(Q)-R^T(\pi(Q))]\pi(Q)\D  R(\pi(Q)).
	\end{align*}
Note that each term of $A$ contains the factor $(Q-\pi(Q))$. Let $\xi$  be a block-diagonal matrix  defined in \eqref{xi}. Then we have
	\begin{align*}
	&\<\xi,\D[R^T(\pi(Q))] \pi(Q) R(\pi(Q))+ R^T(\pi(Q)) \pi(Q) \D[R(\pi(Q))]\>
	\\
	=&\<\xi,\D[R^T(\pi(Q))] R(\pi(Q)) Q^++Q^+R^T(\pi(Q))\D[R(\pi(Q))]\>
	\\
	=&\sum_{i,j,k =1}^3\xi_{ij}\(\D[R^T(\pi(Q))] R(\pi(Q))\)_{ik} Q^+_{kj}+\xi_{ij}Q^+_{ik}\(R^T(\pi(Q))\D[R(\pi(Q))]\)_{kj}
	\\
	=&\sum_{i,j =1}^2\xi_{ij}Q^+_{ii}\Big(\D[R^T(\pi(Q))] R(\pi(Q))+R^T(\pi(Q))\D[R(\pi(Q))]\Big)_{ij}
	\\
	&+\xi_{33}Q^+_{33}\Big(\D[R^T(\pi(Q))] R(\pi(Q))+R^T(\pi(Q))\D[R(\pi(Q))]\Big)_{33}=0,
	\alabel{eq: R}
	\end{align*}
where we used that $Q^+_{11} = Q^+_{22}$ and $\big(\D R^TR+R^T\D R\big)_{ij}=0$ for each $i,j=1,2,3$. Note that $(R^T(Q) \D Q R(Q))$ is trace
free. It follows from \eqref{eq: R4}, \eqref{eq: R0} and \eqref{eq: R} with $\xi=\D(\p_{\tilde Q }f_B(\tilde Q))$ that
\begin{align*}
&\<\D g_B(\tilde Q) , (R^T(Q) \D Q R(Q))\>
=-\<\D(\p_{\tilde Q }f_B(\tilde Q)+\frac{b}{3}\tr(\tilde Q^2)I),(R^T(Q) \D Q R(Q))\>
\\
\leq&-\p^2_{\tilde Q_{ij}  \tilde Q_{kl}}  f_B(\tilde Q)\D  \tilde Q _{ij}\D\tilde Q_{kl}
+C|\D(\p_{\tilde Q }f_B(\tilde Q))||Q-\pi||\D Q|.\alabel{gB 0}
\end{align*}
It follows from \eqref{A2} and \eqref{eq: R4} that
\begin{align*}
|\D(\p_{\tilde Q }f_B(\tilde Q))|\leq C|\D \tilde Q|\leq&C|\D (Q-\pi(Q))|+C|\D Q||Q-\pi(Q)|
.\alabel{eq: R5}
\end{align*}
Using \eqref{gB 0}-\eqref{eq: R5}, Young's inequality and Corollary \ref{lem fB}, we obtain \eqref{gB pi}.

For the case of $k\geq 2$ in \eqref{gB pi k},  we obtain from \eqref{eq: R0} and integration by parts that
\begin{align*}
		&\int_{\R^3}\<\D^k g_B(\tilde Q) ,\D^{k-1} (R^T(Q) \D Q R(Q))\>\,dx
		\\
		=&(-1)^{k-1}\int_{\R^3}\<\D^{k-1}\D^k g_B(\tilde Q) ,(R^T(Q) \D Q R(Q))\>\,dx
		\\
		=&-\int_{\R^3}\<\D^k \p_{\tilde Q}f_B(\tilde Q) ,\D^{k-1}(\D \tilde Q-A)\>\,dx
		.\alabel{eq gb k}
		\end{align*}
		Here we used \eqref{eq: R} with $\xi = \D^{k-1}\D^k \p_{\tilde Q}f_B(\tilde Q)$.
In view of \eqref{kth R}-\eqref{eq: R6}, we also have that
\begin{align*}
|\D^k \tilde Q|^2\leq& C\sum_{\mu_1+\cdots+\mu_{k+1}=k}|\D^{\mu_1}(Q-\pi(Q))|^2|\D^{\mu_2} Q|^2\cdots |\D^{\mu_{k+1}} Q|^2\alabel{k Q}
\\
|\D^{k-1}A|^2\leq& C\sum_{\mu_1+\cdots+\mu_{k+1}=k-1}|\D^{\mu_1}(Q-\pi(Q))|^2|\D^{\mu_2}(\D Q)|^2\cdots |\D^{\mu_{k+1}} Q|^2\alabel{k A}.
\end{align*}
Using \eqref{eq gb k}-\eqref{k A}, Young's inequality and Corollary \ref{lem fB},  we obtain \eqref{gB pi k}.
\end{proof}

\subsection{Some a-priori estimates}For simplicity, we denote $f_E(Q_L,\D Q_L)$ by $f_E$ and omit the subscript $L$ in  all proofs in this section.
\begin{lemma}\label{Lie}
	Let $F$ be a $3\times 3$ matrix. For any symmetric $A,B$ matrices, we have
	\begin{align*}
		\<[A, F],B\> =\<F, [A,B]\>=-\<F^T, [A,B]\>
		.\alabel{energy p2.1}
	\end{align*}
\end{lemma}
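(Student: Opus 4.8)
The statement to prove is Lemma~\ref{Lie}: for a $3\times 3$ matrix $F$ and symmetric matrices $A,B$,
\[
\<[A,F],B\> = \<F,[A,B]\> = -\<F^T,[A,B]\>.
\]

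The plan is to expand everything in terms of the Frobenius inner product $\<X,Y\> = \tr(X^TY) = \sum_{i,j}X_{ij}Y_{ij}$ and use cyclicity of the trace together with the symmetry $A^T=A$, $B^T=B$. First I would write $\<[A,F],B\> = \tr((AF-FA)^TB) = \tr((F^TA - AF^T)B)$, using $A^T=A$. Then expand: $\tr(F^TAB) - \tr(AF^TB) = \tr(F^TAB) - \tr(F^TBA)$ after a cyclic permutation in the second term; this equals $\tr\big(F^T(AB-BA)\big) = \tr(F^T[A,B])$. Now observe $\tr(F^T[A,B]) = \<F,[A,B]\>$ directly from the definition. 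That gives the first equality.

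For the second equality, $-\<F^T,[A,B]\> = -\tr\big((F^T)^T[A,B]\big) = -\tr(F[A,B])$, so it suffices to check $\tr(F^T[A,B]) = -\tr(F[A,B])$, i.e.\ $\tr(F^T(AB-BA)) = -\tr(F(AB-BA))$. Equivalently, $[A,B]$ is antisymmetric — indeed $[A,B]^T = B^TA^T - A^TB^T = BA - AB = -[A,B]$ since $A,B$ are symmetric — and for any matrix $F$ and antisymmetric matrix $N$ one has $\tr(F^TN) = \tr((F^TN)^T) = \tr(N^TF) = -\tr(NF) = -\tr(FN)$, where the last step is cyclicity. This yields $\<F,[A,B]\> = -\<F^T,[A,B]\>$.

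There is essentially no obstacle here: the whole lemma is a two-line trace manipulation, the only points to be careful about are (i) correctly transposing the Lie bracket and products, and (ii) invoking the symmetry hypotheses on $A$ and $B$ at the right spots (once to move $A$'s transpose, once to establish antisymmetry of $[A,B]$). I would present it exactly in that order: reduce $\<[A,F],B\>$ to $\tr(F^T[A,B])$, identify this with $\<F,[A,B]\>$, then use antisymmetry of $[A,B]$ to flip the sign and pass to $F^T$.
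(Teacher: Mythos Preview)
Your proof is correct and follows essentially the same approach as the paper: expand the Frobenius inner product as a trace, use cyclicity together with $A^T=A$ to obtain the first equality, and then invoke the antisymmetry of $[A,B]$ (from $A,B$ symmetric) for the second. The only cosmetic difference is the order in which you apply $A^T=A$ versus the cyclic permutation.
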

\begin{proof}
	Note the following identity
	\begin{align*}
		\<[A,F],B \> =& \<(AF-FA),B\>=\tr\((AF)^TB-(FA)^TB\)
		\\
		=&\tr\(F^TA^TB-A^T(F^TB)\)=\tr\(F^TA^TB-(F^TB)A^T\)
		\\
		=&\<F,[A^T,B]\>=\<F,[A,B]\>.
	\end{align*}
	For the second identity in \eqref{energy p2.1}, we observe that
	\[\<F,[A,B]\>=\<F^T,[A,B]^T\>=-\<F^T,(A^TB^T-B^TA^T)\>=-\<F^T,[A,B]\>.\]
	Here we used the fact that  $A,B$ are symmetric in the last step.
\end{proof}

Now, we show  the following energy identity:
\begin{lemma}\label{lem energy}Let $(Q_L, v_L)$ be a strong solution to the system \eqref{RBE1}-\eqref{RBE3}  in $\R^ 3 \times  (0, T_L
	)$ with the initial condition $(Q_{L,0},v_{L,0})\in H^2_{Q_e}(\R^3;S_*)\times H^1(\R^3;\R^3)$ and $\div v_{L,0}=0$. Then, for any $s\in
(0, T_L)$, we have
	\begin{align*}
		&\int_{\R^3}\(f_E(Q_L,\D Q_L)+\frac1L f_B(Q_L)+\frac{|v_L|^2}{2}\)(\cdot,s)\,dx\\
		&+\int_0^s\int_{\R^3}\left|\MH(Q_L,\D Q_L)+\frac 1Lg_B(Q_L)\right|^2\,dxdt
		+ \int_0^s\int_{\R^3}|\D v_L|^2\,dxdt
		\\
		&=\int_{\R^3}\Big(f_E(Q_{L,0},\D Q_{L,0})+\frac1L f_B(Q_{L,0})+\frac{|v_{L,0}|^2}{2}\Big)\,dx.
		\alabel{energy eq}
	\end{align*}
\end{lemma}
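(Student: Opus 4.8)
The plan is to derive the energy identity \eqref{energy eq} as a standard energy estimate: test the momentum equation \eqref{RBE1} with $v_L$, test the $Q$-equation \eqref{RBE3} with the full molecular field $H_L=\MH(Q_L,\D Q_L)+\frac1L g_B(Q_L)$, integrate in space (and then in time from $0$ to $s$), and show that the cross terms coming from the stress tensors in \eqref{RBE1} cancel exactly against the cross terms coming from $v_L\cdot\D Q_L$ and $[Q_L,\Omega_L]$ in \eqref{RBE3}. Throughout, I would work at the level of the strong solution (which has the regularity stated before Theorem~\ref{thm loc}, so all the integrations by parts below are justified and the boundary terms at spatial infinity vanish because $Q_L-Q_e\in H^2$, $v_L\in H^1$), and I would first prove the identity for smooth compactly perturbed data and then pass to the limit, or simply invoke density.

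First I would compute the rate of change of the energy. For the kinetic part, multiply \eqref{RBE1} by $v_L$ and integrate: using $\div v_L=0$ the transport term $\int v_L\cdot\D v_L\cdot v_L$ vanishes, the pressure term $\int\D P_L\cdot v_L$ vanishes, and $-\int\Delta v_L\cdot v_L=\int|\D v_L|^2$, so
\[
\frac{d}{dt}\int_{\R^3}\frac{|v_L|^2}{2}\,dx+\int_{\R^3}|\D v_L|^2\,dx
=\int_{\R^3}\D\cdot\big([Q_L,\MH]+\sigma(Q_L,\D Q_L)\big)\cdot v_L\,dx.
\]
For the elastic-plus-bulk part, I would use the fact that $\MH(Q_L,\D Q_L)+\frac1L g_B(Q_L)$ is (minus) the $L^2$-gradient of $\int f_E+\frac1L f_B$ in the space $S_0$: by \eqref{Mol} and \eqref{gB}, and the definition of $f_B$, one has
\[
\frac{d}{dt}\int_{\R^3}\Big(f_E(Q_L,\D Q_L)+\tfrac1L f_B(Q_L)\Big)dx
=-\int_{\R^3}\big\langle \MH+\tfrac1L g_B,\ \p_t Q_L\big\rangle dx,
\]
after integrating by parts in $x$ (here the trace-free projection $\delta_{ij}/3(\cdots)$ in \eqref{Mol} contributes nothing because $\p_t Q_L\in S_0$ is trace-free). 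Substituting $\p_t Q_L=\MH+\frac1L g_B-v_L\cdot\D Q_L-[Q_L,\Omega_L]$ from \eqref{RBE3} gives
\[
\frac{d}{dt}\int\Big(f_E+\tfrac1L f_B\Big)dx
=-\int\big|\MH+\tfrac1L g_B\big|^2dx+\int\big\langle\MH+\tfrac1L g_B,\ v_L\cdot\D Q_L+[Q_L,\Omega_L]\big\rangle dx.
\]

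The remaining and main task is the cancellation of the coupling terms. I would show
\[
\int_{\R^3}\D\cdot\big([Q_L,\MH]+\sigma\big)\cdot v_L\,dx
+\int_{\R^3}\big\langle\MH+\tfrac1L g_B,\ v_L\cdot\D Q_L+[Q_L,\Omega_L]\big\rangle dx=0.
\]
For this I would integrate the first integral by parts to turn $\D\cdot(\cdots)\cdot v_L$ into $-\langle[Q_L,\MH]+\sigma,\ \D v_L\rangle$, split $\D v_L=D+\Omega_L$, and match pieces: the $[Q_L,\MH]$ term paired with $\Omega_L$ is handled by Lemma~\ref{Lie} (with $A=Q_L$, $F=\MH$, $B=\Omega_L$ — note $\Omega_L$ is skew, so one uses $\langle[Q_L,\MH],\Omega_L\rangle=\langle\MH,[Q_L,\Omega_L]\rangle$ after accounting for symmetry, giving the cancellation with the $\langle\MH,[Q_L,\Omega_L]\rangle$ term); the $[Q_L,\MH]$ term paired with $D$ vanishes since $[Q_L,\MH]$ is antisymmetric while $D$ is symmetric, and the $g_B$ term paired with $[Q_L,\Omega_L]$ vanishes because $g_B(Q_L)$ commutes with $Q_L$ (it is a polynomial in $Q_L$), so $\langle g_B(Q_L),[Q_L,\Omega_L]\rangle=\langle[g_B(Q_L),Q_L],\Omega_L\rangle=0$ by Lemma~\ref{Lie}; finally the distortion stress $\sigma$, whose divergence is given by $\D_j\sigma_{ij}=-\D_j(\D_i(Q_L)_{kl}\p_{p^j_{kl}}f_E)$ (recall $\xi=0$ here), paired with $\D v_L$, cancels against $\langle\MH,v_L\cdot\D Q_L\rangle$ after an integration by parts and the identity $v_L\cdot\D f_E=\langle\p_Q f_E,v_L\cdot\D Q_L\rangle+\p_{p^k}f_E:\D(v_L\cdot\D Q_L)$ together with $\div v_L=0$ — this is the Ericksen-stress/Leslie-type cancellation and is the one genuinely computational step. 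I expect \emph{this last cancellation involving $\sigma$ and $f_E$} to be the main obstacle, since with $L_4\neq0$ the density $f_E$ depends on $Q_L$ explicitly (through the $Q_{ln}Q_{kn}$ and $|Q|^2$ factors), so one must carefully keep the $\p_Q f_E$ contribution; however, this $\p_Q$ term is exactly the one produced by the $-\langle\p_Q f_E,v_L\cdot\D Q_L\rangle$ piece of $\langle\MH,v_L\cdot\D Q_L\rangle$, so it matches. Once the cancellation is established, adding the three differential identities and integrating in time from $0$ to $s$ yields \eqref{energy eq}.
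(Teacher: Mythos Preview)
Your proposal is correct and follows essentially the same route as the paper's proof: test \eqref{RBE1} with $v_L$, test \eqref{RBE3} with $\MH+\tfrac1L g_B$, and cancel the coupling terms using the Ericksen-stress identity together with $[Q_L,g_B(Q_L)]=0$. The only cosmetic difference is in how Lemma~\ref{Lie} is invoked for the bracket cancellation: the paper takes $A=Q_L$, $B=\MH+\tfrac1L g_B$, $F=\nabla v_L$ (so both $A,B$ are symmetric as that lemma requires, and the second identity there produces the $\Omega_L$ pairing automatically), whereas you split $\nabla v_L=D+\Omega_L$ first and then pair $[Q_L,\MH]$ with $\Omega_L$; this is the same trace manipulation grouped differently.
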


\begin{proof}
	Taking $L^2$ inner product of \eqref{RBE1} with $v$ and using integration by part yield
	\begin{align*}
		&\frac{1}{2}\frac{d}{\,dt}\int_{\R^3}|v|^2\,dx+\int_{\R^3}|\D v|^2\,dx
		\\
		=&\int_{\R^3} \p_{p_{kl}^j}
		f_E\D_iQ_{kl} \D_j v_i\,dx-\int_{\R^3}[Q, \MH(Q ,\D Q )]_{ij}\D_j v_i\,dx.\alabel{kinetic energy}
	\end{align*}
	Next, multiplying \eqref{RBE3} with $(\MH(Q ,\D Q ) +\frac 1Lg_B(Q ))$ gives
	\begin{align*}
		&-\int_{\R^3} \<\p_tQ ,\MH(Q ,\D Q )+\frac 1Lg_B(Q )\>\,dx+\int_{\R^3}|\MH(Q ,\D Q )+\frac 1Lg_B(Q )|^2\,dx\\
		=&\int_{\R^3}\<(v\cdot \D) Q+[Q , \Omega ],\MH(Q ,\D Q )+\frac 1Lg_B(Q )\>\,dx.
		\alabel{energy p2}
	\end{align*}
	In view of \eqref{Mol} and the relation that
	\[\<\p_tQ,g_B(Q)\>=\<\p_tQ,-\p_{Q }f_B(Q)\>,\]we have
	\begin{align}\label{energy p2.00}
		&-\int_{\R^3} \<\p_tQ,\MH(Q,\D Q)+\frac 1Lg_B(Q)\>\,dx=\frac{d}{\,dt}\int_{\R^3} (f_E(Q,\D Q)+\frac1L f_B(Q))\,dx.
	\end{align}
	Utilizing  \eqref{Mol}-\eqref{gB}  and integrating  by parts, we   have
	\begin{align*}
		&\int_{\R^3} \<(v\cdot\D) Q,\MH(Q,\D Q)+\frac 1Lg_B(Q)\>\,dx
		\\
		=&\int_{\R^3}   \<(v\cdot\D)Q,  \D_j\big(\p_{p^j}
		f_E\big)- \p _{Q}f_E \> \,dx -\int_{\R^3} \<(v\cdot\D)Q,\frac1L\p_{Q} f_B(Q) \>\,dx
		\\
		=&-\int_{\R^3}\D_jv_i\D_iQ _{kl}\p_{p_{kl}^j}
		f_E + v_i\( \D^2_{ij}Q _{kl}\p_{p_{kl}^j}
		f_E-\D_iQ_{kl} \p _{Q_{kl}}f_E -\frac{1}{L}\D_i f_B(Q)\)\,dx
		\\
		=&-\int_{\R^3} \p_{p_{kl}^j}
		f_E\D_iQ_{kl} \D_j v_i\,dx-\int_{\R^3} v_i\D_i f\,dx=-\int_{\R^3}\p_{p^j_{kl}} f_E\D_iQ _{kl}\D_jv_i \,dx
		,\alabel{energy p2.0}
	\end{align*}
	where we have used $\tr (Q)=0$ in the second equality.
	
	Choosing $A=Q,B= \MH(Q,\D Q)+\frac 1Lg_B(Q), F=\D v$ in  Lemma \ref{Lie} and using the fact that $[Q,g_B]=0$, we have
	\begin{align*}
		& \<[Q , \Omega ],\MH(Q,\D Q)+\frac 1Lg_B(Q)\> = \D_jv_i[Q,\MH(Q,\D Q)]_{ij} .\alabel{energy p2.1 +}
	\end{align*}
	Integrating \eqref{energy p2.1 +} in $x$ and
	substituting \eqref{energy p2.00}-\eqref{energy p2.0} into \eqref{energy p2} give
	\begin{align*}
		&\frac{d}{\,dt}\int_{\R^3} f(Q,\D Q)\,dx+\int_{\R^3}|\MH(Q,\D Q)+\frac 1Lg_B(Q)|^2\,dx
		\\
		=&\int_{\R^3}\D_lv_k[Q,\MH(Q,\D Q)]_{kl}\,dx- \int_{\R^3} \p_{p_{kl}^j}
		f_E\D_iQ_{kl} \D_j v_i\,dx
		.\alabel{free energy}
	\end{align*}
	Therefore, the energy identity \eqref{energy eq} follows from taking the sum of  \eqref{kinetic energy} and \eqref{free energy} and
integrating over the
	time interval $[0,s]$.
\end{proof}

We rotate the equation \eqref{RBE3} by $R(Q_L)$; i.e.
\begin{align*}\alabel{ROT RBE}
	&R^T(Q_L)\big(\p_t Q_L+(v_L\cdot \D Q_L) +[Q_L, \Omega_L]\big)R(Q_L)
	\\
	=& R^T(Q_L)\MH(Q_L,\D Q_L) R(Q_L)+\frac 1Lg_B(\tilde Q_L),
\end{align*}
where we used the fact that $R(Q_L)^Tg_B(Q_L)R(Q_L)=g_B(\tilde Q_L)$.

The strong solutions also admit the following local energy inequality:
\begin{lemma}\label{lem 1ord}
Let $(Q_L, v_L)$ be a strong solution to the system \eqref{RBE1}-\eqref{RBE3}  in $\R^ 3 \times  (0, T_L
)$. Assume that $Q\in S_\delta$ for   sufficiently small $\delta$  on  $\R^3\times(0, T_L)$. Then, for any $\phi  \in  C^\infty_0  (\R^3 ) $
and $s \in  (0, T_L  )$, we have
	\begin{align*}
	&\int_{\R^3}\(|\D Q_L|^2+ |v_L |^2+\frac{|Q_L -\pi(Q_L )|^2}{L}\)(\cdot,s)\phi^2\,dx
	\\
	&+\int_0^{s }\int_{\R^3}\(|\D^2 Q_L|^2+|\D v_L|^2+|\p_t Q_L|^2+\frac{|\D(Q_L -\pi(Q_L ))|^2}{L}\)\phi^2\,dxdt
	\\
	\leq& C\int_{\R^3}\(|\D Q_{L,0}|^2+ |v_{L,0}|^2+\frac{|Q_{L,0}-\pi(Q_{L,0})|^2}{L}\)\phi^2\,dx
	\\
	&+C\int_0^{s }\int_{\R^3}|\D Q_L|^2\(|\D Q_L|^2+|v_L|^2+\frac{|Q_L-\pi(Q_L)|^2}{L}\)\phi^2\,dxdt
	\\
	&+C\int_0^{s }\int_{\R^3}|P_L-c_L^*(t)||v_L||\D \phi||\phi|+(|\D Q_L|^2+|v_L|^2)|\D\phi|^2\,dxdt
	,\alabel{1ord eq}
	\end{align*}
where $C$ is a constant independent of $L$ and  $c_L^*(t)$ is a   function in $t$ to be chosen later.
\end{lemma}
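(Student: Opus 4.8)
The inequality is obtained by combining three localized tests of the system and then absorbing error terms; throughout $\phi\in C_0^\infty(\R^3)$ is fixed, $Q\in S_\delta$ so that $\pi(Q)$ and the smooth rotation $R_Q=R(\pi(Q))$ are defined with $|Q|\le C$ and $|\D R_Q|\le C|\D Q|$, and I use freely the structure bounds \eqref{sec2 f_1}--\eqref{sec2 f_E} for $f_E$, the bounds \eqref{f_B and dist} for $f_B$, Lemma \ref{lem f_B 1st}, Corollary \ref{cor fB} and Lemma \ref{lemgb}. First I would localize the energy law of Lemma \ref{lem energy}: test \eqref{RBE1} by $v\phi^2$ and \eqref{RBE3} by $\big(\MH(Q,\D Q)+\frac1L g_B(Q)\big)\phi^2$, then add. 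The time-derivative terms assemble into $\frac{d}{dt}\int(f_E+\frac1L f_B+\frac12|v|^2)\phi^2$ (using $g_B=-\p_Q f_B-\frac b3\tr(Q^2)I$ and $\tr\p_tQ=0$), while the distortion stress, the term $[Q,\MH]$, the transport $v\cdot\D Q$ and the rotation $[Q,\Omega]$ cancel their principal parts exactly as in Lemma \ref{lem energy} via Lemma \ref{Lie} and $[Q,g_B]=0$. The surviving terms all carry a derivative on $\phi$ or on $\p_p f_E$: they are of the form $|\D\phi||\phi|(|\D Q|^2+|v|^2)$, $|P-c^*(t)||v||\D\phi||\phi|$ (the pressure contributing $\int(P-c^*(t))\,v\cdot\D\phi^2$ after an integration by parts using $\div v=0$, for any $c^*(t)$), the cubic term $\int|v|^2\,v\cdot\D\phi^2$ (dominated by $\eta\int|\D v|^2\phi^2$ plus lower order via the $3$D Gagliardo--Nirenberg inequality $\|v\phi\|_{L^3}^3\le C\|v\phi\|_{L^2}^{3/2}\|\D(v\phi)\|_{L^2}^{3/2}$ and Young), and a remainder bounded by $C\int|\D Q||\D^2Q||\D\phi|\phi$ which is absorbable by $\eta\int|\D^2Q|^2\phi^2$. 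This controls $\int(f_E+\frac1L f_B+\frac12|v|^2)(s)\phi^2$, $\int_0^s\!\!\int|\D v|^2\phi^2$ and $\int_0^s\!\!\int|\MH+\frac1L g_B|^2\phi^2$, and hence, via $\p_tQ=\MH+\frac1L g_B-v\cdot\D Q-[Q,\Omega]$, also $\int_0^s\!\!\int|\p_tQ|^2\phi^2$.

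Second, to produce $\int_0^s\!\!\int|\D^2Q|^2\phi^2$ I would test \eqref{RBE3} by $-\Delta Q\,\phi^2$. The time term gives $\tfrac12\frac{d}{dt}\int|\D Q|^2\phi^2$ up to $\int\p_tQ\cdot\D Q\cdot\D\phi^2$; the terms $\<v\cdot\D Q+[Q,\Omega],-\Delta Q\>\phi^2$ are $\le\eta\int|\D^2Q|^2\phi^2+C\int(|v|^2+|\D v|^2+|\D Q|^2)|\D Q|^2\phi^2$; and integrating by parts in $\<\MH,-\Delta Q\>\phi^2$ and using that $f_E$ is quadratic in $p$, the leading part equals $-\int\p^2_{p^k_{ij}p^r_{mn}}f_E\,\D_l\D_r Q_{mn}\,\D_l\D_k Q_{ij}\,\phi^2\le-\tfrac\alpha2\int|\D^2Q|^2\phi^2$ by \eqref{sec2 f_E}, while the $\p^2_{pQ}f_E$, $\p_Q f_E$ and $\D\phi$ contributions are bounded by \eqref{sec2 f_1} and Young by $\eta\int|\D^2Q|^2\phi^2+C\int|\D Q|^4\phi^2+C\int(|\D Q|^2+|v|^2)|\D\phi|^2$. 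Absorbing the $\eta$-terms yields the $|\D^2Q|^2$ bound.

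Third --- and this is the genuinely new ingredient for $L_4\neq0$ --- I would bound $\frac1L\int_0^s\!\!\int|\D(Q-\pi(Q))|^2\phi^2$. Rotating \eqref{RBE3} by $R_Q$ as in \eqref{ROT RBE}, the bulk term becomes $\frac1L g_B(\tilde Q)$ with $\tilde Q$ of the block form \eqref{Q13}, so that $\D_\beta\tilde Q$ is traceless of the form \eqref{xi}; I would pair the rotated equation with $-\D_\beta(\phi^2\D_\beta\tilde Q)$ (equivalently, test against the localized Laplacian of $\tilde Q-Q^+=R_Q^T(Q-\pi(Q))R_Q$). After an integration by parts the decisive contribution of $\frac1L g_B(\tilde Q)$ is $-\frac1L\int\p^2_{\tilde Q_{ij}\tilde Q_{kl}}f_B(\tilde Q)\,\D_\beta\tilde Q_{ij}\,\D_\beta\tilde Q_{kl}\,\phi^2$ up to $\D\phi$-errors, which by Corollary \ref{cor fB} (and $|\D R_Q|\le C|\D Q|$) is $\le-\frac{\lambda}{2L}\int|\D(Q-\pi(Q))|^2\phi^2+\frac CL\int|Q-\pi(Q)|^2|\D Q|^2\phi^2$ --- the last quantity being exactly the bulk part of the bad term on the right of \eqref{1ord eq}. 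In executing this one must separate the derivatives of $\tilde Q$ that fall on $Q$ from those that fall on the $x$-dependent rotation $R_Q$ (equivalently, deal with the mismatch $R_Q^T\D\pi(Q)R_Q=-\D R_Q^T\pi(Q)R_Q-R_Q^T\pi(Q)\D R_Q$); the offending cross-terms are precisely those annihilated by the geometric identity \eqref{gB pi} of Lemma \ref{lemgb}, the rest being $O(|\D Q|)$ commutators absorbable into the $\eta$-terms and the bad term. The $\p_t$-, transport-, rotation- and $\MH$-terms of this pairing reproduce a $\frac{d}{dt}$-quantity already accounted for together with $\eta$-absorbable remainders, handled as in the second step.

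Adding the three estimates, choosing $\eta$ small to absorb $\eta\int|\D^2Q|^2\phi^2$, $\eta\int|\p_tQ|^2\phi^2$, $\eta\int|\D v|^2\phi^2$ and $\frac\eta L\int|\D(Q-\pi(Q))|^2\phi^2$ to the left, and integrating over $[0,s]$ gives \eqref{1ord eq}; the boundary term at $t=0$ is controlled by \eqref{sec2 f_1} and \eqref{f_B and dist} in terms of the initial-data integral on the right. \emph{The main obstacle} is the third step: because $\p^2_Q f_B$ is degenerate along directions tangent to $S_*$, coercivity of the bulk dissipation survives only after rotating into the frame \eqref{Q13} and restricting to variations of the form \eqref{xi}, at the cost of the geometric identity \eqref{gB pi}; keeping track of the commutators produced by the $x$-dependent rotation $R_Q$ --- which are $O(|\D Q|)$ and generate exactly the term $|\D Q|^2\frac{|Q-\pi(Q)|^2}{L}$ on the right of \eqref{1ord eq} --- is the technical core of the proof.
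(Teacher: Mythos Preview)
Your overall strategy and your identification of Lemma~\ref{lemgb} as the key device are right, but the separation into steps~2 and~3 has a genuine gap. In step~2 you test \eqref{RBE3} by $-\Delta Q\,\phi^2$ and discuss only the $\MH$ contribution; the bulk term $\tfrac1L\langle g_B(Q),-\Delta Q\rangle\phi^2$ is never addressed. After integration by parts it becomes $-\tfrac1L\int\partial^2_{Q_{ij}Q_{kl}}f_B(Q)\,\nabla_\beta Q_{ij}\nabla_\beta Q_{kl}\,\phi^2$, and here Lemma~\ref{lem f_B 1st} does \emph{not} apply: viewed in the rotated frame, the direction $R_Q^T\nabla Q\,R_Q$ is not of the block form \eqref{xi} (its $(1,3)$ and $(2,3)$ entries come from the commutator $[\tilde Q,\,R_Q^T\nabla R_Q]$ and are generically of size $|\nabla Q|$). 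Since $\partial^2 f_B(Q^+)$ vanishes in those directions, the best pointwise lower bound is $\partial^2 f_B(Q)(\nabla Q,\nabla Q)\ge -C|Q-\pi(Q)|\,|\nabla Q|^2$, leaving an error $\tfrac{C}{L}|Q-\pi(Q)|\,|\nabla Q|^2$ that is \emph{strictly larger} than the allowed term $\tfrac{C}{L}|Q-\pi(Q)|^2|\nabla Q|^2$ in \eqref{1ord eq} and cannot be Young-split without producing $\tfrac1L|\nabla Q|^4$ or $\tfrac1L|\nabla Q|^2$. Your step~3, pairing with $\nabla\tilde Q$, does not repair this: there the $g_B$-pairing is simply $-\tfrac1L\partial^2 f_B(\tilde Q)(\nabla\tilde Q,\nabla\tilde Q)$, which is coercive by Lemma~\ref{lem f_B 1st} \emph{without} any cross-terms for Lemma~\ref{lemgb} to kill; but the $\MH$-pairing is not $\eta$-absorbable, because $\nabla\tilde Q=R_Q^T\nabla Q\,R_Q+[\tilde Q,R_Q^T\nabla R_Q]$ with the commutator of full size $|\nabla Q|$, so after integration by parts it contributes $C|\nabla^2 Q|^2$ with a constant that is not small.

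The paper avoids both difficulties by \emph{merging} your steps~2--3 into one test: differentiate the rotated equation \eqref{ROT RBE} and pair with $R_Q^T\nabla_\beta Q\,R_Q\,\phi^2$ (not $\nabla_\beta\tilde Q\,\phi^2$). Since $|R_Q^T\nabla Q\,R_Q|=|\nabla Q|$ and $\langle R_Q^T\nabla\MH\,R_Q,\,R_Q^T\nabla Q\,R_Q\rangle=\langle\nabla\MH,\nabla Q\rangle$, the $\MH$-pairing reduces to $\langle\nabla\MH,\nabla Q\rangle\phi^2$ up to errors $C|\nabla R_Q|\,|\MH|\,|\nabla Q|\le C|\nabla Q|^2(|\nabla^2Q|+|\nabla Q|^2)$, which \emph{are} absorbable; and for the $g_B$-pairing one writes $R_Q^T\nabla Q\,R_Q=\nabla\tilde Q-(\nabla R_Q^T\,Q\,R_Q+R_Q^T\,Q\,\nabla R_Q)$, splits $Q=\pi(Q)+(Q-\pi(Q))$ in the commutator, uses Lemma~\ref{lemgb} to annihilate the $\pi(Q)$-part, and notes that the $(Q-\pi(Q))$-part carries the extra factor $|Q-\pi(Q)|$, yielding exactly the allowed error $\tfrac{C}{L}|Q-\pi(Q)|^2|\nabla Q|^2$ after Young against the coercive $\tfrac{\lambda}{L}|\nabla\tilde Q|^2$. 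This single test (equation \eqref{Delta Q eq}--\eqref{1ord Delta Q} in the paper), together with the pairing of \eqref{RBE3} against $\partial_t Q\,\phi^2$ and your step-1 localized energy law, closes to give \eqref{1ord eq}. In short: the right test function for the differentiated rotated equation is $R_Q^T\nabla Q\,R_Q$, not $\nabla Q$ and not $\nabla\tilde Q$; this is precisely where Lemma~\ref{lemgb} enters.
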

\begin{proof}
It follows from using \eqref{ROT RBE} that
	\begin{align*}
		&\int_{\R^3}\<\D\(R^T(Q)(\p_t Q+v\cdot \D  Q +[Q, \Omega])R(Q)\),R^T(Q)\D_\beta QR(Q)\>\phi^2\,dx
		\\
		=&\int_{\R^3}\<\D(R^T(Q) \MH(Q,\D Q)R(Q))+\frac 1L \D g_B(\tilde Q),R^T(Q)\D_\beta QR(Q)\>\phi^2\,dx
		.\alabel{Delta Q eq}
	\end{align*}
To estimate the first term on the right-hand side of \eqref{Delta Q eq}, we observe
\begin{align*}
	&\int_{\R^3}\<\D(R^T(Q) \MH(Q,\D Q)R(Q)),R^T(Q)\D_\beta QR(Q)\>\phi^2\,dx
	\\
	\leq &\int_{\R^3}\<\D \MH(Q,\D Q) , \D_\beta Q \>\phi^2\,dx+C\int_{\R^3}|\D R(Q)||\D Q| |\MH(Q,\D Q) |\phi^2\,dx
	\\
	\leq&\int_{\R^3}\D_\beta\D_k (\p_{p_{ij}^k} f_E)\D_\beta Q_{ij}\phi^2\,dx+C\int_{\R^3}|\p_Qf_E||\D Q||\D \phi|\phi\,dx
	\\
	&+C\int_{\R^3}( |\MH(Q,\D Q) ||\D Q|^2+| \p_{Q} f_E(Q,\D Q)||\D^2 Q|)\phi^2\,dx.\alabel{fE 1st}
\end{align*}
Using  the   condition  \eqref{sec2 f_E} on $f_E$   and integrating by parts, we have
\begin{align*}
	&\int_{\R^3}\D_\beta\D_k (\p_{p_{ij}^k} f_E)\D_\beta Q_{ij}\phi^2\,dx
	\\
	=&\int_{\R^3}\D_k\( \p^2_{p_{ij}^kp_{mn}^l} f_E \D^2_{\beta l}Q_{mn}+ \p^2_{p_{ij}^kQ_{mn}} f_E \D_\beta Q_{mn}\)\D_\beta
	Q_{ij}\phi^2\,dx
	\\
	\leq&-\int_{\R^3}\p^2_{p_{ij}^kp_{mn}^l} f_E \D^2_{\beta l}Q_{mn}\D^2_{k\beta}Q_{ij}\phi^2\,dx +C\int_{\R^3}|\p^2_{p p } f_E| |\D^2Q||\D
Q||\D\phi||\phi|\,dx
	\\
	&+C\int_{\R^3}\Big(|\D(\p^2_{p Q} f_E)| |\D Q|+|\p^2_{p Q} f_E| |\D^2 Q|\Big)|\D Q|\phi^2\,dx
	\\
	\leq& -\int_{\R^3}\frac{3\alpha}{8}|\D^2 Q|^2\phi^2+C\(|\D  Q|^4\phi^2+|\D Q|^2|\D \phi|^2\)\,dx
	,\alabel{fE second}
\end{align*}
where we used that $|\D (\p^2_{pQ} f_E)|\leq C(|\D^2 Q|+|\D Q|^2)$.

Recall from \eqref{gB pi} in Lemma \ref{lem gb} that
\begin{align*}
& \int_{\R^3}\<\frac 1L \D_\beta  g_B(\tilde Q) ,R^T(Q)\D_\beta QR(Q)\>\phi^2\,dx
\\
\leq&-\frac\lambda8\int_{\R^3} \frac{|\D(Q-\pi(Q))|^2}{L}\phi^2\,dx+C\int_{\R^3}|\D Q|^2\frac{|Q-\pi(Q)|^2}{L}\phi^2\,dx
.\alabel{1st gb}
\end{align*}
It follows from \eqref{fE 1st}-\eqref{1st gb} that
	\begin{align*}
	&\int_{\R^3}\<\D(R^T(Q)(\MH(Q,\D Q)R(Q))+\frac 1L \D g_B(\tilde Q),R^T(Q)\D_\beta QR(Q)\>\phi^2\,dx
	\\
	\leq&-\int_{\R^3} \(\frac{\alpha}{4} |\D^2 Q|^2+ \frac{\lambda }8\frac{|\D  (Q-\pi(Q))|^2}{L}\)\phi^2\,dx
	\\
	&+C\int_{\R^3}|\D Q|^2|\D\phi|^2+|\D Q|^2\(|\D Q|^2+\frac{|Q-\pi(Q)|^2}{L} \)\phi^2\,dx
	.\alabel{1ord p4.1}
	\end{align*}

Through integration by parts, we estimate the left-hand side of \eqref{1ord p4.1} by
	\begin{align*}
		&-\int_{\R^3}\<\D_\beta\(R^T(Q)(\p_t Q+v\cdot \D  Q +[Q, \Omega])R(Q)\),R^T(Q)\D_\beta QR(Q)\>\phi^2\,dx
		\\
	\leq& -\int_{\R^3}\<\D_\beta\p_tQ+(v\cdot \D) \D_\beta Q ,\D_\beta Q \>\phi^2\,dx+ \int_{\R^3}\<[Q, \Omega],\D_\beta(\D_\beta Q\phi^2)\>\,dx
	\\
	&+C\int_{\R^3}|\D Q|^2\(| \p_tQ| +|v|| \D  Q| +|\D v| \)\phi^2\,dx
	\\
	\leq&-\frac{1}{2}\frac{d}{\,dt}\int_{\R^3}|\D Q|^2\phi^2\,dx+\int_{\R^3}\Big(\frac{\alpha}{8}|\D^2
	Q|^2+\frac{1}{4}|\p_t
	Q|^2+C|\D v|^2\Big)\phi^2\,dx
	\\
	&+C\int_{\R^3}|\D Q|^2(|v|^2+|\D Q|^2)\phi^2+|\D Q|^2|\D\phi|^2\,dx
	.\alabel{1ord p4}
	\end{align*}
Adding \eqref{1ord p4.1} to \eqref{1ord p4}, we have
\begin{align*}
	&\frac{1}{2}\frac{d}{\,dt}\int_{\R^3}|\D Q|^2\phi^2\,dx+\int_{\R^3} \(\frac{\alpha}4 |\D^2 Q|^2+ \frac{\lambda }8\frac{|\D
(Q-\pi(Q))|^2}{L}\)\phi^2\,dx
	\\
	\leq&\int_{\R^3}\Big(\frac{1}{2}|\p_tQ|^2+C|\D v|^2\Big)\phi^2\,dx+C\int_{\R^3}|\D Q|^2|\D\phi|^2\,dx
	\\
	&+\int_{\R^3}|\D Q|^2\(|\D Q|^2+|v|^2+\frac{|Q-\pi(Q)|^2}{L} \)\phi^2\,dx
	.\alabel{1ord Delta Q}
\end{align*}
Multiplying \eqref{RBE3}  by $\p_t Q\phi^2$ and using \eqref{energy p2.00} in Lemma \ref{lem energy} yield
	\begin{align*}
	&\frac{d}{\,dt}\int_{\R^3}(f_E(Q,\D Q)+\frac1L f_B(Q))\phi^2\,dx+\int_{\R^3}|\p_tQ|^2\phi^2\,dx\\
	=&-2\int_{\R^3}\p_tQ _{ij}\p_{p^k_{ij}} f_E \D_k\phi\phi\,dx-\int_{\R^3}\<(v\cdot\D)
	Q +[Q,\Omega] ,\p_tQ\>\phi^2\,dx\\
	\leq& \int_{\R^3}\( \frac14|\p_tQ|^2+C|\D
	v|^2\)\phi^2\,dx+C\int_{\R^3}|\D Q|^2|v|^2\phi^2+|\D Q|^2|\D\phi|^2\,dx
	.\alabel{1ord Q_t}
	\end{align*}
Adding \eqref{1ord Delta Q} to \eqref{1ord Q_t}, integrating in $t$ and using \eqref{f_B and dist}, we see
	\begin{align*}
	&\int_{\R^3}\(|\D Q|^2+\frac{|Q-\pi(Q)|^2}{L}\)(\cdot,s)\phi^2\,dx
	\\
	&+\int^s_0\int_{\R^3} \(  |\D^2 Q|^2+ |\p_tQ|^2+   \frac{|\D  (Q-\pi(Q))|^2}{L}\)\phi^2\,dxdt
	\\
	\leq
	&C\int_{\R^3}\(|\D Q_0|^2+\frac{|Q_0-\pi(Q_0)|^2}{L}\)\phi^2\,dx
	\\
	&+C\int^s_0\int_{\R^3}|\D v|^2\phi^2\,dxdt+C\int^s_0\int_{\R^3}|\D Q|^2|\D\phi|^2\,dxdt
	\\
	&+\int^s_0\int_{\R^3}|\D Q|^2\(|\D Q|^2+|v|^2+\frac{|Q-\pi(Q)|^2}{L} \)\phi^2\,dxdt
	.\alabel{p3}
	\end{align*}
It remains to estimate the term $\D v$ on the right-hand side of \eqref{p3}.
We multiply  \eqref{RBE1} by $v\phi^2$ and \eqref{RBE3} by $\Big(\MH(Q,\D Q)+\frac{1}{L}g_B(Q)\Big)\phi^2$. Then    it follows from using the
same argument of \eqref{kinetic energy}-\eqref{free energy}  that
\begin{align*}
	&\frac{d}{\,dt}\int_{\R^3}\left(\frac{1}{2}|v|^2+f_E(Q,\D Q)+\frac1L f_B(Q)\right)\phi^2\,dx
	\\
	&+\int_{\R^3}\(|\MH(Q,\D Q)+\frac{1}{L}g_B(Q)|^2+|\D
	v|^2\)\phi^2\,dx
	\\
	=&\int_{\R^3}\(|v|^2+2\(P-c^\ast(t)\)\)v\cdot\D \phi\phi-2\D_k v_i v_i\D_k\phi\phi \,dx
	\\
	&-2\int_{\R^3}[Q, \MH(Q,\D Q)]_{ij}v_i\D_j
	\phi\phi\,dx+2\int_{\R^3}\p_{p_{kl}^j}
	f_E\D_i Q_{kl}v_i\D_j \phi\phi\,dx
	\\
	&-2\int_{\R^3}\p_tQ _{kl}\p_{p^j_{kl}} f_E
	\D_j\phi\phi\,dx-\int_{\R^3}v\cdot\D f\phi^2\,dx-2\int_{\R^3}v_i\D_i Q_{kl}\p_{p_{kl}^j}
	f_E\D_j\phi\phi\,dx \\
	\leq& \eta\int_{\R^3}(|\p_tQ|^2+ |\D^2Q|^2)\phi^2\,dx+\frac{1}{2}\int_{\R^3}|\D v|^2\phi^2\,dx+C\int_{\R^3}|\D Q|^2(|\D Q|^2+|v|^2)\phi^2\,dx
	\\
	&+C\int_{\R^3}(|\D Q|^2+|v|^2)|\D\phi|^2\,dx+C\int_{\R^3}|P_L-c_L^*(t)||v_L||\D \phi||\phi|\,dx
	\alabel{p1}.
\end{align*}
Integrating  \eqref{p1} in $t$, employing \eqref{p3} and   choosing sufficiently small $\eta$, we   obtain
\begin{align*}
	&\int_{\R^3} |v(\cdot,s)|^2\phi^2\,dx+\int_0^s\int_{\R^3} |\D
	v|^2\phi^2\,dxdt
	\\
	\leq&C\int_{\R^3}\(|\D Q_0|^2+|v_0|^2+\frac{|Q_0-\pi(Q_0)|^2}{L}\)\phi^2\,dx+C\int_{\R^3}|\D Q|^2(|\D Q|^2+|v|^2)\phi^2\,dx
	\\
	&+C\int_0^s\int_{\R^3}(|\D Q|^2+|v|^2)|\D\phi|^2\,dx+C\int_0^s\int_{\R^3}|P_L-c_L^*(t)||v_L||\D \phi||\phi|\,dx
	\alabel{v}.
\end{align*}
Applying \eqref{v} to \eqref{p3},  we prove \eqref{1ord eq}.
\end{proof}

Through Lemma \ref{lem gb}  and the equation \eqref{ROT RBE}, we obtain second order estimates on $(\D^2 Q_L, \D v_L)$  in the following:
\begin{lemma}\label{lem 2ord}
Let $(Q_L, v_L)$ be a strong solution to the system \eqref{RBE1}-\eqref{RBE3}  in $\R^ 3 \times  (0, T_L )$. Assume that $Q\in S_\delta$ for
sufficiently small $\delta$  on  $\R^3\times(0, T_L)$. Then for any $\phi  \in  C^\infty_0  (\R^3 ) $ and $s \in  (0, T_L  )$, we have the
following local estimate
	\begin{align*}
		&\int_{\R^3}\(|\D^2 Q_L|^2+ |\D v_L |^2+\frac{|\D  (Q_{L}-\pi(Q_{L}))|^2}{L}\)(\cdot,s)\phi^2\,dx
		\\
		&+\int_0^{s}\int_{\R^3}\(|\D^3 Q_L|^2+|\D^2 v_L|^2+|\D\p_t Q_L|^2+\frac{|\D^2  (Q_{L}-\pi(Q_{L}))|^2}{L}\)\phi^2\,dxdt
		\\
		\leq& C\int_{\R^3}\(|\D^2 Q_{L,0}|^2+ |\D v_{L,0}|^2+\frac{|\D  (Q_{L,0}-\pi(Q_{L,0}))|^2}{L}\)\phi^2\,dx
		\\
		& +C\int_{\R^3} \frac{|Q_{L,0}-\pi(Q_{L,0})|^2}{L}|\D Q_{L,0}|^2\phi^2+ \(\frac{|Q-\pi(Q)|^2}{L}|\D Q|^2\)(\cdot,s)\phi^2\,dx
		\\
		&+C\int_0^{s}\int_{\R^3}e(Q_L,v_L)\(|\D^2 Q_L|^2+|\D v_L|^2+|\p_t Q_L|^2\)\phi^2\,dxdt
		\\
		&+C\int_0^{s}\int_{\R^3}e(Q_L,v_L)\(\frac{|\D (Q_L-\pi(Q_L))|^2}{L}+e^2(Q_L,v_L)\)\phi^2\,dxdt
		\\
		&+C\int_0^{s}\int_{\R^3}\(e^2(Q_L,v_L)+|P_L-c_L^*(t)|^2\)
		(|\D \phi|^2+|\D^2 \phi||\phi|)\,dxdt
		\\
		&+C\int_0^{s}\int_{\R^3}\(|\D^2 Q_L|^2+|\D v_L|^2+|\p_t Q_L|^2\)(|\D \phi|^2+|\D^2 \phi||\phi|)\,dxdt
		,\alabel{2ord eq}
	\end{align*}
where $C$ is a constant independent of $L$ and  $c_L^*(t)$ is a   function in $t$ to be chosen later.  Here we denote
\[e(Q_L,v_L):=|\D Q_L|^2+|v_L|^2+\frac{|Q_L-\pi(Q_L)|^2}{L}.\]
\end{lemma}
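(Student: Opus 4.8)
The strategy is to follow the pattern of Lemma \ref{lem 1ord}, but now differentiating the rotated equation \eqref{ROT RBE} twice and testing against $R_Q^T\D^2 Q R_Q\phi^2$, combined with testing the Navier--Stokes equation \eqref{RBE1} against $\Delta v\,\phi^2$ (or equivalently differentiating \eqref{RBE1} and testing against $\D v\,\phi^2$) to produce the $\|\D^2 v\|^2$ dissipation. First I would apply $\D_\beta\D_\gamma$ to \eqref{ROT RBE} and take the $L^2$ inner product with $R_Q^T\D_\beta\D_\gamma Q R_Q\phi^2$. On the elastic side, the key is the ellipticity \eqref{sec2 f_E}: after integrating by parts twice, the principal term $\int \p^2_{p^k_{ij}p^l_{mn}}f_E\,\D^3 Q\,\D^3 Q\,\phi^2$ is bounded below by $\tfrac{3\alpha}{8}\int|\D^3 Q|^2\phi^2$, and all lower-order commutator terms — those where derivatives land on $R_Q$, on $\p^2_{pQ}f_E$, on $\p_Q f_E$, or where $\phi$ is differentiated — are absorbed into $\eta\int(|\D^3 Q|^2+|\D^2 v|^2)\phi^2$ plus the $e(Q_L,v_L)$-weighted terms on the right of \eqref{2ord eq}, using $Q\in S_\delta$ and interpolation (Gagliardo--Nirenberg / Ladyzhenskaya in $\R^3$) to convert products like $|\D^2 Q|^2|\D Q|^2$ into $e\cdot(|\D^2 Q|^2+\cdots)$.

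The bulk term is handled exactly as in Lemma \ref{lem 1ord} but with $k=2$: using $g_B(\tilde Q)=-\D_{\tilde Q}f_B(\tilde Q)-\tfrac b3\tr(\tilde Q^2)I$, the identity in Lemma \ref{lemgb} with $k=2$, and the second inequality \eqref{f_B kth rule} of Corollary \ref{cor fB}, one gets
\begin{align*}
	\frac1L\int_{\R^3}\<\D^2 g_B(\tilde Q),R_Q^T\D_\beta\D_\gamma Q R_Q\>\phi^2\,dx
	\leq -\frac{\lambda}{4}\int_{\R^3}\frac{|\D^2(Q-\pi(Q))|^2}{L}\phi^2\,dx + (\text{good terms}),
\end{align*}
where the good terms are, after expanding $\D^2 g_B(\tilde Q)$ and the $\D^2$ of the rotation conjugation, controlled by $C\int e(Q_L,v_L)\big(\tfrac{|\D(Q_L-\pi(Q_L))|^2}{L}+|\D^2 Q|^2\big)\phi^2$ plus a boundary-type contribution $\tfrac{|Q-\pi(Q)|^2}{L}|\D Q|^2$ which is exactly the term appearing (evaluated at $s$ and at $0$) on the right-hand side of \eqref{2ord eq}; this last term is itself estimated via Lemma \ref{lem 1ord} and interpolation, which explains its presence in the statement. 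For the time-derivative and transport terms on the left of the differentiated equation, integration by parts in $\beta,\gamma$ gives $\tfrac12\tfrac{d}{dt}\int|\D^2 Q|^2\phi^2$ plus terms absorbed into $\eta\int(|\D^3 Q|^2+|\D\p_t Q|^2)\phi^2$ and $e$-weighted terms; the $[Q,\Omega]$ term contributes $\D^2 v$ which is again absorbed.

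For the velocity, I would differentiate \eqref{RBE1}, test against $\D v\,\phi^2$, and run the same computation as in \eqref{p1}--\eqref{v} one order higher: the stress terms $\D\cdot([Q,\MH]+\sigma)$ produce, after integration by parts, the coupling with $\D\p_t Q$, $\D^3 Q$ and $\D^2 v$ which are absorbed by small $\eta$ together with the dissipation $\tfrac12\int|\D^2 v|^2\phi^2$, leaving $e^2(Q_L,v_L)$, the pressure term $|P_L-c_L^*(t)|^2$ and $\phi$-derivative terms as in the statement. Adding the $\D^2 Q$ estimate, the $\D v$ estimate and (as before) the localized energy identity for $|v|^2/2+f_E+\tfrac1L f_B$ with weight $\phi^2$ — which supplies the $\tfrac1L|\D(Q-\pi(Q))|^2$ and $f_E\ge\tfrac\alpha2|\D Q|^2$ control needed to close — and integrating in $t$ yields \eqref{2ord eq}. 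The main obstacle is the careful bookkeeping of the commutator terms generated by the two derivatives hitting the non-smooth-looking but in fact smooth (on $S_\delta$) rotation $R_Q$ and the $Q$-dependence of the coefficients $\p^2_{pp}f_E$: one must repeatedly use $Q\in S_\delta$ (hence $|\D^k R_Q|\lesssim\sum\prod|\D^{\mu_i}Q|$ from \eqref{kth R}, and $|\D(\p^2_{pQ}f_E)|\le C(|\D^2 Q|+|\D Q|^2)$) together with $\R^3$ interpolation to make sure every cubic-or-higher nonlinearity is dominated by $e(Q_L,v_L)$ times a dissipative quantity, so that the final Gronwall-type inequality indeed has the structure displayed in \eqref{2ord eq}.
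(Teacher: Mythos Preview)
Your outline captures the elastic and bulk mechanisms correctly, but it is missing two structural ingredients that the paper's proof relies on, and without them the estimate does not close.

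First, you never produce the dissipation $\int_0^s\int|\D\p_t Q|^2\phi^2$ nor the pointwise-in-time control of $\tfrac1L\int|\D(Q-\pi(Q))|^2\phi^2$. Testing the twice-differentiated rotated equation against $R_Q^T\D^2 QR_Q\phi^2$ yields $\tfrac{d}{dt}\int|\D^2 Q|^2\phi^2$ and (via Corollary~\ref{cor fB}) only the \emph{dissipation} $\tfrac1L\int|\D^2(Q-\pi(Q))|^2\phi^2$; it does not give $\int|\D\p_t Q|^2$ with a good sign, and the localized energy identity you invoke gives $\tfrac1L|\D(Q-\pi(Q))|^2$ only in the time-integrated sense, not at the fixed time $s$. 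You also cannot recover $|\D\p_t Q|$ from the equation itself, since that would require $\tfrac1{L^2}|\D g_B(Q)|^2\sim\tfrac1{L^2}|\D(Q-\pi(Q))|^2$, one power of $L$ too many. The paper obtains both quantities by a \emph{second} test: differentiating \eqref{ROT RBE} once in $x_\beta$ and multiplying by $\D_\beta(R_Q^T\p_t QR_Q)\phi^2$. The elastic part then contributes $-\tfrac12\p_t\big(\p^2_{pp}f_E\,\D^2 Q\,\D^2 Q\big)$, the bulk part contributes $-\tfrac1{2L}\p_t\big(\p^2_{\tilde Q\tilde Q}f_B\,\D\tilde Q\,\D\tilde Q\big)$, and after integrating in $t$ these give the missing $\sup$ terms (with the error $\tfrac{|Q-\pi(Q)|^2}{L}|\D Q|^2$ at times $0$ and $s$ that appears in \eqref{2ord eq}); the left-hand side gives $\int|\D\p_t Q|^2$.

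Second, your treatment of the velocity equation does not deal with the term $\int\D_j[Q,\MH]_{ij}\Delta v_i\phi^2$. This term is of size $|\D^3 Q|\,|\D^2 v|$; splitting it by Young's inequality produces either $C|\D^3 Q|^2$ or $C|\D^2 v|^2$ with a \emph{large} constant, and since the $Q$-estimate already carries a large $C|\D^2 v|^2$ on the bad side (from $[Q,\D^2\Omega]$), the two estimates cannot be combined to close. The paper resolves this by a \emph{third} test: multiplying the once-differentiated rotated equation by $\D_\beta\big(R_Q^T(\MH+\tfrac1Lg_B)R_Q\big)\phi^2$. Using Lemma~\ref{Lie} with $F=\Delta\D v$, the $[Q,\Omega]$ contribution yields exactly $+\int\Delta v_i\,\D_j[Q,\MH]_{ij}\phi^2$, which cancels the dangerous term from the velocity equation; the remaining $\p_t Q$ and $v\cdot\D Q$ contributions are handled as in the second test above. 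This cancellation is the analogue, one order higher, of the cancellation \eqref{energy p2.1 +} you already used in Lemma~\ref{lem energy}, and it is essential here as well.
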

\begin{proof}
Differentiating \eqref{ROT RBE} with respect to $x_\beta$ and $x_\gamma$,  we multiply the resulting expression by $\D_{\beta}(R^T(Q) \D_\gamma
QR(Q))\phi^2$  to obtain
	\begin{align*}
		&\int_{\R^3}\<\D^2_{\beta_\gamma} \Big(R^T(Q)\big(\p_t Q+v\cdot \D Q +[Q, \Omega]\big)R(Q)\Big), \D_{\beta}(R^T(Q) \D_\gamma QR(Q))
\>\phi^2\,dx
		\\
		=&\int_{\R^3}\<\D^2_{\beta_\gamma}\(R^T(Q)\MH(Q,\D Q) R(Q)+\frac1Lg_B(\tilde Q)\),\D_{\beta}(R^T(Q) \D_\gamma QR(Q))\>\phi^2\,dx
		.\alabel{D Delta Q eq}
	\end{align*}
  Integrating by parts twice and using \eqref{sec2 f_1}-\eqref{sec2 f_E}, we have
	\begin{align*}
		&\int_{\R^3}\<\D^2_{\beta\gamma}\D_k\( \p^2_{p^k} f_E\), \D^2_{\beta\gamma}  Q \>\phi^2\,dx
		\\
		=&-\int_{\R^3}\D_\gamma\(\p^2_{p_{ij}^kp_{mn}^l} f_E \D^2_{\beta l}Q_{mn}+\p^2_{p_{ij}^k  Q_{mn}} f_E \D_{\beta}Q_{mn}\)
\D_k(\D^2_{\beta\gamma}  Q_{ij}\phi^2)\,dx
		\\
		\leq&-\int_{\R^3}\D_\gamma\(\p^2_{p_{ij}^kp_{mn}^l} f_E \D^2_{\beta l}Q_{mn}\)\D^3_{\beta \gamma k} Q_{ij}\phi^2\,dx
		\\
		&+C\int_{\R^3}(|\D Q|^3+ |\D Q||\D^2 Q|)|\D^3 Q|\phi^2\,dx
		\\
		&+C\int_{\R^3}(| \D^2 Q||\D^3 Q |+|\D Q||\D^2 Q|^2+|\D Q|^3|\D^2Q|)|\D\phi||\phi|\,dx
		\\
		\leq&-\int_{\R^3}\frac{3\alpha}{8}|\D^3 Q|^2\phi^2\,dx+C\int_{\R^3}|\D Q|^2(|\D^2 Q|^2+|\D Q|^4)\phi^2+|\D^2 Q|^2|\D \phi|^2  \,dx
		.\alabel{fE third}
	\end{align*}
	Using \eqref{fE third} and integration by parts, we find
	\begin{align*}
		&\int_{\R^3}\<\D^2_{\beta\gamma}\(R^T(Q)\MH(Q,\D Q) R(Q)\),\D_{\beta}(R^T(Q) \D_\gamma QR(Q))\>\phi^2\,dx
		\\
		\leq&-\int_{\R^3}\<\D^2_{\beta\gamma}\p_{p^k} f_E, \D_k \D^2_{\beta\gamma}  Q \>\phi^2\,dx+C\int_{\R^3}|\D^2 \p_{p} f_E||\D^2  Q||\D\phi||\phi|\,dx
		\\
		&+C\int_{\R^3}|\D Q||\MH(Q,\D Q)|\Big(|\D^2(R^T(Q) \D QR(Q))|\phi^2+(|\D^2 Q|^2+|\D Q|^4)|\D \phi||\phi|\Big)\,dx
				\\
				&+|\D \MH(Q,\D Q)|(|\D Q||\D^2 Q|+|\D Q|^3)\phi^2\,dx
		\\
		\leq&-\int_{\R^3}\frac{\alpha}{4}|\D^3 Q|^2\phi^2\,dx+C\int_{\R^3}(|\D^2 Q|^2+|\D Q|^4)|\D Q|^2\phi^2+|\D^2 Q|^2|\D\phi|^2\,dx
		.\alabel{2ord p4}
	\end{align*}
Here we used that \[|\D^2(R^T(Q) \D QR(Q))|+|\D \MH(Q,\D Q)|\leq C(|\D^3 Q|+|\D^2 Q||\D Q|+|\D Q|^3).\]
It follows from \eqref{gB pi k} in Lemma \ref{lem gb} with $k=2$ and Young's inequality,   we obtain
	\begin{align*}
		& \int_{\R^3}\< \D^2_{\beta\gamma}\frac{g_B(\tilde Q)}L ,\D_{\beta}(R^T(Q) \D_\gamma
QR(Q))\>\phi^2\,dx\leq-\frac{\lambda}{8}\int_{\R^3} \frac{|\D^2(Q-\pi(Q))|^2}{L}\phi^2
		\,dx
		\\
		&+C\int_{\R^3}\frac{| \D (Q-\pi(Q))|^2}{L}|\D Q|^2\phi^2+\frac{|Q-\pi(Q)|^2}{L}(|\D^2 Q|^2+|\D Q|^4)\phi^2  \,dx
		.\alabel{2ord g_B D^2 Q}
	\end{align*}
We compute the left-hand side of \eqref{D Delta Q eq} to get
	\begin{align*}
		&-\int_{\R^3}\< \D^2_{\beta\gamma}\Big(R^T(Q)\big(\p_t Q+v\cdot \D Q +[Q, \Omega]\big)R(Q)\Big),\D_{\beta} (R^T(Q) \D_\gamma
QR(Q))\>\phi^2\,dx
		\\
		\leq&-\int_{\R^3} \<\D^2_{\beta\gamma}\p_t Q,\D^2_{\beta\gamma} Q\>\phi^2\,dx
		+C\int_{\R^3} |\D R||\p_t Q||\D(R^T(Q) \D QR(Q))||\D\phi||\phi|\,dx
		\\
		&+C\int_{\R^3}|\D\p_t Q|(|\D^2 Q||\D R(Q)|+|\D Q||\D^2 R(Q)|+|\D Q||\D R(Q)|^2)\phi^2\,dx
		\\
		&+C\int_{\R^3}\Big(|\p_t Q||\D Q|+|v|(|\D^2 Q|+|\D Q|^2)\Big)|\D^2(R^T(Q) \D QR(Q))|\phi^2\,dx
		\\
		&+C\int_{\R^3}\Big(|\D v||\D Q|+|\D^2 v|\Big)|\D^2(R^T(Q) \D QR(Q))|\phi^2+|\D\p_t Q||\D Q|^2|\D \phi||\phi|\,dx
		\\
		\leq&-\frac 12\frac{d}{dt}\int_{\R^3} |\D^2 Q|^2 \phi^2\,dx+\int_{\R^3}\(\frac14|\D \p_t Q|^2+\frac{\alpha}8|\D^3 Q|^2+C|\D^2
v|^2\)\phi^2\,dx
		\\
		&+C\int_{\R^3} (|\D^2 Q|^2+|\D v|^2+|\p_t Q|^2+|\D Q|^4)(|\D Q|^2+|v|^2)\phi^2\,dx
		\\
		&+C\int_{\R^3}(|\D^2 Q|^2+|\D Q|^4)|\D \phi|^2\,dx
		.\alabel{2ord p4.1}
	\end{align*}
In view of \eqref{D Delta Q eq}-\eqref{2ord p4.1}, we have
\begin{align*}
	&\frac12\frac{d}{dt}\int_{\R^3} |\D^2 Q|^2 \phi^2\,dx+ \int_{\R^3} \(\frac{\alpha}{8}|\D^3 Q|^2+\frac{\lambda}{8}\frac{|\D^2(
Q-\pi(Q))|^2}{L}\)\phi^2
	\,dx
	\\
	\leq&  \int_{\R^3}\(\frac14|\D \p_t Q|^2+C|\D^2 v|^2\)\phi^2\,dx+C \int_{\R^3}\(e^2(Q,v)+|\D^2 Q|^2\)|\D \phi|^2\,dx
	\\
	&+C \int_{\R^3}e(Q,v)\(|\D^2 Q|^2+|\D v|^2+|\p_t Q|^2+|\D Q|^4+\frac{|\D (Q-\pi(Q))|^2}{L}\)\phi^2\,dx.
	\alabel{2nd Delta Q}
\end{align*}
We differentiate \eqref{ROT RBE} in   $x_\beta$ and multiply it by
$\D_\beta(R^T(Q)\p_t QR(Q))\phi^2$  to obtain
\begin{align*}
	&\int_0^s\int_{\R^3}\<\D_\beta\( R^T(Q)(\p_t Q +v\cdot \D  Q +[Q, \Omega])R(Q)\), \D_\beta (R^T(Q)\p_t  QR(Q))\>\phi^2\,dxdt
	\\
	=&\int_0^s\int_{\R^3}\<\D_\beta\((R^T(Q)\MH(Q,\D Q) R(Q))+\frac1Lg_B(\tilde Q)\),\D_\beta(R^T(Q)\p_t QR(Q))\>\phi^2\,dxdt
	.\alabel{2nd pt Q}
\end{align*}
Using \eqref{sec2 f_1}-\eqref{sec2 f_E}, we derive
\begin{align*}
	&\int_0^s\int_{\R^3}\<\D_\beta(R^T(Q)\MH(Q,\D Q) R(Q)),\D_\beta(R^T(Q)\p_t QR(Q))\>\phi^2\,dx dt
	\\
	\leq&\int_0^s\int_{\R^3}\<\D_\beta(\D_k\p_{p^k}f_E-\p_Qf_E),\D_\beta\p_t Q\>\phi^2\,dx dt
	\\
	&+\int_0^s\int_{\R^3}(|\D\MH(Q,\D Q)||\D R(Q)||\p_t Q|+|\D R(Q)||\MH(Q,\D Q)||\D(R^T(Q) \p_t QR(Q))|)\phi^2\,dx dt
	\\
	\leq& - \frac12\int_0^s\int_{\R^3}\p_t\Big(\p^2_{p_{ij}^kp_{mn}^l} f_E \D^2_{\beta
		l}Q_{mn}\D^2_{\beta k} Q_{ij}-\p_t\p^2_{p_{ij}^kp_{mn}^l} f_E\D^2_{\beta l}Q_{mn}\D^2_{\beta k} Q_{ij}\Big)\phi^2\,dxdt
	\\
	&+C\int_0^s\int_{\R^3}(|\D^2 Q|+|\D Q|^2)|\D^2 Q||\D
\p_t Q||\D\phi||\phi|\,dxdt
	\\
	&+C\int_0^s\int_{\R^3}|\D \MH(Q,\D Q)||\D Q||\p_tQ|\phi^2+(|\D^2 Q|+|\D Q|^2)|\D \p_t Q|\phi^2\,dxdt
\\
	&+C\int_0^s\int_{\R^3}|\MH(Q,\D Q)||\D Q|(|\D \p_t Q|+|\D Q||\p_t Q|)\phi^2\,dxdt
	\\
	\leq& \int_{\R^3}-\frac {\alpha}4|\D^2  Q(\cdot,s)|^2 \phi^2+C|\D^2  Q_{0}|^2 \phi^2\,dx+\int_0^s\int_{\R^3}(\frac{\alpha}{16}|\D^3 Q|^2+\frac
18|\D\p_t Q|^2)\phi^2\,dxdt
	\\
	&+C\int_0^s\int_{\R^3}|\D Q|^2(|\D^2 Q|^2+|\p_t Q|^2+|\D Q|^4)\phi^2+(|\D^2 Q|^2+|\D Q|^4)|\D \phi|^2\,dxdt
	.\alabel{2ord H Q_t}
\end{align*}
Here we have used that
\begin{align*}
|\D(R^T(Q)\p_t QR(Q))|\leq& C(|\D R(Q)||\p_t Q||R(Q)|+|\D\p_t Q||R(Q)|^2)
\\
\leq& C(|\D\p_t Q|+|\D Q||\p_t Q|).
\end{align*}
Using integration by parts and \eqref{eq: R5}, we have
\begin{align*}
	&\int_{\R^3}\frac1{2L}\p_t \p^2_{\tilde  Q_{ij}\tilde Q_{kl}} f_B(\tilde Q)\D  \tilde  Q_{kl}\D \tilde Q_{ij}\phi^2\,dx
	\\
	\leq&\frac C{L}\int_{\R^3}|Q-\pi(Q)|\(|\D Q||\p_t Q||\D \tilde Q|+ |\D\p_t Q||\D \tilde Q|+|\p_t Q||\D^2 \tilde Q|\)\phi^2\,dx
	\\
	&+\frac C{L}\int_{\R^3}|Q-\pi(Q)|\(|\p_t Q||\D \tilde Q|\)|\D\phi||\phi|\,dx
	\\
	\leq&\int_{\R^3}\(\frac14|\D\p_t Q|^2+\frac\lambda {16}\frac{|\D^2 (Q-\pi(Q))|^2}{L}\)\phi^2+C|\p_t Q|^2|\D Q|^2+C|\p_t Q|^2|\D\phi|^2\,dx
	\\
	&+C\int_{\R^3}\frac{|\D (Q-\pi(Q))|^2}{L}\(|\D Q|^2+\frac{|Q-\pi(Q)|^2}{L}\)\phi^2\,dx
	\\
	&+C\int_{\R^3}\frac{|Q-\pi(Q)|^2}{L}\(\frac{|Q-\pi(Q)|^2}{L}|\D Q|^2+|\D^2 Q|^2+|\D Q|^4+|\p_t Q|^2\)\phi^2\,dx
	,\alabel{2nd pt fB}
\end{align*}
where in the last inequality, we have used \eqref{k Q} with $k=2$ that
\begin{align*}
	|\D^2\tilde Q|^2 \leq & C(|\D^2(Q-\pi(Q))|^2+|\D(Q-\pi(Q))|^2|\D Q|^2
	+|Q-\pi(Q)|^2(|\D^2 Q|^2+|\D Q|^4)).
\end{align*}
Replacing $\D$ by $\p_t$ in \eqref{eq: R4}, \eqref{eq: R0}-\eqref{eq: R} and choosing $\xi=\D_\beta (\D_\beta g_B(\tilde Q)\phi^2)$, we have
\begin{align*}
	A:=&\p_t  R^T(Q) [Q-\pi(Q)]R(Q)+R^T(Q)[Q-\pi(Q)]\p_t  R(Q)
	\\
	&+\p_t  [R^T(Q)-R^T(\pi(Q))]\pi(Q) R(Q)+R^T(Q)\pi(Q)\p_t [R(Q)-R(\pi(Q))]
	\\
	&+\p_t  R^T(\pi(Q))\pi(Q)[R(Q)-R(\pi(Q))]+[R^T(Q)-R^T(\pi(Q))]\pi(Q)\p_t  R(\pi(Q)).
\end{align*}
and
\begin{align*}
	&\int_0^s\int_{\R^3}\<\frac{\D_\beta g_B(\tilde Q)}{L},\D_\beta(R^T(Q)\p_t QR(Q))\>\phi^2\,dxdt
	\\
	=&\int_0^s\int_{\R^3}\<\frac{ \D_\beta g_B(\tilde Q)}{L},\p_t \D_\beta\tilde Q\>\phi^2\,dxdt
	\\
	&+C\int_0^s\int_{\R^3}\frac{|\D^2 g_B(\tilde Q)|\phi^2+|\D g_B(\tilde Q)|\D\phi||\phi|}{L}|A|\,dxdt
	\\
	=&-\int_0^s\int_{\R^3}\frac1{2L}\p_t\(\p^2_{\tilde  Q_{ij}\tilde Q_{kl}}  f_B(\tilde Q)\D_\beta \tilde Q_{ij}\D_\beta \tilde
Q_{kl}\)\phi^2\,dxdt
	\\
	&+\int_0^s\int_{\R^3}\frac1{2L}\p_t \p^2_{\tilde  Q_{ij}\tilde Q_{kl}} f_B(\tilde Q)\D_\beta \tilde  Q_{kl}\D_\beta \tilde
Q_{ij}\phi^2\,dxdt
	\\
	&+C\int_0^s\int_{\R^3}\frac{|\D^2 g_B(\tilde Q)|\phi^2+|\D g_B(\tilde
Q)||\D\phi||\phi|}{L^\frac12}|\p_tQ|\frac{|Q-\pi(Q)|}{L^\frac12}\,dxdt
	\\
	\leq&\int_{\R^3}  -\frac \lambda8\frac{|\D (Q-\pi(Q))|^2(\cdot,s) }{L}\phi^2+C\frac{|\D ( Q_0-\pi(Q_0))|^2}{L} \phi^2\,dx
	\\
	&+C\int_{\R^3} \frac{|Q_{L,0}-\pi(Q_{L,0})|^2}{L}|\D Q_{L,0}|^2\phi^2+\frac{|Q-\pi(Q)|^2}{L}|\D Q|^2(\cdot,s)\phi^2\,dx
	\\
	&+ \int_0^s\int_{\R^3}\(\frac14|\D\p_t Q|^2+\frac\lambda {16}\frac{|\D^2(Q-\pi(Q))|^2}{L}\)\phi^2\,dxdt
	\\
	&+C\int_0^s\int_{\R^3}e(Q,v)\(|\D^2 Q|^2+|\p_t Q|^2+\frac{|\D( Q-\pi(Q))|^2}{L}+e^2(Q,v)\)\phi^2\,dxdt
	.\alabel{2ord g_B Q_t}
\end{align*}
Applying Young's inequality to the left-hand side of \eqref{2nd pt Q}, we obtain
\begin{align*}
	&-\int_{\R^3}\<\D_\beta\( R^T(Q)(\p_t Q +v\cdot \D  Q +[Q, \Omega])R(Q)\), \D_\beta (R^T(Q)\p_t  QR(Q))\>\phi^2\,dx
	\\
	\leq&- \int_{\R^3}\frac34|\D\p_t Q|^2\phi^2+C |\D^2 v|^2\phi^2-C\(|\D Q|^2(|\p_t Q|^2+|\D v|^2)+|v|^2|\D^2 Q|^2\)\phi^2  \,dx
	.\alabel{lem 2nd order p3}
\end{align*}
Substituting \eqref{2ord H Q_t}, \eqref{2ord g_B Q_t} and \eqref{lem 2nd order p3} into \eqref{2nd pt Q} yields
\begin{align*}
	&\int_{\R^3} \( \frac {\alpha}4|\D^2 Q |^2+\frac \lambda{16} \frac{|\D (Q -\pi(Q ))|^2 }{L}\)(\cdot,s) \phi^2\,dx
	+\frac 38\int^s_0\int_{\R^3} |\D\p_t Q|^2\phi^2\,dxdt
	\\
	\leq&C\int_{\R^3} \(|\D^2  Q_{0}|^2+\frac{|\D ( Q_0-\pi(Q_0))|^2}{L}+\frac{|Q_{L,0}-\pi(Q_{L,0})|^2}{L}|\D Q_{L,0}|^2\)\phi^2\,dx
	\\
	&+C\int_{\R^3} \(\frac{|Q-\pi(Q)|^2}{L}|\D Q|^2\)(\cdot,s)\phi^2\,dx+C\int_0^s\int_{\R^3} |\D^2 v|^2\phi^2\,dxdt
	\\
	&+\int_0^s\int_{\R^3}\(\frac{\alpha}{16}|\D^3 Q|^2+\frac\lambda 8\frac{|\D^2(Q-\pi(Q))|^2}{L}\)\phi^2\,dxdt
	\\
	&+C \int_0^s\int_{\R^3}\(|\D^2 Q|^2+|\D v|^2+|\p_t Q|^2\)(e(Q,v)\phi^2+|\D \phi|^2)\,dxdt
	\\
	&+C \int_0^s\int_{\R^3}\(\frac{|\D( Q-\pi(Q))|^2}{L}+e^2(Q,v)\)(e(Q,v)\phi^2+|\D \phi|^2)\,dxdt
	.\alabel{2nd Qt}
\end{align*}
Integrating \eqref{2nd Delta Q} in $t$ then adding it to  \eqref{2nd Qt}, we derive
	\begin{align*}
		&\int_{\R^3} \( |\D^2 Q|^2+\frac{|\D (Q-\pi(Q))|^2 }{L}\)(\cdot,s) \phi^2\,dx
		\\
		&+\int^s_0\int_{\R^3} \(|\D^3 Q|^2+|\D\p_t Q|^2+\frac{|\D^2 (Q-\pi(Q))|^2}{L}\)\phi^2\,dxdt
		\\
		\leq&C\int_{\R^3} \(|\D^2  Q_{0}|^2+\frac{|\D ( Q_0-\pi(Q_0))|^2}{L}+\frac{|Q_{L,0}-\pi(Q_{L,0})|^2}{L}|\D Q_{L,0}|^2\)\phi^2\,dx
		\\
		&+C\int_{\R^3} \(\frac{|Q-\pi(Q)|^2}{L}|\D Q|^2\)(\cdot,s)\phi^2\,dx+\int_0^s\int_{\R^3} C|\D^2 v|^2\phi^2\,dxdt
		\\
		&+C \int_0^s\int_{\R^3}\(|\D^2 Q|^2+|\D v|^2+|\p_t Q|^2\)(e(Q,v)\phi^2+|\D \phi|^2)\,dxdt
		\\
		&+C \int_0^s\int_{\R^3}\(\frac{|\D( Q-\pi(Q))|^2}{L}+e^2(Q,v)\)(e(Q,v)\phi^2+|\D \phi|^2)\,dxdt
		.\alabel{2ord p5}
	\end{align*}
To estimate the term $\D^2 v$ in \eqref{2ord p5}, we take $L^2$ inner product of \eqref{RBE1} with $-\Delta v\phi^2$ and calculate
\begin{align*}
	&\frac{1}{2}\frac{d}{\,dt}\int_{\R^3}|\D v|^2\phi^2\,dx+\int_{\R^3}|\D^2 v|^2\phi^2 \,dx
	\\
	=&-\int_{\R^3}2\p_t v_i \D_j v_i\D_j \phi\phi\,dx+\int_{\R^3}2(\D_j v_i\Delta v_i-\D_k v_i\D_{kj}^2v_i)\D_j \phi\phi\,dx
	\\
	&-\int_{\R^3}2(P-c^\ast)\Delta v_i\D_i\phi \phi+\(\D_j\sigma_{ij}+\D_j[Q,\MH(Q,\D
Q)]_{ij}-v\cdot \D v^i\)\Delta v_i\phi^2\,dx
	\\
	\leq& -\int_{\R^3}\D_j[Q,\MH(Q,\D Q)]_{ij}\Delta v_i \phi^2\,dx-\int_{\R^3}2\p_t v_i \D_j v_i\D_j \phi\phi\,dx
	\\
	&+\frac{1}{4}\int_{\R^3}|\D^2
v|^2\phi^2 \,dx+C\int_{\R^3}(|\D v|^2+|P-c^\ast|^2)|\D\phi|^2\,dx
	\\
	&+C\int_{\R^3}(|\D^2Q|^2+|\D Q|^4)|\D Q|^2\phi^2+|v|^2|\D v|^2\phi^2\,dx,
\end{align*}
where we have used the fact that $|\D\sigma(Q,\D Q)|\leq C(|\D^2Q|+|\D Q|^2)|\D Q|$.

By using \eqref{RBE1} and integrating by parts, we have
\begin{align*}
	&-2\int_{\R^3}\p_t v_i \D_j v_i\D_j \phi\phi\,dx
	\\
	=&2\int_{\R^3}(v_k\D_k v_i-\Delta v_i+\D_k\sigma_{ik})\D_j v_i\D_j\phi\phi\,dx+2\int_{\R^3}(P-c^\ast)\D_jv_i\D_i(\D_j\phi\phi)\,dx
	\\
	&+2\int_{\R^3}[Q,\MH(Q,\D Q)]_{ik}\Big(\D_{kj}^2v_i\D_j\phi\phi+\D_jv_i\D_k(\D_j\phi\phi)\Big)\,dx
	\\
	\leq& \frac{1}{4}\int_{\R^3}|\D^2 v|^2\phi^2
	\,dx+C\int_{\R^3}(|v|^2|\D v|^2+(|\D^2Q|^2+|\D Q|^4)|\D Q|^2)\phi^2\,dx
	\\
	&+C\int_{\R^3}(|P-c^\ast|^2+|\D v|^2+|\D^2 Q|^2+|\D Q|^4)(|\D^2\phi||\phi|+|\D\phi|^2)\,dx,
\end{align*}
which, plugging into the previous inequality, yields
\begin{align*}
	&\frac{1}{2}\frac{d}{\,dt}\int_{\R^3}|\D v|^2\,dx+\frac 12\int_{\R^3}|\D^2 v|^2\phi^2 \,dx\\
	\leq&-\int_{\R^3}\D_j[Q,\MH(Q,\D Q)]_{ij}\Delta v_i \phi^2\,dx+C\int_{\R^3}|P-c^\ast|^2(|\D^2\phi||\phi|+|\D\phi|^2)\,dx\\
	&+C\int_{\R^3}(|\D^2 Q|^2+|\D v|^2+|\D Q|^4)(|\D^2\phi||\phi|+|\D\phi|^2)\,dx\\
	&+C\int_{\R^3}(|\D Q|^2+|v|^2)(|\D^2 Q|^2+|\D v|^2 +|\D Q|^4)\phi^2\,dx
	.\alabel{2ord v}
\end{align*}
Choosing $A=Q, B=\MH(Q,\D Q)+\frac 1Lg_B(Q), F=\Delta\D v$ in  Lemma \ref{Lie}, we observe
\begin{align*}
	\<[Q , \Delta\Omega ],\MH+\frac 1Lg_B\>= \Delta\D_jv_i[Q,\MH]_{ij}
	.\alabel{Lie 1}
\end{align*}
Then, integrating by parts and using \eqref{Lie 1} and \eqref{RBE3} on the term $(\MH(Q,\D Q)+\frac 1Lg_B(Q))$, we have
\begin{align*}
	&\int_{\R^3}\<\D_\beta\(R^T[Q, \Omega]R\),\D_\beta\(R^T(Q)(\MH(Q,\D Q)+\frac 1Lg_B(Q))R(Q)\)\>\phi^2\,dx
	\\
	\leq& -\int_{\R^3}\<[\Delta Q , \Omega ]+2[\D_\beta Q , \D_\beta\Omega ]+[ Q , \Delta\Omega ],\MH(Q,\D Q) +\frac 1Lg_B(Q)\>\phi^2\,dx
	\\
	&+C\int_{\R^3}|\D[Q,\Omega]|\Big|\MH(Q,\D Q) +\frac 1Lg_B(Q)\Big|(|\D Q|\phi^2+|\D \phi||\phi|)\,dx
	\\
	&+C\int_{\R^3}|\D Q||\D v|\Big|\D\(R^T(Q)\big(\MH(Q,\D Q) +\frac 1Lg_B(Q)\big)R(Q)\)\Big| \phi^2\,dx
	\\
	\leq& \int_{\R^3}\Big(\Delta v_i\D_j[ Q ,\MH(Q,\D Q) ]_{ij}+\eta|\D\p_t Q|^2+\frac 14|\D^2 v|^2\Big) \phi^2\,dx
	\\
	&+C\int_{\R^3} (|\D^2 Q|^2+|\D v|^2+|\p_t Q|^2+|\D Q|^4+|v|^4)((|\D Q|^2+|v|^2)\phi^2+|\D \phi|^2)\,dx
	\\
	&+\frac 14\int_{\R^3}\Big|\D\(R^T(Q)\big(\MH(Q,\D Q) +\frac 1Lg_B(Q)\big)R(Q)\)\Big|^2 \phi^2\,dx
	.
	\alabel{2ord Lie}
\end{align*}
Here we used that $|[Q,\MH(Q,\D Q)]|^2|\D\phi|^2\leq C(|\D^2 Q|^2+|\D Q|^4)|\D\phi|^2$.

We differentiate \eqref{ROT RBE} with respect to $x_\beta$, multiply the resulting expression by $\D_\beta \(R^T(Q)\big(\MH(Q,\D Q) +\frac 1Lg_B(Q)\big)R(Q)\)$ and
substitute \eqref{2ord Lie} to find
\begin{align*}
	&\int_{\R^3}\left|\D \(R^T(Q)\big(\MH(Q,\D Q) +\frac 1Lg_B(Q)\big)R(Q)\)\right|^2\phi^2\,dx
	\\
	\leq&  \int_{\R^3}\<\D_\beta\( R^T(Q)\p_t QR(Q)\), \D_\beta \(R^T(Q)(\MH(Q,\D Q)+\frac 1Lg_B(Q))R(Q)\)\>\phi^2\,dx
	\\
	 &+\int_{\R^3}\Big(\Delta v_i\D_j[ Q ,\MH(Q,\D Q) ]_{ij}+\eta|\D\p_t Q|^2+\frac 14|\D^2 v|^2\Big) \phi^2\,dx
	\\
	&+C\int_{\R^3} (|\D^2 Q|^2+|\D v|^2+|\p_t Q|^2+|\D Q|^4+|v|^4)((|\D Q|^2+|v|^2)\phi^2+|\D \phi|^2)\,dx
	\\
	&+\frac14\int_{\R^3}\left|\D \(R^T(Q)\big(\MH(Q,\D Q) +\frac 1Lg_B(Q)\big)R(Q)\)\right|^2\phi^2\,dx
	.\alabel{2ord can}
\end{align*}
Combining \eqref{2ord v} with \eqref{2ord can} and integrating  in $t$, it follows from the arguments in \eqref{2ord H Q_t}-\eqref{2ord g_B Q_t} that
\begin{align*}
	&\int_{\R^3}\(\frac {\alpha}4|\D^2  Q |^2+\frac 12 |\D v |^2+\frac \lambda8 \frac{|\D ( Q-\pi(Q)|^2}{L}\) (\cdot,s)\phi^2\,dx
	+\frac14\int_0^s\int_{\R^3} |\D^2 v|^2 \phi^2 \,dxdt
	\\
	\leq& C\int_{\R^3}\(|\D^2  Q_{0}|^2+|\D v_0|^2+\frac{|\D ( Q_0-\pi(Q_0))|^2}{L}\)\phi^2\,dx
	\\
	&+C\int_{\R^3} \(\frac{|Q_{L,0}-\pi(Q_{L,0})|^2}{L}|\D Q_{L,0}|^2+\frac{|Q-\pi(Q)|^2}{L}|\D Q|^2(\cdot,s)\)\phi^2\,dx
	\\
	&+\int_{\R^3}2\eta\Big(|\D^3 Q|^2+|\D\p_t Q|^2+\frac{|\D^2 (Q-\pi(Q))|^2}{L}\Big) \phi^2\,dx
	\\
	&+C\int_{\R^3} (|\D^2 Q|^2+|\D v|^2+|\p_t Q|^2)(e(Q,v)\phi^2+|\D \phi|^2)\,dxdt
	\\
	&+C\int_0^s\int_{\R^3}\(\frac{|\D( Q-\pi(Q))|^2}{L}+e^2(Q,v)\)(e(Q,v)\phi^2+|\D \phi|^2)\,dxdt
	.\alabel{2ord p2.2}
\end{align*}
Combining  \eqref{2ord p5} with \eqref{2ord p2.2} and choosing suitable $\eta$, we prove  \eqref{2ord eq}.
\end{proof}

\section{Proof of Theorem  \ref{thm 2}}
In this section, we prove Theorem  \ref{thm 2}. At first,
we derive   a local estimate on the pressure $P_L(x,t)$.
\begin{lemma}\label{lem pressure estimate}
Let $(Q_L, v_L)$ be a strong solution to \eqref{RBE1}-\eqref{RBE3} in $\R^3\times(T_0, T_L)$. Assume that $Q\in S_\delta$ with  sufficiently
small $\delta$  on  $\R^3\times(0, T_L)$ and
\begin{equation}\label{pressure est eq}
	\sup_{T_0\leq t\leq T_L,x_0\in\R^3}\int_{B_R(x_0)}\(|\D Q_L|^3+|v_L |^3\)(\cdot,t)\,dx\leq \varepsilon_0^3.
\end{equation}
Then for any $t\in(T_0,T_L)$, there exists a constant  $c^*_L(t)\in \R$ such that the pressure $P_L$ satisfies the following estimate
\begin{align*}
	&\sup_{x_0\in\R^3}\int_{T_0}^{T_L}\int_{B_{2R}(x_0)}|P_L-c_L^\ast|^2\,dxdt
	\\
	\leq& C\sup_{y\in\R^3}\int_{T_0}^{T_L}\int_{B_{R}(y)}(|\D^2 Q_L|^2+|\D v_L|^2)+\frac{\varepsilon_0^2}{R^2}(|\D
Q_L|^2+|v_L|^2)\,dxdt.\alabel{pressure estimate.1}
\end{align*}
\end{lemma}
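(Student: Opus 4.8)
The plan is to recover $P_L$ from the divergence of the momentum equation and to estimate it by the standard splitting into a Calder\'on--Zygmund part and a far-field harmonic part, the hypothesis \eqref{pressure est eq} being used to close both. Taking $\div$ of \eqref{RBE1} and using $\div v_L=0$ together with the skew-symmetry of $[Q_L,\MH(Q_L,\D Q_L)]$ (which forces $\D_i\D_j[Q_L,\MH]_{ij}=0$), one obtains the Poisson equation $\Delta P_L=\D_i\D_jG_{ij}$ on $\R^3$, where $G_{ij}:=\sigma_{ij}(Q_L,\D Q_L)-v_{L,i}v_{L,j}$. Since $\xi=0$, the stress $\sigma_{ij}=-\p_{p^j_{kl}}f_E\,\D_iQ_{L,kl}$ is quadratic in $\D Q_L$, and since $Q_L\in S_\delta$ is bounded, \eqref{sec2 f_1} gives the pointwise bound $|G|\le C(|\D Q_L|^2+|v_L|^2)$; in particular $G(\cdot,t)\in L^{3/2}(\R^3)\cap L^2_\loc$ by $H^1\hookrightarrow L^3$. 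Solving by the Newtonian potential, $P_L(\cdot,t)$ equals, up to an additive function of $t$ which is absorbed into $c_L^*(t)$, $-\D_i\D_j(-\Delta)^{-1}G_{ij}(\cdot,t)$; the operator $\D_i\D_j(-\Delta)^{-1}$ is bounded on $L^2(\R^3)$ and on $L^{3/2}(\R^3)$, with kernel $\mathcal{K}$ satisfying $|\mathcal{K}(z)|\le C|z|^{-3}$ and $|\D\mathcal{K}(z)|\le C|z|^{-4}$. I then fix $x_0\in\R^3$, choose $\eta\in C_0^\infty(B_{5R}(x_0))$ with $\eta\equiv1$ on $B_{4R}(x_0)$ and $|\D^k\eta|\le CR^{-k}$, and write $P_L=P_{L,1}+P_{L,2}$ with $P_{L,1}:=-\D_i\D_j(-\Delta)^{-1}(\eta G_{ij})$ and $P_{L,2}:=-\D_i\D_j(-\Delta)^{-1}((1-\eta)G_{ij})$.

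For $P_{L,1}$, the $L^2$-boundedness of $\D_i\D_j(-\Delta)^{-1}$ gives $\|P_{L,1}(\cdot,t)\|_{L^2(\R^3)}^2\le C\|G(\cdot,t)\|_{L^2(B_{5R}(x_0))}^2\le C\big(\|\D Q_L\|_{L^4(B_{5R}(x_0))}^4+\|v_L\|_{L^4(B_{5R}(x_0))}^4\big)$. The interpolation inequality $\|f\|_{L^4(B_\rho)}^2\le C\|f\|_{L^3(B_\rho)}\big(\|\D f\|_{L^2(B_\rho)}+\rho^{-1}\|f\|_{L^2(B_\rho)}\big)$, together with \eqref{pressure est eq} (which makes $\|\D Q_L\|_{L^3(B_{5R}(x_0))}+\|v_L\|_{L^3(B_{5R}(x_0))}\le C\varepsilon_0$), bounds $\|P_{L,1}(\cdot,t)\|_{L^2(\R^3)}^2$ by $C\varepsilon_0^2$ times $\|\D^2Q_L\|_{L^2(B_{5R}(x_0))}^2+\|\D v_L\|_{L^2(B_{5R}(x_0))}^2+R^{-2}\big(\|\D Q_L\|_{L^2(B_{5R}(x_0))}^2+\|v_L\|_{L^2(B_{5R}(x_0))}^2\big)$. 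Integrating over $t\in(T_0,T_L)$ and covering $B_{5R}(x_0)$ by a bounded number of balls $B_R(y_k)$ — which are fixed in $t$, so the spatial sum passes through the time integral — bounds this contribution by the right side of \eqref{pressure estimate.1}.

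For $P_{L,2}$, on $B_{2R}(x_0)$ the kernel $\mathcal{K}$ is evaluated only at points of $\{\eta\neq1\}\subset\R^3\setminus B_{4R}(x_0)$, away from its singularity, with $|\D_x\mathcal{K}(x-y)|\le C|x-y|^{-4}\le C|x_0-y|^{-4}$ there. Decomposing $\R^3\setminus B_{4R}(x_0)$ into dyadic annuli $A_m$ ($m\ge0$, inner radius $\sim 2^mR$), covering each $A_m$ by $O(2^{3m})$ fixed balls $B_R(y_k^{(m)})$, and invoking \eqref{pressure est eq} in the H\"older form $\int_{B_R(y)}(|\D Q_L|^2+|v_L|^2)(\cdot,t)\,dx\le CR\varepsilon_0^2$, one gets $\sup_{B_{2R}(x_0)}|\D P_{L,2}(\cdot,t)|\le CR^{-4}\,\Sigma(t)$ with $\Sigma(t):=\sum_{m\ge0}2^{-4m}\sum_k\int_{B_R(y_k^{(m)})}(|\D Q_L|^2+|v_L|^2)(\cdot,t)\,dx$, and $\Sigma(t)\le C\sup_y\int_{B_R(y)}(|\D Q_L|^2+|v_L|^2)(\cdot,t)\le CR\varepsilon_0^2$. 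Choosing $c_L^*(t)$ to be the mean of $P_{L,2}(\cdot,t)$ over $B_{2R}(x_0)$ (so $c_L^*$ may be taken to depend on the ball in question), Poincar\'e's inequality gives $\int_{B_{2R}(x_0)}|P_{L,2}-c_L^*|^2(\cdot,t)\,dx\le CR^5\big(\sup_{B_{2R}(x_0)}|\D P_{L,2}(\cdot,t)|\big)^2\le CR^{-3}\Sigma(t)^2\le C\varepsilon_0^2R^{-2}\Sigma(t)$. Integrating in $t$ and interchanging the finite sum over $y_k^{(m)}$ with the $t$-integral — again legitimate since these balls do not depend on $t$ — bounds this by $C\varepsilon_0^2R^{-2}\sup_y\int_{T_0}^{T_L}\int_{B_R(y)}(|\D Q_L|^2+|v_L|^2)$. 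Adding the two contributions and taking $\sup_{x_0}$ yields \eqref{pressure estimate.1}.

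The delicate step is the far-field piece $P_{L,2}$: a naive cut-off produces a function harmonic on $B_{4R}(x_0)$ whose oscillation on $B_{2R}(x_0)$ one is tempted to control by its oscillation on a larger ball, which is circular and reintroduces $P_L$ on a bigger set. The way out is to keep the \emph{global} Riesz representation of $P_L$, so that $P_{L,2}$ has an explicit convolution form on $B_{2R}(x_0)$ with a smooth, fast-decaying kernel; the gain then comes entirely from \eqref{pressure est eq}, used twice — once to sum the dyadic tails of the $|z|^{-4}$ kernel $\D\mathcal{K}$ against the locally finite $L^3$-mass of $(\D Q_L,v_L)$, and once more to convert $\Sigma(t)^2$ into $\varepsilon_0^2\Sigma(t)$ — together with the fact that all covering balls are fixed in time, which is exactly what allows the final bound to involve $\sup_y\int_{T_0}^{T_L}\int_{B_R(y)}$ rather than the larger $\int_{T_0}^{T_L}\sup_y\int_{B_R(y)}$.
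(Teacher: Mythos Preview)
Your proof is correct and follows the same overall strategy as the paper: Riesz representation of the pressure, a near-field/far-field decomposition, Calder\'on--Zygmund boundedness for the near piece, and kernel decay combined with a dyadic covering for the far piece. The technical execution differs in a few places worth noting. First, you observe that the antisymmetric term $[Q_L,\MH]_{ij}$ satisfies $\D_i\D_j[Q_L,\MH]_{ij}=0$ and therefore drop it from the source; the paper keeps it, which is why its $F^{ij}$ carries an extra $|\D^2 Q_L|$ contribution that yours avoids. This is a genuine simplification on your side: your $G$ is purely quadratic in $(\D Q_L,v_L)$, so your near-field bound picks up an additional factor $\varepsilon_0^2$ on the $|\D^2Q_L|^2$ term, which is harmless (indeed slightly sharper). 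Second, you split by cutting off the source directly ($\eta G$ vs.\ $(1-\eta)G$), whereas the paper uses the commutator $[\phi,\mathcal R_i\mathcal R_j]$ and then splits the resulting kernel integral into a local part handled by Hardy--Littlewood--Sobolev and a far part. Third, for the far-field the paper subtracts the value of the Riesz kernel at the center $x_0$ to obtain decay and then sums annuli in $L^2$; you instead bound $\sup_{B_{2R}(x_0)}|\D P_{L,2}|$ pointwise via the $|z|^{-4}$ decay of $\D\mathcal K$ and apply Poincar\'e. Both routes convert $\Sigma(t)^2$ into $\varepsilon_0^2\Sigma(t)$ via the same use of \eqref{pressure est eq}. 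The two arguments are equivalent; your version is arguably cleaner because the source is smaller, while the paper's commutator formulation is closer to the standard local pressure estimates in the Navier--Stokes literature.
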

\begin{proof} The proof is essentially the same as the proof of Lemma 2.4 in \cite{FHM}. For completeness, we outline an approach here. Let $\phi$ be a
cut-off function satisfying
	$0\leq \phi\leq 1$, $\text{supp } \phi\subset B_{2R}(x_0)$ for some $x_0\in \R^3$ and $|\D \phi|\leq\frac{C}{R}$. Note that the pressure
$P_L$ satisfies
\begin{equation*}
	-\Delta P_L=\D_{ij}^2\([Q_L,H(Q_L,\D Q_L)]_{ij}-\sigma_{ij}(Q_L,\D Q_L) +v^i_Lv^j_L\)\quad\text{on}~~\R^3\times[T_0,T_L],
\end{equation*}
which implies $P_L=\mathcal{R}_i\mathcal{R}_j(F^{ij}),$ and \[|F^{ij}|=|[Q_L,H(Q_L,\D Q_L)]_{ij}-\sigma_{ij}(Q_L,\D Q_L) +v^i_Lv^j_L|\leq
C(|\D^2 Q_L|+|\D Q_L|^2+|v_L|^2),\] where $\mathcal{R}_i$ is the $i$-th Riesz transform on $\R^3$. Then  we have
\begin{equation}\label{pressure estimateP1.1}
	(P_L-c_L^\ast)\phi=\mathcal{R}_i\mathcal{R}_j(F^{ij}\phi)+[\phi,\mathcal{R}_i\mathcal{R}_j](F^{ij})-c_L^\ast\phi
\end{equation}
for a cut-off function $\phi$, where the commutator $[\phi,\mathcal{R}_i\mathcal{R}_j]$ is defined by
\[[\phi,\mathcal{R}_i\mathcal{R}_j](\cdot)=\phi\mathcal{R}_i\mathcal{R}_j(\cdot)-\mathcal{R}_i\mathcal{R}_j(\cdot\,\phi).\]
By using the Riesz operator maps $L^q$ into $L^q$
spaces for any $1<q<+\infty$ and the assumption \eqref{pressure est eq}, we have
\begin{align*}
	&\int_{T_0}^{T_L}\int_{\R^3}|\mathcal{R}_i\mathcal{R}_j(F^{ij}\phi)|^2\,dxdt\\
	\leq& C \int_{T_0}^{T_L}\int_{B_{2R(x_0)}}|\D^2 Q_L|^2+|\D v_L|^2dx\,dt+\frac{C}{R^2}\int_{T_0}^{T_L}\int_{B_{2R(x_0)}}|\D
	Q_L|^2+|v_L|^2\,dxdt.\alabel{pressure estimateP1.2}
\end{align*}
Since $\text{supp}\, \phi\subset B_{2R(x_0)}$, the commutator can be expressed as
\begin{align*}
	&[\phi,\mathcal{R}_i\mathcal{R}_j](F^{ij})(x,t)-c_L^\ast(t)\phi(x)\\
	=&\int_{B_{4R}(x_0)}\frac{(\phi(x)-\phi(y))(x_i-y_i)(x_j-y_j)}{|x-y|^5}F^{ij}(y,t)\,dy\\
	&\quad+\phi(x)\left[\int_{\R^3\backslash B_{4R}(x_0)}\frac{(x_i-y_i)(x_j-y_j)}{|x-y|^5}F^{ij}(y,t)\,dy-c_L(t)\right]\\
	=&:f_1(x,t)+f_2(x,t).\alabel{pressure estimateP2.1}
\end{align*}
By using the Hardy-Littlewood-Sobolev inequality  (cf. \cite{HM})
\[\left\|  \int_{\R^n}\frac{f(y)}{|x-y|^{n-\alpha}}\,dy\right\|_{L^q(\R^n)}\leq
C\|f\|_{L^r(\R^n)},\quad\frac{1}{q}=\frac{1}{r}-\frac{\alpha}{n}\] and the H\"older inequality,   a standard covering argument yields
\begin{align*}
	&\int_{T_0}^{T_L}\int_{\R^3}|f_1(\cdot,s)|^2\,dxdt\leq CR^{-2}\int_{T_0}^{T_L}\|(F^{ij})\chi_{B_{4R(x_0)}}\|_{L^\frac{6}{5}(\R^3)}^2\,dt
	\\
	\leq& \frac C{R^2}\int_{T_0}^{T_L}\|(|\D Q_L|+|v_L|)\chi_{B_{4R}(x_0)}\|_{L^3(\R^3)}^2\|(|\D
	Q_L|+|v_L|)\chi_{B_{4R}(x_0)}\|_{L^2(\R^3)}^2\,dt
	\\
	&+ \frac C{R^2}\int_{T_0}^{T_L}\|\chi_{B_{4R}(x_0)}\|^2_{L^3(\R^3)}\|(|\D^2 Q_L|)\chi_{B_{4R}(x_0)}\|_{L^2(\R^3)}^2\,dt
	\\
	\leq&\frac{C\varepsilon_0^2}{R^2}\int_{T_0}^{T_L}\int_{B_{4R}(x_0)}|\D Q_L|^2+|v_L|^2\,dxdt+C\int_{T_0}^{T_L}\int_{B_{4R}(x_0)}|\D^2
	Q_L|^2\,dxdt,\alabel{pressure estimateP2.2}
\end{align*}
where $\chi_{B_{4R}(x_0)}(x)=1$ for $x\in B_{4R}(x_0)$ and $0$  for $x\in \R^3\backslash B_{4R}(x_0)$.   Choosing
\[c_L^\ast(t)=\int_{\R^3\backslash B_{4R}(x_0)}\frac{(x_{0i}-y_i)(x_{0j}-y_j)}{|x_0-y|^5}F^{ij}(y,t)\,dy\] and using the H\"older inequality,
we estimate
\begin{align*}
	&\int_{T_0}^{T_L}\int_{\R^3}|f_2(z,s)|^2\,dzdt
	\\
	\leq& CR^{5}\int_{T_0}^{T_L}\left|\sum^\infty_{k=4}\frac{C}{(kR)^4}\int_{B_{(k+1)R(x_0)}\backslash B_{kR(x_0)}}F^{ij}(x,t)\,dx\right|^2
	\,dt
	\\
	\leq& C\sup_{y\in\R^3}\int_{T_0}^{T_L}\sum^\infty_{k=4}k^{-4}\int_{B_{R(y)}}|F^{ij}|^2\,dxdt
	\\
	\leq& C \sup_{y\in\R^3}\int_{T_0}^{T_L}\int_{B_{R}(y)}\frac{\varepsilon_0^2}{R^2}(|\D Q_L|^2+|v_L|^2)+(|\D^2 Q_L|^2+|\D v_L|^2)\,dxdt
	.\alabel{pressure estimateP2.3}
\end{align*}
Combining \eqref{pressure estimateP1.2}, \eqref{pressure estimateP2.2} with \eqref{pressure estimateP2.3}, we can apply a standard covering
argument  to complete the proof.
\end{proof}

 Using Lemma \ref{lem 1ord} and Lemma \ref{lem 2ord},  we have:
\begin{lemma}\label{L3 small}
Let $(Q_L,v_L)$ be a strong solution of $\eqref{RBE1}-\eqref{RBE3}$ in $\R^3 \times [T_0,T_L)$ with initial value $(Q_{L,T_0},v_{L,T_0})\in
H^2_{Q_e}(\R^3;S_0)\times H^1(\R^3;\R^3)$ and $\div v=0$. Assume that $Q\in S_\delta$ for   sufficiently small $\delta$  on  $\R^3\times(0,
T_L)$. There exist two  constants  $\varepsilon_0$ and $R$ that
\begin{align}\label{L3  cond}
	\sup_{T_0\leq t\leq T_L,x_0\in\R^3}\int_{B_{R}(x_0)}|\D Q_L|^3+|v_L|^3+\frac{|Q_L-\pi(Q_L)|^3}{L^{\frac32}}\,dx\leq\varepsilon_0^3.
\end{align}
Then we have
\begin{align*}
	&\sup_{T_0\leq s\leq T_L,x_0\in\R^3}\frac{1}{R}\int_{B_{R}(x_0)}\(|\D Q_L |^2+ |v_L |^2+\frac{|Q_{L}-\pi(Q_{L})|^2}{L}\)(\cdot,s)\,dx
	\\
	&+\sup_{x_0\in\R^3}\frac{1}{R}\int^{T_L}_{T_0}\int_{B_{R}(x_0)}|\D^2 Q_L|^2+|\D v_L|^2 +|\p_t Q_L|^2 +\frac{|\D(Q_L-\pi(Q_L))|^2}{L}\,dxdt
	\\
	\leq&  \frac{C}{R}\sup_{x_0\in\R^3}\int_{B_{R}(x_0)}|\D Q_{L,T_0}|^2+| v_{L,T_0}|^2+\frac{|Q_{L,T_0}-\pi(Q_{L,T_0})|^2}{L} \,dx
	+C\varepsilon_0^2\frac{(T_L-T_0)}{R^2}
	\alabel{L3 1st order}
\end{align*}
and
\begin{align*}
	&\sup_{T_0\leq s\leq T_L,x_0\in\R^3}R\int_{B_{R}(x_0)}\(|\D^2 Q_L|^2+|\D v_L|^2+\frac{|\D(Q_L-\pi(Q_L))|^2}{L} \)(\cdot,s)\,dx
	\\
	&+\sup_{x_0\in\R^3}R\int^{T_L}_{T_0}\int_{B_{R}(x_0)}|\D^3Q_L|^2+|\D^2v_L|^2+|\D\p_t Q_L|^2 +\frac{|\D^2(Q_L-\pi(Q_L))|^2}{L}\,dxdt
	\\
	\leq& CR\sup_{x_0\in\R^3}\int_{B_{R}(x_0)} |\D^2 Q_{L,T_0}|^2+ |\D v_{L,T_0}|^2+\frac{|\D  (Q_{L,T_0}-\pi(Q_{L,T_0}))|^2}{L}  \,dxdt
	\\
	&+\frac{C}{R}\sup_{x_0\in\R^3}\int_{B_{R}(x_0)}|\D Q_{L,T_0}|^2+| v_{L,T_0}|^2+\frac{|Q_{L,T_0}-\pi(Q_{L,T_0})|^2}{L} \,dx
	+C\varepsilon_0^2\frac{(T_L-T_0)}{R^2}
	.\alabel{L3 2nd ord}
\end{align*}
\end{lemma}

\begin{proof} Let $\{B_R(x_i)\}^{\infty}_{i=1}$ be a standard open cover of $\R^3$ such that at each $x\in\R^3$,  there are finite
intersections of open balls  $B_{R}(x_i)$.
	Let $\phi\in C_0^\infty(B_{2R}(x_0))$ with $\phi\equiv 1$ on $B_{R}(x_0)$, $|\D \phi|\leq \frac{C}{R}$ and $|\D^2 \phi|\leq
\frac{C}{R^2}$. Recall from Lemma \ref{lem 1ord} that
	\begin{align*}
		&\frac1R\int_{B_{R}(x_0)}\(|\D Q_L|^2+ |v_L|^2+\frac{|Q_L -\pi(Q_L)|^2}{L}\)(\cdot,s) \,dx
		\\
		&+\frac1R\int_{T_0}^{T_L}\int_{B_{R}(x_0)}|\D^2 Q_L|^2+|\p_t Q_L|^2+|\D v_L|^2+\frac{|\D (Q_L -\pi(Q_L ))|^2}{L}\,dxdt
		\\
		\leq& \frac{C}{R}\int_{B_{2R}(x_0)}|\D Q_{L,T_0}|^2+ |v_{L,T_0}|^2+\frac{|Q_{L,T_0}-\pi(Q_{L,T_0})|^2}{L}\,dx
		\\
		&+ \frac{C}{R}\int_{T_0}^{T_L}\int_{B_{2R}(x_0)}|\D Q_L|^2\(|\D Q_L|^2+|v_L|^2+\frac{|Q_L -\pi(Q_L)|^2}{L}\)\,dxdt
		\\
		&+\frac{\eta}{R}\int_{T_0}^{T_L}\int_{B_{2R}(x_0)}|P_L-c_L^*(t)|^2\,dxdt+\frac{C(\eta)}{R^3}\int_{T_0}^{T_L}\int_{B_{2R}(x_0)}|\D
Q_L|^2+ |v_L|^2 \,dxdt
		\alabel{1ord eq L3}
	\end{align*}
for some small $\eta$ to be chosen later. Using H\"older's inequality and \eqref{L3  cond}, we have
\begin{align*}
	& \frac1R\int^{T_L}_{T_0}\int_{B_{2R}(x_0)}\(|\D Q_L|^2+|v_L|^2 +\frac{|Q_L -\pi(Q_L)|^2}{L}\) \,dxdt
	\leq  C\varepsilon_0^2(T_L-T_0).\alabel{L3 L2}
\end{align*}
Then, using  the Sobolev inequality, \eqref{L3  cond} and \eqref{L3 L2}, we find
\begin{align*}
	&\sup_{ x_0\in\R^3}\frac1R\int^{T_L}_{T_0}\int_{B_{2R}(x_0)}|\D Q_L|^2|\D Q_L|^2\,dxdt
	\\
	\leq&\frac{C}{R} \sup_{ x_0\in\R^3}\int_{T_0}^{T_L}\sum_{i}\(\int_{B_R(x_i)}|\D Q_L|^3 \,dx\)^{\frac23}\(\int_{B_R(x_i)}|\D
Q_L|^6\,dx\)^{\frac13}\,dt
	\\
	\leq&  \frac{C\varepsilon_0^2}{R}\sup_{ y\in\R^3}\int^{T_L}_{T_0}\int_{B_{R}(y)}|\D^2 Q_L|^2\,dxdt+\frac{C}{R}\sup_{T_0\leq s\leq
T_L,y\in\R^3}\int^{T_L}_{T_0} \int_{B_R(y)}|\D Q_L|^2\,dxdt
	\\
	\leq&  \frac{C\varepsilon_0^2}{R}\sup_{ y\in\R^3}\int^{T_L}_{T_0}\int_{B_{R}(y)}|\D^2 Q_L|^2\,dxdt+C\varepsilon_0^2\frac{T_L-T_0}{R^2}
	.\alabel{L3}
\end{align*}
Similarly, we obtain
\begin{align*}
	&\sup_{ x_0\in\R^3}\frac1R\int^{T_L}_{T_0}\int_{B_{2R}(x_0)}|\D Q_L|^2\(|v_L|^2+\frac{|Q_L -\pi(Q_L)|^2}{L}\)\,dxdt
	\\
	\leq&  \frac{C\varepsilon_0^2}{R}\sup_{ y\in\R^3}\int^{T_L}_{T_0}\int_{B_{R}(y)}|\D
v_L|^2+\frac{|\D(Q_L-\pi(Q_L))|^2}{L}\,dxdt+C\varepsilon_0^2\frac{T_L-T_0}{R^2}
	.\alabel{L3  L4 part 1}
\end{align*}
Substituting \eqref{L3 L2}-\eqref{L3  L4 part 1} into \eqref{1ord eq L3}, using Lemma \ref{lem pressure estimate} and taking the
supremum of $x_0\in \R^3$, we prove \eqref{L3 1st order} by choosing $\eta$ sufficiently small.

To show \eqref{L3 2nd ord}, recall from Lemma \ref{lem 2ord} that
\begin{align*}
	&R\int_{B_R(x_0)}\(|\D^2 Q_L|^2+ |\D v_L |^2+\frac{|\D (Q_L -\pi(Q_L ))|^2}{L}\)(x,T_L)\,dx
	\\
	&+R\int_{T_0}^{T_L}\int_{B_R(x_0)}|\D^3 Q_L|^2+|\D\p_t Q_L|^2+|\D^2 v_L|^2+\frac{|\D^2 (Q_L -\pi(Q_L ))|^2}{L}\,dxdt
	\\
	\leq& CR\int_{B_{2R}(x_0)} |\D^2 Q_{L,T_0}|^2+ |\D v_{L,T_0}|^2+\frac{|\D  (Q_{L,T_0}-\pi(Q_{L,T_0}))|^2}{L} \,dx
	\\
	&+CR\int_{B_{2R}(x_0)} \frac{|Q_{L,0}-\pi(Q_{L,0})|^2}{L}|\D Q_{L,0}|^2+ \(\frac{|Q-\pi(Q)|^2}{L}|\D Q|^2\)(x,T_L)\,dx
	\\
	&+CR\int_{T_0}^{T_L}\int_{B_{2R}(x_0)}e(Q_L,v_L)\(|\D^2 Q_L|^2+|\D v_L|^2+|\p_tQ_L|^2\)\,dxdt
	\\
	&+CR\int_{T_0}^{T_L}\int_{B_{2R}(x_0)}e(Q_L,v_L)\(\frac{|\D(Q_L-\pi(Q_L))|^2}{L}+e^2(Q_L,v_L)\)\,dxdt
	\\
	&+\frac{C}{R}\int_{T_0}^{T_L}\int_{B_{2R}(x_0)}|\D^2 Q_L|^2+|\D v_L|^2+|\p_tQ_L|^2+\frac{| \D (Q-\pi(Q))|^2}{L}\,dxdt
	\\
	&+\frac{C}{R}\int_{T_0}^{T_L}\int_{B_{2R}(x_0)}e^2(Q_L,v_L)+|P_L-c_L^*(t)|^2\,dxdt
	\\
	\leq& CR\int_{B_{2R}(x_0)} |\D^2 Q_{L,T_0}|^2+ |\D v_{L,T_0}|^2+\frac{|\D  (Q_{L,T_0}-\pi(Q_{L,T_0}))|^2}{L} \,dx
	\\
	&+C\varepsilon_0^2R\sup_{ y\in\R^3}\int_{B_{R}(y)}|\D^2 Q_{L,T_0}|^2 +|\D^2 Q_{L}(x,T_L)|^2 \,dx
	\\
	&+\frac{C}{R}\sup_{y\in\R^3}\int_{B_{R}(y)}|\D Q_{L,T_0}|^2+| v_{L,T_0}|^2+\frac{|Q_{L,T_0}-\pi(Q_{L,T_0})|^2}{L} \,dx
	\\
	&+C\varepsilon_0^2R\sup_{ y\in\R^3}\int_{B_{R}(y)}\int_{T_0}^{T_L}|\D^3 Q_L|^2+|\D^2
v_L|^2+|\D\p_tQ_L|^2+\frac{|\D^2(Q_L-\pi(Q_L))|^2}{L}\,dxdt
	\\
	&+\varepsilon_0^2\frac{C(T_L-T_0)}{R^2}+CR\int_{T_0}^{T_L}\int_{B_{2R}(x_0)}e^3(Q_L,v_L)\,dxdt
	.\alabel{2ord eq L3}
\end{align*}
Here we used the argument in \eqref{L3}, Lemma \ref{lem pressure estimate} and substituted \eqref{L3 1st order}. Using the Sobolev inequality, we deduce the last term in \eqref{2ord eq L3} to
\begin{align*}
	&R\int_{T_0}^{T_L}\int_{B_{2R}(x_0)}e^3(Q_L,v_L)\,dxdt
	\\
	\leq&CR\int_{T_0}^{T_L}\int_{B_{2R}(x_0)}e(Q_L,v_L)\(|\D Q_L|^4+|v_L|^4+\frac{|Q_L -\pi(Q_L)|^4}{L^2}\)\,dxdt
	\\
	\
	\leq&C\varepsilon_0^2R\sup_{ y\in\R^3}\int^{T_L}_{T_0}\int_{B_{R}(y)} |\D|\D Q_L|^2|^2+|\D|v_L|^2|^2+\frac{|\D|Q_L
-\pi(Q_L)|^2|^2}{L^2}\,dxdt
	\\
	&+\frac{C\varepsilon_0^2}{R}\sup_{ y\in\R^3}\int^{T_L}_{T_0}\int_{B_{R}(y)} |\D Q_L|^4+|v_L|^4+\frac{|Q_L -\pi(Q_L)|^4}{L^2}\,dxdt
	\\
	\leq& C\varepsilon_0^2R\sup_{y\in\R^3}\int^{T_L}_{T_0}\int_{B_{R}(y)}|\D^3 Q_L|^2+|\D^2 v_L|^2+\frac{|\D^2 (Q_L -\pi(Q_L ))|^2}{L} \,dxdt
	\\
	&+\frac{C}{R}\sup_{x_0\in\R^3}\int_{B_{R}(x_0)}|\D Q_{L,T_0}|^2+|
v_{L,T_0}|^2+\frac{|Q_{L,T_0}-\pi(Q_{L,T_0})|^2}{L}\,dx+\varepsilon_0^2\frac{C(T_L-T_0)}{R^2}
	.\alabel{L3  L4 part 3}
\end{align*}
Here we also used \eqref{L3 1st order}. Substituting \eqref{L3  L4 part 3} into \eqref{2ord eq L3} and then taking the supremum of
$x_0\in \R^3$ on the resulting expression, we obtain \eqref{L3 2nd ord}.
\end{proof}

Using the Gagliardo-Nirenberg interpolation, we establish  a uniform  local existence of the strong solutions:
\begin{prop}\label{prop Extension}
Assume that $(Q_{L,T_0}, v_{L,T_0})$  satisfies
\begin{align}\label{prop eq}
	\|Q_{L,T_0}\|^2_{H^1_{Q_e}(\R^3)}+\|v_{L,T_0}\|^2_{H^1(\R^3)}+
	\frac {\|Q_{L,T_0}-\pi(Q_{L,T_0})\|^2_{H^1(\R^3)}}{L}\leq M
\end{align}
for some $M>0$. Then there are uniform constants $T_M,R_M$ and $L_M$ depending on $M$ such that the system \eqref{RBE1}-\eqref{RBE3} with initial data
$(Q_{L,T_0},v_{L,T_0})$ has
a unique strong solution $(Q_L,v_L)$ in $\R^3\times[T_0,T_M]$ satisfying
\begin{align}\alabel{prop L3 small}
	\sup_{T_0\leq t\leq T_M,x_0\in\R^3}\int_{B_{R_M}(x_0)}\(|\D
Q_L|^3+|v_L|^3+\frac{|Q_L-\pi(Q_L)|^3}{L^{\frac32}}\)(\cdot,t)\,dx\leq\frac{\varepsilon_0^3}{2}
\end{align}
and
\begin{align*}
	&\sup_{T_0\leq s\leq T_M}\left(\|\D Q_L(s)\|_{H^1(\R^3)}^2+\|v_L(s)\|_{H^1(\R^3)}^2+\frac 1 L\| Q_L(s)-\pi(Q_L(s))\|_{H^1(\R^3)}^2\right)
	\\
	&+\|\p_t Q_L\|_{L^2(T_0,T_M;H^1(\R^3))}^2+\|\D^2 Q_L\|_{L^2(T_0,T_M;H^1(\R^3))}^2
	\\
	&+\|\D v_L\|_{L^2(T_0,T_M;H^1(\R^3))}^2 +\frac 1 L\|\D( Q_L-\pi(Q_L))\|_{H^1(\R^3)}^2\leq C\(1+\frac{\varepsilon_0^2}{R_M^2}\)M\alabel{L3 global}
\end{align*}
provided $L\leq L_M$.
\end{prop}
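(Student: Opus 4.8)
The plan is a continuation argument built on Theorem~\ref{thm loc} and Lemma~\ref{L3 small}. By Theorem~\ref{thm loc} the system \eqref{RBE1}-\eqref{RBE3} with data $(Q_{L,T_0},v_{L,T_0})$ has a unique strong solution $(Q_L,v_L)$ on a maximal interval $[T_0,T_+)$; the task is to bound $T_+-T_0$ from below uniformly in $L$, to keep $Q_L$ inside $S_\delta$, and to deduce \eqref{L3 small global}. The first step is to fix the radius $R_0$: by \eqref{prop eq}, the Sobolev embedding $H^1(\R^3)\hookrightarrow L^6(\R^3)$ and H\"older's inequality, each of $\D Q_{L,T_0}$, $v_{L,T_0}$ and $(Q_{L,T_0}-\pi(Q_{L,T_0}))/\sqrt L$ is bounded in $L^6(\R^3)$ by $C\sqrt M$ independently of $L$, so that
\[
\sup_{x_0\in\R^3}\int_{B_{R_0}(x_0)}|\D Q_{L,T_0}|^3+|v_{L,T_0}|^3+\frac{|Q_{L,T_0}-\pi(Q_{L,T_0})|^3}{L^{3/2}}\,dx\le C(M)\,R_0^{3/2};
\]
choosing $R_0$ small (depending only on $M$ and $\varepsilon_0$) makes this $\le\varepsilon_0^3/8$, so the smallness condition \eqref{L3 small  cond} holds at $t=T_0$ with room to spare.

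Next I would let $\bar T\in(T_0,T_+]$ be the largest time such that $Q_L(\cdot,t)\in S_\delta$ and the local $L^3$-quantity with radius $R_0$ stays $\le\varepsilon_0^3$ on $[T_0,\bar T)$; since $Q_L$ is continuous in $t$ with values in $H^2\hookrightarrow C^0$ and $Q_{L,T_0}$ is close to $S_*$, we have $\bar T>T_0$. On $[T_0,\bar T)$ the hypothesis of Lemma~\ref{L3 small} is met, so summing \eqref{L3 small 1st order} and \eqref{L3 small 2nd ord} over a locally finite cover $\{B_{R_0}(x_i)\}$ of $\R^3$ and bounding the initial-data terms through \eqref{prop eq} (the data being in $H^2_{Q_e}$, together with $\int_{B_{R_0}}|\D Q_{L,T_0}|^2\le C(M)R_0^2$ and its analogues) gives, for every $T<\bar T$,
\[
\sup_{T_0\le s\le T}\Big(\|\D Q_L(s)\|_{H^1}^2+\|v_L(s)\|_{H^1}^2+\tfrac1L\|Q_L(s)-\pi(Q_L(s))\|_{H^1}^2\Big)+\big(\text{the space-time norms in \eqref{L3 small global}}\big)\le C\Big(1+\tfrac{\varepsilon_0^2(T-T_0)}{R_0^2}\Big)M,
\]
which is already the asserted bound \eqref{L3 small global} provided $\bar T-T_0$ is bounded away from $0$.

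To close the continuation I would upgrade $\le\varepsilon_0^3$ to $<\varepsilon_0^3/2$ on an interval whose length depends only on $M,R_0,\varepsilon_0$. For $t\in[T_0,\bar T)$ and $g(t)\in\{\D Q_L,\,v_L,\,(Q_L-\pi(Q_L))/\sqrt L\}$, the Gagliardo--Nirenberg inequality on a ball reads $\int_{B_{R_0}}|g|^3\le C\|g\|_{L^2(B_{R_0})}^{3/2}\big(\|\D g\|_{L^2(B_{R_0})}^{3/2}+R_0^{-3/2}\|g\|_{L^2(B_{R_0})}^{3/2}\big)$; inserting the pointwise-in-$t$ consequences of Lemma~\ref{L3 small}, namely $\|g(t)\|_{L^2(B_{R_0})}^2\le C(M)R_0^2+C\varepsilon_0^2(t-T_0)/R_0$ and $\|\D g(t)\|_{L^2(B_{R_0})}^2\le C(M)+C\varepsilon_0^2(t-T_0)/R_0^3$, the right-hand side is $\le C(M)R_0^{3/2}+C(M,R_0,\varepsilon_0)(t-T_0)^{3/4}$. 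Having arranged $C(M)R_0^{3/2}\le\varepsilon_0^3/8$ when choosing $R_0$, and then picking $T^\ast=T^\ast(M,R_0,\varepsilon_0)>0$ so that the error is $\le\varepsilon_0^3/8$ for $t-T_0\le T^\ast$, we get that the $L^3$-quantity stays $<\varepsilon_0^3/2$ on $[T_0,\min(\bar T,T_0+T^\ast)]$. Hence $\bar T<\min(T_+,T_0+T^\ast)$ is impossible; moreover the bound of the previous step keeps $\|Q_L(t)-Q_e\|_{H^2}$ and $\|v_L(t)\|_{H^1}$ finite on $[T_0,\bar T)$ — so Theorem~\ref{thm loc} permits continuation past any $T<\bar T$ — and keeps $\dist(Q_L,S_*)\le\delta$. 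Therefore $\bar T\ge T_0+T^\ast$; setting $T_M=T_0+T^\ast$ yields \eqref{prop L3 small} and, via the displayed bound at $T=T_M$, the estimate \eqref{L3 small global}.

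The hard part is that every constant above must be uniform in $L$ in the presence of the $1/L$-weighted bulk terms. This uniformity is already built into Lemma~\ref{L3 small}, which uses Corollary~\ref{cor fB} and Lemma~\ref{lemgb} to trade $\frac1L g_B(Q_L)$ for the coercive Hessian term $\frac1L\,\p^2_{\tilde Q\tilde Q}f_B(\tilde Q)\,\D\tilde Q\,\D\tilde Q$ plus errors controlled by $|\D Q_L|^2\,|Q_L-\pi(Q_L)|^2/L$. In the closing step the delicate points are to run the Gagliardo--Nirenberg interpolation on the rescaled field $(Q_L-\pi(Q_L))/\sqrt L$ (so the constants do not see $L$), and to ensure the interpolation error carries a strictly positive power of $t-T_0$, which is what lets the fixed threshold $\varepsilon_0^3/2$ be beaten on a time interval that does not shrink as $L\to0$.
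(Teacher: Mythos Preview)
Your proposal follows the same continuation strategy as the paper: choose $R_0$ from the Sobolev embedding so the initial local $L^3$ quantity is small, run the strong solution from Theorem~\ref{thm loc}, let $\bar T$ be the first time the $L^3$ smallness or the condition $Q_L\in S_\delta$ fails, apply Lemma~\ref{L3 small} on $[T_0,\bar T)$, and close via Gagliardo--Nirenberg interpolation on balls to show the $L^3$ quantity actually stays below $\varepsilon_0^3/2$ on a time interval of order $R_0^2$. This is precisely the paper's route (with the same explicit choice $T_M-T_0=\sigma R_0^2$, $R_0\sim\varepsilon_0^2/M$).

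There is one genuine imprecision: you obtain the global bound \eqref{L3 small global} by ``summing \eqref{L3 small 1st order} and \eqref{L3 small 2nd ord} over a locally finite cover''. This does not work as written, because those two estimates are supremum-over-$x_0$ statements whose right-hand sides contain $\sup_{x_0}\int_{B_{R_0}(x_0)}(\cdots)$ and the constant $C\varepsilon_0^2(T-T_0)/R_0^2$; summing over an infinite cover gives infinity. The paper instead goes back to Lemmas~\ref{lem 1ord} and~\ref{lem 2ord} with $\phi\equiv 1$, so all cut-off and pressure terms disappear, and the remaining quartic right-hand sides $\int_{\R^3}|\D Q_L|^2\,e(Q_L,v_L)$ are absorbed using the $L^3$-smallness on balls and the Sobolev--covering estimate~\eqref{L3 small  L4  prop}. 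The outcome is exactly the bound you state, but the mechanism is ``global energy identity plus $L^3$-smallness to close the quartic terms'', not ``sum of local estimates''. A second small omission is the initial constraint $\dist(Q_{L,T_0};S_*)\le\delta/2$: the paper derives this from Gagliardo--Nirenberg and the hypothesis~\eqref{prop eq}, at the cost of an explicit upper bound on $L$ in terms of $M$ and $\delta$; your sentence ``$Q_{L,T_0}$ is close to $S_*$'' hides this step.
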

\begin{proof}

It follows from the Sobolev embedding theorem with the constant $C_s$ that  for any $0<\varepsilon_0<1$, there exists a positive constant
$R_M:=\frac{\varepsilon_0^2}{C_s^2N^2M}$ (cf. \cite{FHM}) such that
\begin{align*}
	&\sup_{x_0\in\R^3}\int_{B_{R_M}(x_0)}|\D Q_{L,T_0}|^3+|v_{L,T_0}|^3+\frac{|Q_{L,T_0}-\pi(Q_{L,T_0})|^{3}}{L^\frac32} \,dx\leq
\frac{\varepsilon_0^3}{N^3},\alabel{prop p1}
\end{align*}
where $N>1$ is an absolute constant independent of $L$ and $M$ to be chosen later. By using the Gagliardo–Nirenberg interpolation (cf.
\cite{FHM}) at $T_0$, we have
\begin{align*}
	&\dist(Q_L(x_0, T_0);S_*)\leq \| Q_L(T_0)-\pi(Q_L(T_0))\|_{L^\infty(\R^3)}
	\\
	\leq& C\|Q_L(T_0)-\pi(Q_L(T_0))\|_{L^2(\R^3)}^{\frac14}\|\D^2(Q_L(T_0)-\pi(Q_L(T_0)))\|_{L^2(\R^3)}^{\frac34}
	\\
	\leq& C(LM)^{\frac18}\Big(\int_{\R^3}|\D^2Q_L(T_0)|^2 +|\p_Q \pi|^2|\D^2Q_L(T_0)|^2+|\p^2_{QQ} \pi|^2|\D Q_L(T_0)|^2\,dx\Big)^{\frac38}
	\\
	\leq& C_dL^{\frac18}M^{\frac14}\leq \frac{\delta}{2},\alabel{Dis}
\end{align*}
where we have used the condition \eqref{prop eq} and  chosen $L\leq L_M:=\(\frac{\delta}{2C_dM^{\frac14}}\)^8$.

Using Theorem \ref{thm loc}, there is a  unique local strong solution $(Q_L,v_L)$ such that  $(Q_L,v_L)$ is continuous in $t$, which follows
from the Sobolev inequality (cf. \cite{HLX}).  Then there is a maximal time $T_L^*\in(T_0,T_L]$ such that
\begin{align}\label{prop dist 2}
	\dist(Q_L;S_*)\leq \delta \text{  on }\R^3\times (T_0,T_L^*)
\end{align} and
\begin{align}\label{prop L3 2}
	\sup_{T_0\leq t\leq T_L^*,x_0\in\R^3}\int_{B_{R_M}(x_0)}\(|\D Q_L|^3+|v_L|^3+\frac{|Q_L-\pi(Q_L)|^3}{L^{\frac32}}\)(\cdot ,t)\,dx\leq{\varepsilon_0^3}.
\end{align}
Next, we claim that $T_L^*\geq T_0+\sigma R_M^2$ for a small constant $\sigma$ to be chosen later.

Otherwise, we assume $T_L^*\leq T_0+\sigma R_M^2$.
For \eqref{L3 global},  using a standard open cover $\{B_{R_M}(x_i)\}_{i=1}^\infty$ of $\R^3$ with finite intersections at each $x\in \R^3$,
the H\"older inequality and the Sobolev inequality, we find
\begin{align*}
	&\int_{T_0}^{T_L^*}\int_{\R^3}|\D Q_L|^4\,dxdt \leq\int_{T_0}^{T_L^*}\sum_{i=1}^{\infty}\(\int_{B_{R_M}(x_i)}|\D
Q_L|^3\,dx\)^{\frac23}\(\int_{B_{R_M}(x_i)}|\D Q_L|^6\,dx\)^{\frac13}dt
	\\
	\leq& C\varepsilon_0^2\int_{T_0}^{T_L^*}\int_{\R^3}|\D^2 Q_L|^2\,dxdt+C\varepsilon_0^2\frac{T_L^*-T_0}{R^2}\sup_{T_0\leq s\leq
T^*}\int_{\R^3}|\D Q_L(\cdot,s)|^2\,dx
	\\
	\leq& \frac12\int_{T_0}^{T_L^*}\int_{\R^3}|\D^2 Q_L|^2\,dxdt+\frac12\sup_{T_0\leq s\leq T_L^*}\int_{\R^3}|\D Q_L(\cdot,s)|^2\,dx\alabel{L3 small
L4  prop}
\end{align*}
for some small $\sigma$. Similarly, we obtain
\begin{align*}
	&\int_{T_0}^{T_L^*}\int_{\R^3}|\D Q_L|^2\(|\D Q_L|^2+|v_L|^2+\frac{|Q_L-\pi(Q_L)|^2}L\)\,dxdt
	\\
	\leq& \frac12\int_{T_0}^{T_L^*}\int_{\R^3}|\D^2 Q_L|^2+|\D v_L|^2 +\frac{|\D(Q_L-\pi(Q_L))|^2}L\,dxdt
	\\
	&+\frac12\sup_{T_0\leq s\leq  T_L^*}\int_{\R^3}\(|\D Q_L|^2+|v_L|^2+\frac{|Q_L-\pi(Q_L)|^2}L\)(\cdot,s)\,dx
	.\alabel{L3  L4  prop 2}
\end{align*}
  Choosing $\phi\equiv 1$ in Lemma \ref{lem 1ord}, using  \eqref{prop eq} and \eqref{L3  L4  prop 2}, we have
\begin{align*}
	&\sup_{T_0\leq s\leq   T_L^*}\int_{\R^3}\(|\D Q_L|^2+ |v_L|^2+\frac{|Q_L-\pi(Q_L)|^2}{L}\)(\cdot,s)\,dx
	\\
	&+\int_{T_0}^{T_L^* }\int_{\R^3}|\D^2 Q_L|^2+|\D v_L|^2+|\p_t Q_L|^2+\frac{|\D  (Q_L-\pi(Q_L))|^2}{L}\,dxdt
	\leq CM.\alabel{prop R3 1}
\end{align*}
Applying \eqref{prop R3 1} and the method in \eqref{L3  L4  prop 2}   to Lemma \ref{lem 2ord} with $\phi\equiv 1$, we find
\begin{align*}
	&\sup_{T_0\leq t\leq T_L^*,x_0\in\R^3}\int_{\R^3}\(|\D^2 Q_L|^2+ |\D v_L|^2+\frac{|\D (Q_L-\pi(Q_L))|^2}{L}\)(\cdot,s)\,dx
	\\
	&+\frac 12\int_{T_0}^{T_L^*}\int_{\R^3}|\D^3 Q_L|^2+|\D^2 v_L|^2+|\D\p_t Q_L|^2+\frac{|\D^2 (Q_L-\pi(Q_L))|^2}{L}\,dxdt
	\\
	\leq&CM+C\varepsilon_0^2\int_{\R^3} |\D^2 Q_L(x,T_L^*)|^2\,dx+\frac{C\varepsilon_0^2}{R_M^2}\int_{\R^3} |\D Q_L(x,T_M)|^2+|\D
Q_{L,0}|^2\,dx
	\\
	&+\frac{C\varepsilon_0^2}{R_M^2}\int_{T_0}^{T_L^*}\int_{\R^3}|\D^2 Q_L|^2+|\D v_L|^2+|\p_t Q_L|^2+\frac{|\D  (Q_L-\pi(Q_L))|^2}{L}\,dxdt
	\\
	&+\frac{C\varepsilon_0^2}{R_M^2}\int_{T_0}^{T_L^*}\int_{\R^3}|\D Q_L|^4+|v_L|^4+\frac{|Q_L-\pi(Q_L)|^4}{L^2}\,dxdt
	\leq C\(1+\frac{\varepsilon_0^2}{R_M^2}\)M
	.\alabel{prop R3 2}
\end{align*}
Combining \eqref{prop R3 1} with \eqref{prop R3 2}, we prove \eqref{L3 global}.

Using \eqref{prop eq}, \eqref{prop p1}, $R_M=\frac{\varepsilon_0^2}{C_s^2N^2M}$ and choosing $T_L^*\leq T_0 + \sigma R_M^2$ for some small
$\sigma$, we have
\begin{align*}
	&CR_M\sup_{x_0\in\R^3}\int_{B_{R_M}(x_0)} |\D^2 Q_{L,T_0}|^2+ |\D v_{L,T_0}|^2+\frac{|\D  (Q_{L,T_0}-\pi(Q_{L,T_0}))|^2}{L}  \,dx
	\\
	&+\frac{C}{R_M}\sup_{x_0\in\R^3}\int_{B_{R_M}(x_0)}|\D Q_{L,T_0}|^2+| v_{L,T_0}|^2+\frac{|Q_{L,T_0}-\pi(Q_{L,T_0})|^2}{L}
\,dx+\varepsilon_0^2\frac{C(T_L-T_0)}{R_M^2}
	\\
	\leq& CMR_M+\frac{C|B_{R_M}|^{\frac 13}}{R_M}\frac{\varepsilon_0^2}{N^2}+C\varepsilon_0^2\sigma\leq
\frac{C\varepsilon_0^2}{N^2}+C\varepsilon_0^2\sigma
	.\alabel{GN}
\end{align*}
By using the Gagliardo-Nirenberg interpolation and applying \eqref{GN} to \eqref{L3 1st order}-\eqref{L3 2nd ord}, we obtain for
$T_L^*\leq T_0+\sigma R_M^2$
\begin{align*}
	&\sup_{T_0\leq t\leq T^*_L,x_0\in\R^3}\int_{B_{R_M}(x_0)} |\D Q_L|^3+|v_L|^3+\frac{|Q_L-\pi(Q_L)|^3}{L^{\frac32}}\,dx
	\\
	\leq&  C\sup_{T_0\leq t\leq T^*_L,x_0\in\R^3}\left(\frac{1}{R_M}\int_{B_{R_M}(x_0)}|\D
Q_L|^2+|v_L|^2+\frac{|Q_L-\pi(Q_L)|^2}{L}\,dx\right)^{3/2}
	\\
	&+C\sup_{T_0\leq t\leq T^*_L,x_0\in\R^3}\left(R_M\int_{B_{R_M}(x_0)}|\D^2 Q_L|^2+|\D
v_L|^2+\frac{|\D(Q_L-\pi(Q_L))|^2}{L}\,dx\right)^{3/2}
	\\
	\leq& \left(\frac{C_1\varepsilon_0^2}{N^2}+C_2\sigma\varepsilon_0^2 \right)^{3/2}\leq\frac{\varepsilon_0^3}{2},
\end{align*}
where we choose $N\geq (8C_1+1)^{\frac12}$ and $\sigma \leq \min\{(8C_2)^{-1},1\}$.

Using a similar argument to the one in \eqref{Dis}, we can prove
 that $\dist(Q_L(t);S_*)\leq \delta/2$  for any $t\in (T_0,T_L^*)$ with $T_L^*\leq T_0+\sigma R_M^2$.
This proves that if $T_L^*\leq T_0+\sigma R_M^2$, then $T_L^*$ is not the maximal time satisfying \eqref{prop dist 2}-\eqref{prop L3 2}.
Therefore   $T_L^*\geq T_M =T_0+\sigma R_M^2$.
\end{proof}

\begin{proof}[\bf Proof of Theorem \ref{thm 2}]
By using Proposition \ref{prop Extension} and Lemma \ref{L3 small},  there exist two uniform positive constants $T_1$
and  $L_\ast$ such that for   any $L\leq L_\ast$, the strong solution  $(Q_L,v_L)$ to  \eqref{RBE1}-\eqref{RBE3} satisfies
\begin{align*}
	&\sup_{0\leq t\leq T_1}\left(\|\D Q_L(t)\|_{H^1(\R^3)}^2+\|v_L(t)\|_{H^1(\R^3)}^2+\frac 1L\| Q_L(t)-\pi(Q_L(t))\|_{H^1(\R^3)}^2 \right)
	\\
	&+\|\p_t Q_L\|_{L^2(0,T_1;H^1(\R^3))}^2+\|\D^2 Q_L\|_{L^2(0,T_1;H^1(\R^3))}^2+\|\D v_L\|_{L^2(0,T_1;H^1(\R^3))}^2
	\\
&+\frac1L \| Q_L-\pi(Q_L)\|_{L^2(0,T_1;H^2(\R^3))}^2  \leq C\(1+\frac{\varepsilon_0^2}{R^2_M}\)M
	.\alabel{L3 global p2}
\end{align*}
Note the pressure $P_L$ satisfies
\eqref{pressure estimateP1.1}. Then  using \eqref{L3 global p2}, we find
\begin{align*}
	\int_{0}^{T_1}\int_{\R^3}|P_L|^2\,dxdt\leq \int_{0}^{T_1}\int_{\R^3}|\D Q_L|^4+|v_L|^4+|\D^2 Q_L|^2\,dxdt\leq C
	\alabel{eq P}
\end{align*}
and
\begin{align*}
	&\int_{0}^{T_1}\int_{\R^3}|\D P_L|^2\,dxdt
	\\
	\leq& C\int_{0}^{T_1}\int_{\R^3}\left(|\D[Q_L,\MH(Q_L,\D Q_L)]|^2+|\D\sigma(Q_L,\D Q_L) |^2+|v_L|^2|\D v_L|^2\right)\,dxdt
	\\
	\leq&  C\int_{0}^{T_1}\int_{\R^3}|\D^3 Q_L|^2+|\D^2 Q_L|^2|\D Q_L|^2+|\D^2 Q_L|^2+|\D v_L|^2|v_L|^2\,dxdt\leq C
	.\alabel{eq P2}
\end{align*}
Multiplying \eqref{RBE1} with $(Q_L-Q_e)$, one can show that $(Q_L-Q_e)\in L^\infty(0,T_1; L^2(\R^3))$.
It follows from \eqref{RBE3}, \eqref{L3 global p2} and \eqref{eq P2} that
\begin{align*}
&\|\p_t v_L\|_{L^2(0,T_1;L^2(\R^3))}^2\leq C \int_0^{T_1}\int_{\R^3}|\D^3 Q_L|^2+|\D^2 v_L|^2+|v|^2|\D v_L|^2\,dxdt
\\
&+C \int_0^{T_1}\int_{\R^3}|\D P_L|^2+|\D v_L|^2+|\D Q_L|^2|\D^2 Q_L|^2+|\D^2 Q_L|^2\,dxdt\leq C.\alabel{eq v_t}
\end{align*}
Then,  letting $L\to0$ (up to a subsequence), we have $Q\in S_*$ and
\begin{align*}
	Q_L\rightharpoonup&\; Q\text{  in  } L^2(0,T_1; H^3_{Q_e}(\R^3))\cap H^1(0,T_1; H^2_{Q_e}(\R^3)),
	\\
	\p_t Q_L\rightharpoonup&\;  \p_t Q\text{  in  }L^2(0,T_1;H^1(\R^3)),
	\\
	v_L\rightharpoonup&\; v\text{  in  } L^2(0,T_1; H^2(\R^3))\cap H^1(\R^3\times(0,T_1)),
	\\
	\p_tv_L\rightharpoonup&\; \p_tv\text{  in  }L^2(0,T_1;L^2(\R^3)),
	\\
	P_L\rightharpoonup&\; P \text{  in  }L^2(0,T_1;H^1(\R^3)).
\end{align*}
Utilizing the Aubin-Lions Lemma (cf. \cite{HLX}) with \eqref{L3 global p2} and \eqref{eq v_t}, we also have
\[(\D Q_L,v_L)\rightarrow\; (Q,v)\text{  in   }L^2(0,T_1;H^1(B_R(0)))\cap C([0,T_1];L^2(B_R(0)))\]
for any $R\in(0,\infty)$.

Since $Q_L$ commutes with itself, we obtain
\begin{align*}
[g_B(Q_L),Q_L]
=&[aQ_L+b(Q_LQ_L-\frac13 \tr(Q_L^2)I)-cQ_L\tr(Q_L^2),Q_L]
\\
=&[bQ_LQ_L,Q_L]=0.
\end{align*}
Taking the Lie bracket of \eqref{RBE3} with $Q_L$ twice, we find
\begin{align*}
	[[\p_t Q_L+v_L\cdot \D  Q _L+[Q_L, \Omega_L],Q_L],Q_L]=& [[\MH(Q_L,\D Q_L),Q_L],Q_L]
	.\alabel{Lie PDE}
\end{align*}
Letting $L \to 0$ in \eqref{Lie PDE}, we have
\begin{align*}
	[[(\p_t +v\cdot \D )Q+[Q, \Omega],Q],Q]= [[\MH(Q,\D Q),Q],Q],\alabel{lie 2}
\end{align*}
where
\begin{align*}
\MH_{ij}=&\frac12\(\D_k[\p_{p_{ij}^k} f_E]+\D_k[\p_{p_{ji}^k} f_E]\)
	- \frac12\(\p_{Q_{ij}} f_E+\p_{Q_{ji}} f_E\)
	\\
	&-\frac{\delta_{ij}}3\sum_{l=1}^3\(\D_k[\p_{p_{ll}^k} f_E]-\p_{Q_{ll}} f_E\).
\end{align*}
Note that  $[I,A]=0$, $\forall A\in \mathbb M^{3\times 3}$ and $Q =s_+ (u\otimes u-\frac 13 I)$ for $|u|=1$. Then
\begin{align*}
[[\p_t Q,Q],Q]_{ij}=& s_+^2(\p_t Q_{ik}u_ku_j +u_iu_k\p_t Q_{kj}-2u_iu_k\p_t Q_{kl} u_lu_j) =s_+^2\p_t Q_{ij},
\\
[[[Q,\Omega],Q],Q]=&s_+^2[\((u\otimes u)\Omega+\Omega(u\otimes u)-2(u\otimes u)\Omega(u\otimes u)\),Q]=[Q,\Omega],
\\
[[\D\p_{p} f,Q],Q]
=&s_+^2\([\nabla_k \p_{p^k} f_E(s_+^{-1}Q+\frac13 I)+(s_+^{-1}Q+\frac13 I)\nabla_k \p_{p^k} f_E\)
\\
&-2s_+^2(s_+^{-1}Q+\frac13 I)\nabla_k \p_{p^k} f_E(s_+^{-1}Q+\frac13 I).
\end{align*}
Recall from \eqref{EL} that
\begin{align*}
	H(Q, \D Q)=&\frac{1}2[\nabla_k \p_{p^k} f_E-\p_{Q} f_E +(\nabla_k \p_{p^k} f_E-\p_{Q} f_E) ^T](s_+^{-1}Q+\frac13 I)
	\\
	&+\frac{1}2(s_+^{-1}Q+\frac13 I)[\nabla_k \p_{p^k} f_E-\p_{Q} f_E +(\nabla_k \p_{p^k} f_E-\p_{Q} f_E) ^T]
	\\
	&-(s_+^{-1}Q+\frac13 I)[\nabla_k \p_{p^k} f_E-\p_{Q} f_E +(\nabla_k \p_{p^k} f_E-\p_{Q} f_E) ^T](s_+^{-1}Q+\frac13 I).
\end{align*}
Then we deduce from the above and \eqref{lie 2} that
\[(\p_t +v\cdot \D )Q+[Q, \Omega]= H(Q,\D Q).\]
Thus,  as $L\to0$, the solution $(Q_L,v_L)$ of \eqref{RBE1}-\eqref{RBE3} converges
 to a solution $(Q, v)$ of \eqref{BE1}-\eqref{BE3}. Taking the difference between two solutions under $L^2$ estimates, it can be
 shown (cf. \cite{HLX} or \cite{FHM}) that  the strong solution  $(Q,v)$ is unique. The proof of uniqueness is similar to the claim 2 in the
 appendix, so we omit the details here.

 Next we verify the criteria of the maximal solution in Theorem \ref{thm 2}. Let $(Q,v)$ be a solution of \eqref{BE1}-\eqref{BE3} in $\R^3\times [0,T^1)$ for any $T^1< T^*$. Assume that
 \begin{align*}\alabel{L3 Q}
 \sup_{0\leq t\leq {T^1},x_0\in\R^3}\int_{B_{R}(x_0)} |\D Q|^3+|v|^3\,dx<\varepsilon_0
 \end{align*}
 for some $\varepsilon_0>0$ and $R>0$.

 Similarly to the proof of Lemma \ref{lem 2ord}, we multiply \eqref{BE1} by $\Delta v$,  and    \eqref{BE3}  by $\Delta H(Q,\D Q)$ and  $\Delta^2 Q$.  Then we apply Sobolev's inequality with \eqref{L3 Q} and use a cover argument to obtain
 \begin{align*}
 	&\int_{\R^3}\(|\D^2 Q |^2+ |\D v  |^2\)(\cdot,T^1)\,dx
 	+\int_0^{T^1}\int_{\R^3}|\D^3 Q |^2+|\D^2 v|^2+|\D\p_t Q |^2\,dxdt
 	\\
 	\leq& C\int_{\R^3}|\D^2 Q_{0}|^2+ |\D v_{0}|^2\,dx
 	\\
 	&+C\int_0^{T^1}\int_{\R^3}(|\D Q |^2+|v |^2)\(|\D^2 Q |^2+|\D Q|^4+|\p_t Q|^2+|\D v |^2\)\,dxdt
 	\\
 	\leq&C+C\varepsilon_0^2\int_0^{T^1}\sum_i\left[\int_{B_R(x_i)}|\D^2 Q |^6+(|\D Q|^2)^{6}+|\p_t Q|^6+|\D v |^6\,dx\right]^{\frac13}dt
 	\\
 	\leq&C+C\varepsilon_0^2\int_0^{T^1}\int_{\R^3}|\D^3 Q |^2+|\D \p_t Q|^2+|\D^2 v |^2\,dxdt
 	\\
 	&+\frac{C}{R^2}\int_0^{T^1}\int_{\R^3}|\D^2 Q |^2+|\D Q|^{2}+|\p_t Q|^2+|\D v |^2\,dxdt.\alabel{3.36}
 \end{align*}
 Using \eqref{3.36} for a sufficiently small $\varepsilon_0$, we know
$(Q(T^1),v(T^1)) \in H^2_{Q_e}(\mathbb{R}^3)\times H^1(\mathbb{R}^3)$. Letting $(Q(T^1),v(T^1))$   be a new initial value at $T^1$,  the local existence  guarantees that
 the solution can be extended passing $T^1$. Therefore, we can extend the solution up to $T^*$.
\end{proof}

\section{Smooth convergence}
In this section, we prove Theorem  \ref{thm 3}. At first,
we obtain the following higher order estimate:
\begin{lemma}\label{kth} 	Let $(Q_L,v_L)$ be a strong solution of $\eqref{RBE1}-\eqref{RBE3}$ in $\R^3 \times [T_0,T_M)$ with initial value
$(Q_{L,T_0},v_{L,T_0})\in H^2_{Q_e} (\R^3)\times H^1(\R^3)$ and $\div v=0$. For any $\tau>T_0$,  $s\in(\tau,T_M]$ and any integer $m\geq 0$,
there exists  a positive constant $C_m$  independently of $Q_L$ and $L$  (but depending on $m$) such  that
	\begin{align*}
		&\sup_{\tau\leq s\leq T_M}\int_{\R^3}\left(|\D^{m+1}Q_L|^2+|\D^m v_L|^2 +\frac{1}{L}|\D^m(Q_L-\pi(Q_L))|^2\right)(\cdot,t)\,dx
		\\
		&+\int_{\tau}^{T_M}\int_{\R^3} |\D^{m+2}Q_L|^2+|\D^{m+1}v_L|^2+|\D^m\p_t Q_L|^2\,dxdt
		\\
		&+\int_{\tau}^{T_M}\int_{\R^3} \frac{1}{L}|\D^{m+1}(Q_L-\pi(Q_L))|^2\,dxdt \leq C_m.
		\alabel{kth estimates}
	\end{align*}
\end{lemma}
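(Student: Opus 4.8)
We argue by induction on $m$, carrying out the scheme of Lemmas \ref{lem 1ord}--\ref{lem 2ord} globally on $\R^3$ (i.e.\ with $\phi\equiv1$) and at order $m$. The cases $m=0$ and $m=1$ are already contained in Proposition \ref{prop Extension}: estimate \eqref{L3 small global} gives at once $\sup_t\big(\|\D^{j+1}Q_L\|^2+\|\D^jv_L\|^2+\tfrac1L\|\D^j(Q_L-\pi(Q_L))\|^2\big)$ for $j=0,1$ together with the corresponding $L^2_t$-bounds on $\D^{j+2}Q_L$, $\D^{j+1}v_L$, $\D^j\p_tQ_L$ and $\tfrac{1}{\sqrt L}\D^{j+1}(Q_L-\pi(Q_L))$. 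Assume now, for some $m\ge2$, that \eqref{kth estimates} holds at every order $\le m-1$ and for every $\tau>T_0$, with a constant depending only on the order and on $\tau-T_0$. Applying the order-$(m-1)$ estimate on the interval $[\tfrac{T_0+\tau}{2},T_M]$ shows that $\D^{m+1}Q_L$, $\D^mv_L$ and $\tfrac{1}{\sqrt L}\D^m(Q_L-\pi(Q_L))$ lie in $L^2(\tfrac{T_0+\tau}{2},T_M;L^2)$ with a bound uniform in $L$; hence there is a time $t_\ast\in(\tfrac{T_0+\tau}{2},\tau)$ at which
\begin{equation*}
\|\D^{m+1}Q_L(t_\ast)\|^2+\|\D^mv_L(t_\ast)\|^2+\tfrac1L\|\D^m(Q_L(t_\ast)-\pi(Q_L(t_\ast)))\|^2\le \frac{C_{m-1}}{\tau-T_0}
\end{equation*}
uniformly in $L$. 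We take $t_\ast$ as a new initial time and derive the order-$m$ estimate on $[t_\ast,T_M]\supset[\tau,T_M]$; a standard approximation argument justifies the formal integrations by parts below. Note that the order-$(\le1)$ bounds and the Sobolev embedding $H^2(\R^3)\hookrightarrow L^\infty(\R^3)$ already give, uniformly in $L$,
\begin{equation*}
e(Q_L,v_L):=|\D Q_L|^2+|v_L|^2+\tfrac1L|Q_L-\pi(Q_L)|^2\in L^1\big(T_0,T_M;L^\infty(\R^3)\big),
\end{equation*}
which is precisely the integrability needed to close a Gronwall argument at order $m$.

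\textbf{The $Q$-estimates.} Apply $\D^{m+1}$ to the rotated equation \eqref{ROT RBE} and test against $R_{Q_L}^T\D^{m+1}Q_LR_{Q_L}$. Integrating by parts twice and using the ellipticity \eqref{sec2 f_E} of $f_E$ exactly as in \eqref{fE third} produces the dissipative term $-\tfrac\alpha2\|\D^{m+2}Q_L\|^2$ (modulo lower-order terms); for the singular term $\tfrac1L\D^{m+1}g_B(\tilde Q_L)$ we write $g_B(\tilde Q_L)=-\D_{\tilde Q}f_B(\tilde Q_L)-\tfrac b3\tr(\tilde Q_L^2)I$, use Lemma \ref{lemgb} with $k=m+1$ to discard the contributions of the derivatives of $R_{Q_L}$, and then Corollary \ref{cor fB} (the bound \eqref{f_B kth rule} with $k=m+1$) to produce $-\tfrac{\lambda}{4L}\|\D^{m+1}(Q_L-\pi(Q_L))\|^2$ up to errors of the form $\sum_{\mu_1\le m}|\D^{\mu_1}(Q_L-\pi(Q_L))|^2|\D^{\mu_2}Q_L|^2\cdots|\D^{\mu_{m+2}}Q_L|^2$. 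Every remaining term — the transport $v_L\cdot\D Q_L$, the commutator $[Q_L,\Omega_L]$, the lower-order pieces of $g_B$, and all factors $\D^jR_{Q_L}$ with $j\ge1$ — carries at most $m+2$ spatial derivatives of $Q_L$, at most $m+1$ of $v_L$, and at most $m$ time derivatives of $Q_L$, so each is controlled by the Gagliardo--Nirenberg interpolation inequality against the induction hypothesis, absorbing $\|\D^{m+2}Q_L\|^2$, $\|\D^{m+1}v_L\|^2$ and $\|\D^m\p_tQ_L\|^2$ with small constants. Next, applying $\D^m$ to \eqref{ROT RBE} and testing against $\D^m(R_{Q_L}^T\p_tQ_LR_{Q_L})$ — following \eqref{2nd pt Q}--\eqref{2nd Qt} — yields the time derivative of a weighted $\|\D^{m+1}Q_L\|^2+\tfrac1L\|\D^m(Q_L-\pi(Q_L))\|^2$ plus the good term $\|\D^m\p_tQ_L\|^2$, again up to interpolation-controlled remainders.

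\textbf{The velocity estimate and the coupling.} Taking the $L^2$-inner product of \eqref{RBE1} with $(-\Delta)^mv_L$ gives $\tfrac12\tfrac{d}{dt}\|\D^mv_L\|^2+\|\D^{m+1}v_L\|^2$ plus interpolation-controlled terms and one genuinely borderline term, $\int(-\Delta)^mv_{L,i}\,\D_j[Q_L,\MH(Q_L,\D Q_L)]_{ij}$, which involves $2m+3$ derivatives (the $g_B$-contribution to the stress vanishes since $[Q_L,g_B(Q_L)]=0$). As in \eqref{2ord Lie}--\eqref{2ord can}, we remove it by differentiating \eqref{ROT RBE} with $\D^m$ and testing against $\D^m\big(R_{Q_L}^T(\MH(Q_L,\D Q_L)+\tfrac1Lg_B(Q_L))R_{Q_L}\big)$: this produces $\|\D^m(R_{Q_L}^T(\MH+\tfrac1Lg_B)R_{Q_L})\|^2$ on the left (hence control of $\|\D^m\p_tQ_L\|^2$ via \eqref{ROT RBE}), and, through the Lie-bracket identity of Lemma \ref{Lie} applied to the term $[Q_L,\Omega_L]$ tested against $\MH+\tfrac1Lg_B$, it also reproduces the borderline term with the opposite sign, so the two cancel up to harmless remainders. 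Assembling the four differential inequalities, choosing all the small parameters $\eta$ suitably, integrating from $t_\ast$ to $s$ and applying Gronwall's inequality with coefficient $e(Q_L,v_L)\in L^1_tL^\infty_x$, we obtain \eqref{kth estimates} on $[\tau,T_M]$ with a constant depending only on $m$ and $\tau-T_0$. This closes the induction.

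\textbf{Main obstacle.} The delicate point is twofold: (i) the borderline coupling term above must be cancelled exactly, not estimated, which forces one to run the velocity estimate and the $H$-test of the $Q$-equation simultaneously, and this coupled structure is what distinguishes the high-order argument from the Ginzburg--Landau case; and (ii) at arbitrary order one must keep under control all the non-smooth corrections $\D^jR_{Q_L}$ and all derivatives of $g_B(\tilde Q_L)$, which is exactly what Corollary \ref{cor fB} and Lemma \ref{lemgb} are designed for. The rest is a long but routine interpolation bookkeeping against the induction hypothesis.
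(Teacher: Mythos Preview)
Your proposal is correct and follows essentially the same route as the paper: induction on $m$ with base cases $m=0,1$ taken from Proposition~\ref{prop Extension}, the mean-value-theorem choice of a good initial time $t_\ast$, the four tests (spatial-derivative test on $Q$, time-derivative test on $Q$, the velocity estimate, and the $\MH$-test of the rotated equation to cancel the borderline Lie-bracket term), with Corollary~\ref{cor fB} and Lemma~\ref{lemgb} handling the singular $g_B$-contributions exactly as you describe. One minor imprecision: the Gronwall coefficient the paper actually uses in \eqref{kth final} is not $\|e(Q_L,v_L)\|_{L^\infty_x}$ but the order-$k$ dissipation $\|\D^{k+2}Q_L\|_{L^2}^2+\|\D^{k+1}v_L\|_{L^2}^2+\|\p_tQ_L\|_{H^k}^2+\tfrac1L\|\D^{k+1}(Q_L-\pi(Q_L))\|_{L^2}^2$, which is $L^1_t$ by the induction hypothesis~\eqref{kth aspt}; some of the quadratic remainders (e.g.\ those arising in \eqref{kth lot}) genuinely produce $y^2$-type terms that cannot be written as $e\cdot y$, and it is this $L^1_t$-integrability of $y$ itself, already furnished by the previous induction step, that closes the loop.
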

\begin{proof}
	We prove this lemma by induction. In view of \eqref{prop R3 1} and \eqref{prop R3 2}, one has shown  \eqref{kth estimates} holds for
$m=0,1$. Assume that \eqref{kth estimates} holds for  $m=1,\cdots, k$ with $k\geq 1$. Then we have
	\begin{align*}
		&\sup_{\frac{\tau}2\leq s\leq T_M}\sum_{i=0}^{k}\int_{\R^3}\left(|\D^{i+1}Q_L|^2+|\D^i v_L|^2
+\frac{1}{L}|\D^i(Q_L-\pi(Q_L))|^2\right)(\cdot,s)\,dx
		\\
		&+\sum_{i=0}^{k}\int_{\frac{\tau}2}^{T_M}\int_{\R^3}|\D^{i+2}Q_L|^2+|\D^{i+1}v_L|^2+|\D^i\p_t Q_L|^2\,dxdt
		\\
		&+\sum_{i=0}^{k}\int_{\frac{\tau}2}^{T_M}\int_{\R^3} \frac{1}{L}|\D^{i+1}(Q_L-\pi(Q_L))|^2  \,dxdt \leq C_k(\tau).
		\alabel{kth aspt}
	\end{align*}
	For $m=k$, it follows from using \eqref{kth aspt}   and the mean value theorem that there exists a   $\tau_{L}\in(\tau/2, \tau)$ such that
	\begin{align*}
		&\int_{\R^3}\left(|\D^{k+2}Q_L|^2+|\D^{k+1} v_L|^2 +\frac{1}{L}|\D^{k+1}(Q_L-\pi(Q_L))|^2  \right)(\cdot,\tau _L)\,dx\leq C_k(\tau).
		\alabel{kth tau}
	\end{align*}
Applying the Sobolev inequality to \eqref{kth aspt}, we obtain
\begin{align*}
	&\sup_{\tau_L\leq s\leq T_M}\sum_{i=0}^{k-1}\|\D^{i}(Q_L-Q_e)(s)\|_{L^\infty(\R^3)}\leq C_k(\tau).
	\alabel{kth Sob1}
\end{align*}
For functions $f_1,f_2\in H^1(\R^3)$, it follows from  H\"older's inequality and   Sobolev's inequality  that
 \begin{align*}
 	&\int_{\R^3}|f_1|^2|f_2|^2\,dx\leq \|f_1\|_{L^3(\R^3)}^{2}\|f_2\|_{L^6(\R^3)}^2
 	\\
 	\leq&C\(\|f_1\|_{L^6(\R^3)}^{\frac12}\|f_1\|_{L^2(\R^3)}^{\frac12}\)^2\|\D f_2\|^2_{L^2(\R^3)}\leq C\|f_1\|^2_{H^1(\R^3)}\|\D f_2\|^2_{L^2(\R^3)},\alabel{kth f1f2}
 	\\
 	\mbox{and}
 	\\
 	&\int_{\R^3}|f_1|^2|f_2|^4\,dx\leq\|f_1\|_{L^6(\R^3)}^2\|f_2\|_{L^6(\R^3)}^4\leq C\|\na f_1\|^2_{L^2(\R^3)}\| \na f_2\|^4_{L^2(\R^3)}.\alabel{kth f1f2^2}
 \end{align*}
	Next, we show that \eqref{kth estimates} holds for  $m=k+1$.
	
	In order to derive the $L^2$-norm of $\D^{k+3} Q_L$, we apply $\D^{k}\D_\beta$ to \eqref{ROT RBE} and multiply by $\D^{k+2}
(R^T(Q_L)\D_\beta  Q_LR(Q_L))$ to obtain
	\begin{align*}
		&\int_{\R^3}\<\D^{k}\D_{\beta} \Big(R^T(Q_L)\big(\p_t Q_L+v_L\cdot \D Q_L\big)R(Q_L)\Big), \D^{k+2} (R^T(Q_L)\D_\beta  Q_LR(Q_L))
\> \,dx
		\\
		&+\int_{\R^3}\<\D^{k}\D_{\beta} \Big(R^T(Q_L)\big([Q_L, \Omega_L]\big)R(Q_L)\Big), \D^{k+2} (R^T(Q_L)\D_\beta  Q_LR(Q_L)) \> \,dx
		\\
		=&\int_{\R^3}\<\D^{k}\D_{\beta}\(R^T(Q_L)\MH(Q_L,\D Q_L) R(Q_L) \),\D^{k+2} (R^T(Q_L)\D_\beta  Q_LR(Q_L))\> \,dx
		\\
		&+\frac1L\int_{\R^3}\<\D^{k}\D_{\beta} g_B(\tilde Q_L),\D^{k+2} (R^T(Q_L)\D_\beta  Q_LR(Q_L))\> \,dx
		.\alabel{kth Delta Q eq}
	\end{align*}
	For the first term on the right-hand side of \eqref{kth Delta Q eq}, we have
	\begin{align*}
	&-\int_{\R^3}\<\D^{k}\D_{\beta}\(R^T(Q_L)\MH(Q_L,\D Q_L) R(Q_L) \),\D^{k+2} (R^T(Q_L)\D_\beta  Q_LR(Q_L))\> \,dx
	\\
	\leq&-\int_{\R^3}\<\D^{k}\D_{\beta}\D_\nu\( \p_{p^\nu} f_E\), \D^{k+2}  \D_\beta  Q_L \>
	\,dx+\eta\int_{\R^3}|\D^{k+2}  \p_p f_E |^2\,dx
	\\
		&+C(\eta)\sum_{\mu_1+\mu_2+\mu_3=k+1}\int_{\R^3}|\D^{\mu_1}\D Q_L|^2|\D^{\mu_2}\D R(Q_L)|^2|\D^{\mu_3}R(Q_L)|^2 \,dx
	\\
	&+C(\eta)\sum_{\mu_1+\mu_2+\mu_3=k}\int_{\R^3}|\D^{\mu_1}\MH(Q_L,\D Q_L)|^2|\D^{\mu_2}\D R(Q_L)|^2|\D^{\mu_3}R(Q_L)|^2 \,dx
	\\
	&+C(\eta)\int_{\R^3}|\D^{k+1}\p_{ Q} f_E|^2\,dx+\eta\int_{\R^3}|\D^{k+2}(R^T(Q_L) \D Q_LR(Q_L))|^2\,dx
	\alabel{kth H 1}
\end{align*}
	for some small $\eta$ to be chosen later. We deduce the first term on the right-hand side in \eqref{kth H 1} from \eqref{sec2 f_E} that
	\begin{align*}
		&-\int_{\R^3}\<\D^{k}\D_{\beta}\D_\nu\( \p_{p^\nu} f_E\), \D^{k+2}  \D_\beta  Q_L \>\,dx
		\\
		\leq &-\int_{\R^3} \p^2_{p^\nu_{ij}p^\gamma_{mn} }f_E\D^{k+1}\D^2_{\beta \gamma}(Q_L)_{mn}\D^{k+1}\D^2_{\beta\nu}(Q_L)_{ij} \,dx
		\\
		&+C\int_{\R^3}|\D^{k+3} Q_L|\sum_{\mu_1+\mu_2=k} |\D^{\mu_1}\D\p^2_{pp}f_E||\D^{\mu_2}\D^2 Q_L| \,dx
		\\
		&+C\int_{\R^3}|\D^{k+3} Q_L|\sum_{\mu_1+\mu_2=k+1}|\D^{\mu_1}\p^2_{pQ }f_E||\D^{\mu_2}\D Q_L| \,dx
		\\
		\leq&-\frac{3\alpha}{8}\int_{\R^3}|\D^{k+3} Q_L |^2\,dx+C\int_{\R^3}\sum_{\mu_1+\mu_2=k} |\D^{\mu_1}\D\p^2_{pp}f_E|^2|\D^{\mu_2}\D^2
Q_L|^2 \,dx
		\\
		&+C\int_{\R^3}\sum_{\mu_1+\mu_2=k+1}|\D^{\mu_1}\p^2_{pQ }f_E|^2|\D^{\mu_2}\D Q_L|^2 \,dx,
		\alabel{kth Delta Q}
	\end{align*}
	where $\alpha$ is a constant defined in \eqref{sec2 f_E}.
Using \eqref{kth aspt}, \eqref{kth
Sob1}-\eqref{kth f1f2^2}, the second last term in \eqref{kth H 1} becomes
	\begin{align*}
		&\int_{\R^3}|\D^{k+2}(R^T(Q_L) \D Q_LR(Q_L))|^2\,dx
		\\
		\leq& C\int_{\R^3}\sum_{\mu_1+\mu_2+\mu_3=k+2}|\D^{\mu_1}\D Q_L|^2|\D^{\mu_2}R(Q_L)|^2|\D^{\mu_3}R(Q_L)|^2\,dx
		\\
		\leq& C\int_{\R^3}|\D^{k+3} Q_L|^2\,dx
		+C\|\D Q_L(x)\|_{L^\infty(\R^3)}^2\int_{\R^3}|\D^{k+2} Q_L|^2\,dx
		\\
		&+C\(\int_{\R^3}|\D^{k+2} Q_L|^2+|\D^{k+1} Q_L|^2\,dx\)^2+C
		\\
		\leq&C\|\D^{k+3} Q_L\|_{L^2(\R^3)}^2
		+C (\|\D^{k+2} Q_L\|_{L^2(\R^3)}^2+1)^2
		.\alabel{kth lot}
	\end{align*}
	Similarly, we obtain that
	\begin{align*}
		&\int_{\R^3}|\D^{k}\D_{\beta}(\p_{ Q} f_E)|^2+\int_{\R^3} \sum_{\mu_1+\mu_2=k}|\D^{\mu_1}\D\p^2_{pp}f_E|^2|\D^{\mu_2}\D^2 Q_L|^2\,dx
		\\
		&+\sum_{\mu_1+\mu_2+\mu_3=k}\int_{\R^3}|\D^{\mu_1}\MH(Q_L,\D Q_L)|^2|\D^{\mu_2}\D R(Q_L)|^2|\D^{\mu_3}R(Q_L))|^2 \,dx
		\\
		&+\int_{\R^3}\sum_{\mu_1+\mu_2=k+1}|\D^{\mu_1}\p^2_{pQ }f_E|^2|\D^{\mu_2}\D Q_L|^2  \,dx \leq C (\|\D^{k+2} Q_L\|_{L^2(\R^3)}^2+1)^2
		.\alabel{kth lower}
	\end{align*}	
	Substituting \eqref{kth Delta Q}-\eqref{kth lower} into \eqref{kth H 1} and choosing sufficiently small $\eta$, we have
	\begin{align*}
	&-\int_{\R^3}\<\D^{k}\D_{\beta}\(R^T(Q_L)\MH(Q_L,\D Q_L) R(Q_L) \),\D^{k+2} (R^T(Q_L)\D_\beta  Q_LR(Q_L))\> \,dx
	\\
	\leq&-\frac{\alpha}{4}\|\D^{k+3} Q_L\|_{L^2(\R^3)}^2 +C(\|\D^{k+2} Q_L\|_{L^2(\R^3)}^2+1)^2
		.\alabel{kth H}
	\end{align*}
	To estimate the second term on the right-hand side of \eqref{kth Delta Q eq}, we utilize Lemma
	\ref{lem gb}, \eqref{kth aspt}, and \eqref{kth Sob1}-\eqref{kth f1f2^2}  to find
	\begin{align*}
		&-\frac1L\int_{\R^3}\<\D^{k}\D_{\beta} g_B(\tilde Q_L),\D^{k+2} (R^T(Q_L)\D_\beta  Q_LR(Q_L))\> \,dx
		\\
		\leq&-\frac{\lambda}{8L} \|\D^{k+2} (Q_L -\pi(Q_L ))\|_{L^2(\R^3)}^2
		\\
		&+C(\|\D^{k+2} Q_L\|_{L^2(\R^3)}^2+1)\(\frac{1}{L}\|\D^{k+1} (Q_L-\pi(Q_L)) \|_{L^2(\R^3)}^2+1\)
		.\alabel{kth g_B}
	\end{align*}
	Applying \eqref{kth aspt}, \eqref{kth Sob1}-\eqref{kth f1f2^2} and \eqref{kth lot} to the left-hand side of \eqref{kth Delta Q eq}, we obtain
	\begin{align*}
		&\int_{\R^3}\<\D^{k}\D_{\beta} \Big(R^T(Q_L)\big(\p_t Q_L+v_L\cdot \D Q_L\big)R(Q_L)\Big), \D^{k+2} (R^T(Q_L)\D_\beta  Q_LR(Q_L))
		\> \,dx
				\\
				&+\int_{\R^3}\<\D^{k}\D_{\beta} \Big(R^T(Q_L)\big([Q_L, \Omega_L]\big)R(Q_L)\Big), \D^{k+2} (R^T(Q_L)\D_\beta  Q_LR(Q_L)) \> \,dx
				\\
				\leq&\int_{\R^3}\<\D^{k}\D_{\beta}\p_t Q_L, \D^{k+2}\D_\beta  Q_L\> \,dx+\eta\int_{\R^3}|\D^{k+2} (R^T(Q_L)\D_\beta
Q_LR(Q_L))|^2\,dx
		\\
		&+C(\eta)\int_{\R^3}\sum_{\mu_1+\mu_2+\mu_3=k}|\D^{ \mu_1} \p_t Q_L|^2|\D^{\mu_2}\D R(Q_L)|^2|\D^{\mu_3} R(Q_L)|^2\,dx
		\\
		&+C(\eta)\int_{\R^3}\sum_{\mu_1+\mu_2+\mu_3=k+1}|\D^{ \mu_1}(v_L\cdot\D Q_L+[Q_L,\Omega_L])|^2|\D^{\mu_2} R(Q_L)|^2|\D^{\mu_3}
R(Q_L)|^2\,dx
		\\
		\leq& -\frac 12\frac{d}{dt}\|\D^{k+2} Q_L\|_{L^2(\R^3)}^2 +\frac{\alpha}8\|\D^{k+3} Q_L\|_{L^2(\R^3)}^2+ C\|\D^{k+2}
v_L\|_{L^2(\R^3)}^2
		\\
		&+C(\|\D^{k+1} v_L\|_{L^2(\R^3)}^2+\| \p_tQ_L\|_{H^k(\R^3)}^2+1)(\|\D^{k+2} Q_L\|_{L^2(\R^3)}^2+1)
		.\alabel{kth k+3 Q left}
	\end{align*}
	Substituting \eqref{kth H}-\eqref{kth k+3 Q left} to \eqref{kth Delta Q eq} and integrating in $t$, we find
	\begin{align*}
		&\frac 12 \|\D^{k+2} Q_{L}(s)\|_{L^2(\R^3)}^2 +\int_{\tau_L}^{s}\frac{\alpha}{8}\|\D^{k+3}
Q_L\|^2_{L^2(\R^3)}+\frac{\lambda}{8L}\|\D^{k+2} (Q_L-\pi(Q_L)) \|_{L^2(\R^3)}^2 \,dt
		\\
		\leq& C\int_{\tau_L}^{s}\|\D^{k+2} v_L\|_{L^2(\R^3)}^2\,dt+\int_{\tau_L}^{s}\|\D^{k+2} Q_L\|_{L^2(\R^3)}^2+\frac{1}{L}\|\D^{k+1}
(Q_L-\pi(Q_L)) \|_{L^2(\R^3)}^2\,dt
		\\
		&+C\int_{\tau_L}^{s}(\|\D^{k+2} Q_L\|_{L^2(\R^3)}^2+\frac{1}{L}\|\D^{k+1} (Q_L-\pi(Q_L)) \|_{L^2(\R^3)}^2)(\|\D^{k+2}
Q_L\|_{L^2(\R^3)}^2+1)\,dt
		\\
		&+C\int_{\tau_L}^{s}(\|\D^{k+1} v_L\|_{L^2(\R^3)}^2+\| \p_tQ_L\|_{H^k(\R^3)}^2+1)(\|\D^{k+2} Q_L\|_{L^2(\R^3)}^2+1)\,dt
		.\alabel{kth Delta Q f}
	\end{align*}
Applying $\D^{k+1}$  to \eqref{ROT RBE} and multiplying the resulting expression by
$\D^{k+1}(R^T(Q_L)\p_tQ_LR(Q_L))$, we have
	\begin{align*}
		&\int_{\tau_L}^{s}\int_{\R^3}\Big<\D^{k+1}\Big( R^T(Q_L)(\p_t Q_L+v_L\cdot \D  Q_L
		\\
		&\qquad+[Q_L, \Omega_L])R^T(Q_L)\Big) , \D^{k+1}(R^T(Q_L)\p_tQ_LR(Q_L))\Big>\,dxdt
		\\
		=&\int_{\tau_L}^{s}\int_{\R^3}\<\D^{k+1}\(R^T(Q_L)\MH(Q_L,\D Q_L) R(Q_L)\),\D^{k+1}(R^T(Q_L)\p_tQ_LR(Q_L))\>\,dxdt
		\\
		&+\int_{\tau_L}^{s}\int_{\R^3}\<\D^{k+1}\frac1Lg_B(\tilde Q_L),\D^{k+1}(R^T(Q_L)\p_tQ_LR(Q_L))\>\,dxdt
		.\alabel{kth pt Q}
	\end{align*}
	It follows from \eqref{kth aspt} and \eqref{kth
	Sob1}-\eqref{kth f1f2^2} that
	\begin{align*}
		&\|\D^{k+1}(R^T(Q_L) \p_t Q_LR(Q_L))\|_{L^2(\R^3)}^2
		\\
		\leq& C\|\D^{k+1}\p_t Q_L\|_{L^2(\R^3)}^2+C\|\p_tQ_L\|^2_{H^k(\R^3)}(\|\D^{k+2} Q_L\|^2_{L^2(\R^3)}+1)
		\alabel{kth R pt Q}
\end{align*}
and		
\begin{align*}
		&\|\D^{k+1} \MH(Q_L,\D Q_L)\|_{L^2(\R^3)}^2
		\leq  C\|\D^{k+3} Q_L\|_{L^2(\R^3)}^2+C(\|\D^{k+2} Q_L\|^2_{L^2(\R^3)}+1)^2
		.\alabel{kth R pt Q2}
	\end{align*}
	Using \eqref{sec2 f_E}, \eqref{kth tau}, \eqref{kth lower} and \eqref{kth R pt Q}-\eqref{kth R pt Q2}, we obtain
	\begin{align*}
	&\int_{\tau_L}^{s}\int_{\R^3}\<\D^{k+1}\(R^T(Q_L)\MH(Q_L,\D Q_L) R(Q_L)\),\D^{k+1}(R^T(Q_L)\p_tQ_LR(Q_L))\>\,dxdt
	\\
		\leq&\int_{\tau_L}^{s}\int_{\R^3}\p^2_{p^\nu_{ij} p^\gamma_{mn}}f_E \D^{k}\D^3_{\mu\gamma\nu}( Q_L)_{mn} \D^{k}\D_\beta\p_t(
Q_L)_{ij}\,dx dt
		\\
		&+C\int_{\tau_L}^{s}\int_{\R^3}|\D^{k+1}\p_t Q_L|\sum_{\mu_1+\mu_2=k}|\D^{\mu_1}\D\p^2_{pp}f_E||\D^{\mu_2} \D^2 Q_L| \,dx dt
		\\
		&+C\int_{\tau_L}^{s}\int_{\R^3}|\D^{k+1}\p_t Q_L|\Big(|\D^{k+1}(\p^2_{pQ}f_E\cdot\D Q_L)|+|\D^{k+1}\p_Qf_E|\Big)\,dx dt
		\\
		&+\eta\int_{\tau_L}^{s}\int_{\R^3}|\D^{k+1} \MH(Q_L,\D Q_L)|^2+|\D^{k+1}(R^T(Q_L) \p_t Q_LR(Q_L))|^2\,dx  dt
		\\
		&+C(\eta)\int_{\tau_L}^{s}(\|\D^{k+2}Q_L\|_{L^2(\R^3)}^2+\|\p_tQ_L\|^2_{H^k(\R^3)})\|\D^{k+2} Q_L\|^2_{L^2(\R^3)}\, dt+C
		\\
		\leq& -\int_{\tau_L}^{s}\int_{\R^3} \frac12\p_t\Big(\p^2_{p_{ij}^\nu p_{mn}^\gamma} f_E
\D^{k+1}\D_\gamma(Q_L)_{mn}\D^{k+1}\D_\nu(Q_L)_{ij}\Big)\,dxdt
		\\
		&+\int_{\tau_L}^{s}\int_{\R^3}\frac 12\p_t\p^2_{p_{ij}^\nu p_{mn}^\gamma} f_E
\D^{k+1}\D_\gamma(Q_L)_{mn}\D^{k+1}\D_\nu(Q_L)_{ij}\,dxdt
		\\
		&-\int_{\tau_L}^{s}\int_{\R^3}\D_\nu\p^2_{p^\nu_{ij} p^\gamma_{mn}}f_E \D^{k}\D^2_{\beta\gamma}( Q_L)_{mn} \D^{k}\D_\beta\p_t(
Q_L)_{ij}\,dx dt
		\\
		&+\eta\int_{\tau_L}^{s}\int_{\R^3}|\D^{k+1} \MH(Q_L,\D Q_L)|^2+|\D^{k+1}(R^T(Q_L) \p_t Q_LR(Q_L))|^2\,dx  dt
		\\
		&+\eta\int_{\tau_L}^{s}\int_{\R^3}|\D^{k+1}\p_t Q_L|^2\,dx  dt
		\\
		&+C(\eta)\int_{\tau_L}^{s}(\|\D^{k+2}Q_L\|_{L^2(\R^3)}^2+\|\p_tQ_L\|^2_{H^k(\R^3)})\|\D^{k+2} Q_L\|^2_{L^2(\R^3)}\, dt+C
		\\
		\leq&-\frac {\alpha}4\|\D^{k+2}  Q_{L}(s) \|_{L^2(\R^3)}^2
		+\int_{\tau_L}^{s} \frac\alpha{8}\|\D^{k+3} Q_L\|^2_{L^2(\R^3)}+\frac18\|\D^{k+1}\p_t Q_L\|^2_{L^2(\R^3)}\, dt
		\\
		&+C\int_{\tau_L}^{s}(\|\D^{k+2}Q_L\|_{L^2(\R^3)}^2+\|\p_tQ_L\|^2_{H^k(\R^3)})\|\D^{k+2} Q_L\|^2_{L^2(\R^3)}\, dt+C
		.\alabel{kth H Q_t}
	\end{align*}
	Replacing $\D$ by $\p_t$ in \eqref{eq: R4}, \eqref{eq: R0}-\eqref{eq: R} and choosing $\xi=\D^{2k+2} g_B(\tilde Q)$, we have
	\begin{align*}
		A_L:=&\p_t  R^T(Q_L) [Q_L-\pi(Q_L)]R(Q_L)+R^T(Q_L)[Q_L-\pi(Q_L)]\p_t  R(Q_L)
		\\
		&+\p_t  [R^T(Q_L)-R^T(\pi(Q_L))]\pi(Q_L) R(Q_L)
		\\
		&+R^T(Q_L)\pi(Q_L)\p_t [R(Q_L)-R(\pi(Q_L))]
		\\
		&+\p_t  R^T(\pi(Q_L))\pi(Q_L)[R(Q_L)-R(\pi(Q_L))]
		\\
		&+[R^T(Q_L)-R^T(\pi(Q_L))]\pi(Q_L)\p_t  R(\pi(Q_L))
	\end{align*}
	and
	\begin{align*}
		&\int_{\tau_L}^{s}\int_{\R^3}\<\D^{k+1}\frac1Lg_B(\tilde Q_L),\D^{k+1}(R^T(Q_L)\p_tQ_LR(Q_L))\>\,dxdt
		\\
		=&\frac{(-1)^{k+1}}L\int_{\tau_L}^{s}\int_{\R^3}\<\D^{2k+2} g_B(\tilde Q),\p_t\tilde Q_L-A_L\>\,dxdt
		\\
		=&-\frac 1L\int_{\tau_L}^{s}\int_{\R^3} \D^{k+1}\p_t(\tilde Q_L) _{ij}\D^{k} \(\p^2_{\tilde Q_{ij}\tilde Q_{kl}}  f_B(\tilde Q_L )\D
(\tilde Q_L)_{kl}\)\,dxdt
		\\
		&-\frac 1L\int_{\tau_L}^{s}\int_{\R^3}\<\D^{k+2} g_B(\tilde Q),\D^k A_L\>\,dxdt
		\\
		=&-\frac 1{2L}\int_{\tau_L}^{s}\int_{\R^3}\p_t\(\p^2_{\tilde Q_{ij}\tilde Q_{kl}}  f_B(\tilde Q_L)\D^{k+1} (\tilde
Q_L)_{kl}\D^{k+1}(\tilde Q_L) _{ij}\)\,dxdt
		\\
		&+\frac 1{2L}\int_{\tau_L}^{s}\int_{\R^3}\p_t\(\p^2_{\tilde Q_{ij}\tilde Q_{kl}}  f_B(\tilde Q_L)\)\D^{k+1} (\tilde
Q_L)_{kl}\D^{k+1}(\tilde Q_L) _{ij}\,dxdt
		\\
		&+\frac CL\int_{\tau_L}^{s}\int_{\R^3} |\D^{k}\p_t\tilde Q_L|\sum_{\mu_1+\mu_2=k}|\D^{\mu_1}\D\p^2_{\tilde Q \tilde Q }  f_B(\tilde
Q_L)||\D^{\mu_2}\D \tilde Q_L|\,dxdt
		\\
		&+\frac CL\int_{\tau_L}^{s}\int_{\R^3}\sum_{\mu_1+\mu_2=k} |\D^{\mu_1}\p_t Q_L||\D^{\mu_2}(Q_L-\pi(Q_L))||\D^{k+2}g_B(\tilde
Q_L)|\,dxdt
		\\
		\leq&-\frac 1{2L}\frac{d}{dt}\int_{\tau_L}^{s}\int_{\R^3}\p^2_{\tilde Q_{ij}\tilde Q_{kl}}  f_B\D^{k+1} (\tilde
Q_L)_{kl}\D^{k+1}(\tilde Q_L)_{ij}\,dxdt
		\\
		&+\eta\int_{\tau_L}^{s}\int_{\R^3}|\D^{k}\p_t(R^T(Q_L) Q_LR(Q_L))|^2+|\D^2\p_t \tilde Q|^2+\frac{1}L |\D^{k+2}\tilde Q_L|^2\,dxdt
		\\
		&+C(\eta) \int_{\tau_L}^{s}\int_{\R^3}\frac{1}{L}|\D^k \tilde Q_L|^2|\p_t Q_L|^2\,dxdt
		\\
		&+C(\eta) \int_{\tau_L}^{s}\int_{\R^3}\frac{1}{L}|\D^{k-1}\tilde Q_L|^2\(| \D\p_t Q_L|^2+\frac{1}{L}|\D^{k+1}\tilde Q_L|^2\)\,dxdt
		\\
		&+C(\eta) \int_{\tau_L}^{s}\int_{\R^3} \frac{1}{L^2} \sum_{\mu_1+\mu_2=k} |\D^{\mu_1}\D\p^2_{\tilde Q \tilde Q }  f_B(\tilde Q_L)|^2
|\D^{\mu_2}\D \tilde Q_L|^2 \,dxdt
		\\
		\leq&- \frac{\lambda}{4L} \|\D^{k+1}( Q_L-\pi(Q_L))(s)\|_{L^2(\R^3)}^2 +C\int_{\tau_L}^{s}\frac{1}{L^2}\|\D^{k+1}
(Q_L-\pi(Q_L))\|_{L^2(\R^3)}^4 \,dt
		\\
		&+\int_{\tau_L}^{s} \frac\lambda{8L} \|\D^{k+2}( Q_L-\pi(Q_L))\|_{L^2(\R^3)}^2+\frac18\|\D^{k+1}\p_t Q_L\|^2_{L^2(\R^3)}\,dt
		\\
		&+C\int_{\tau_L}^{s}\|\D^{k+2} Q_L\|_{L^2(\R^3)}^2\(\frac{1}L\|\D^{k+1} (Q_L-\pi(Q_L))\|_{L^2(\R^3)}^2 +\|\p_t
Q_L\|_{H^k(\R^3)}^2\)\,dt
		\\
		&+C\int_{\tau_L}^{s}\frac{1}L\|\D^{k+1} (Q_L-\pi(Q_L))\|_{L^2(\R^3)}^2\|\p_t Q_L\|_{H^k(\R^3)}^2\,dt
		.
		\alabel{kth g_B Q_t p3}
	\end{align*}
	Using \eqref{kth R pt Q}, we deduce the left-hand side of \eqref{kth pt Q} to
	\begin{align*}
	&-\int_{\tau_L}^{s}\int_{\R^3}\Big<\D^{k+1}\Big( R^T(Q_L)(\p_t Q_L+v_L\cdot \D  Q_L
			\\
			&\qquad\qquad\qquad+[Q_L, \Omega_L])R^T(Q_L)\Big) , \D^{k+1}(R^T(Q_L)\p_tQ_LR(Q_L))\Big>\,dxdt
			\\
		\leq& -\int_{\tau_L}^{s}\int_{\R^3}\<\D^{k+1}\p_t Q_L,\D^{k+1}
\p_tQ_L\>\,dxdt
\\
&+\eta\int_{\tau_L}^{s}\int_{\R^3}|\D^{k+1}(R^T(Q_L)\p_tQ_LR(Q_L))|^2\,dxdt
		\\
		&+C\int_{\tau_L}^{s}\int_{\R^3}\sum_{\mu_1+\mu_2+\mu_3=k}|\D^{\mu_1}\p_tQ_L|^2|\D^{\mu_2} \D R(Q_L)|^2|\D^{\mu_3} R(Q_L)|^2\,dxdt
		\\
		&+C\int_{\tau_L}^{s}\int_{\R^3}\sum_{\mu_1+\mu_2+\mu_3=k+1}\hspace*{-4ex}|\D^{\mu_1}(v_L\cdot \D Q_L+[Q_L,\Omega_L])|^2|\D^{\mu_2}
R(Q_L)|^2|\D^{\mu_3} R(Q_L)|^2\,dxdt
		\\
		\leq&-\int_{\tau_L}^{s}\frac34\|\D^{k+1}\p_t Q_L\|_{L^2(\R^3)}^2+C\|\D^{k+2} v_L\|_{L^2(\R^3)}^2 \,dt
		\\
		&+C\int_{\tau_L}^{s}(\|\D^{k+1} v_L\|_{L^2(\R^3)}^2+\|\p_t Q_L\|_{H^k(\R^3)}^2+1)(\|\D^{k+2} Q_L\|_{L^2(\R^3)}^2+1)\,dt
		.\alabel{kth k+1 Q_t}
	\end{align*}
	Substituting \eqref{kth H Q_t}-\eqref{kth k+1 Q_t} into \eqref{kth pt Q}, we have
	\begin{align*}
		&\frac {\alpha}4 \|\D^{k+2} Q_{L}(s) \|_{L^2(\R^3)}^2+\frac{\lambda}{2L} \|\D^{k+1}( Q_L-\pi(Q_L))(s)\|_{L^2(\R^3)}^2
		+\frac12\int_{\tau_L}^{s}\|\D^{k+1}\p_t Q\|_{L^2(\R^3)}^2 \,dt
		\\
		&\leq C\int_{\tau_L}^{s}\|\D^{k+2} v_L\|_{L^2(\R^3)}^2 \,dt+ \frac{\lambda}{8L}\int_{\tau_L}^{s} \|\D^{k+2}(
Q_L-\pi(Q_L))\|_{L^2(\R^3)}^2 \,dt
		\\
		&
		+\frac18\int_{\tau_L}^{s}\alpha\|\D^{k+3} Q_L\|_{L^2(\R^3)}^2 \,dt+\frac{C}{L^2}\int_{\tau_L}^{s} \|\D^{k+1}
(Q_L-\pi(Q_L))\|_{L^2(\R^3)}^4\,dt
		\\
		&+C\int_{\tau_L}^{s}(\|\D^{k+1} v_L\|_{L^2(\R^3)}^2+\|\p_t Q_L\|_{H^k(\R^3)}^2)\|\D^{k+2} Q_L\|_{L^2(\R^3)}^2+\|\D^{k+2} Q_L\|_{L^2(\R^3)}^4\,dt
		\\
		&+\frac CL\int_{\tau_L}^{s}\(\|\D^{k+2} Q_L\|_{L^2(\R^3)} ^2+\|\p_t Q_L\|_{H^k(\R^3)}^2\) \|\D^{k+1} (Q_L-\pi(Q_L))\|_{L^2(\R^3)}^2
\,dt
		.\alabel{kth k+3 Q final}
	\end{align*}
	Combining \eqref{kth k+3 Q final} with \eqref{kth Delta Q f}   yields
	\begin{align*}
		&\|\D^{k+2}  Q_{L}(s) \|_{L^2(\R^3)}^2+\frac{1}{L}\|\D^{k+1}( Q_L-\pi(Q_L))(s)\|_{L^2(\R^3)}^2
		\\
		&+\int_{\tau_L}^{s} \|\D^{k+3} Q_L\|^2_{L^2(\R^3)}+\|\D^{k+1}\p_t Q\|_{L^2(\R^3)}^2 +\frac{1}{L}\|\D^{k+2} (Q_L-\pi(Q_L))
\|_{L^2(\R^3)}^2 \,dt
		\\
		\leq&C\int_{\tau_L}^{s}\|\D^{k+2} v_L\|_{L^2(\R^3)}^2 \,dt+C\int_{\tau_L}^{s}\(\frac{1}{L}\|\D^{k+1} (Q_L-\pi(Q_L))\|_{L^2(\R^3)}^2
+1\)^2\,dt
		\\
		&+C\int_{\tau_L}^{s}(\|\D^{k+2} Q_L\|_{L^2(\R^3)}^2+\|\D^{k+1} v_L\|_{L^2(\R^3)}^2+\|\p_t Q_L\|_{H^k(\R^3)}^2)\|\D^{k+2}
Q_L\|_{L^2(\R^3)}^2\,dt
		\\
		&+C\int_{\tau_L}^{s}\(\|\D^{k+2} Q_L\|_{L^2(\R^3)} ^2+\|\p_t Q_L\|_{H^k(\R^3)}^2\)\frac{1}{L}\|\D^{k+1} (Q_L-\pi(Q_L))\|_{L^2(\R^3)}^2
\,dt
		.\alabel{kth Q}
	\end{align*}
	To estimate $\D^{k+2} v_L$ in \eqref{kth Q}, we apply $\D^{k+1}$ to \eqref{RBE1} and multiply it by $\D^{k+1} v_L$ to obtain
	\begin{align*}
		&\frac{1}{2}\|\D^{k+1} v_L(s)\|_{L^2(\R^3)}^2+ \int_{\tau_L}^{s}\|\D^{k+2} v_L\|_{L^2(\R^3)}^2dt
		\\
		=& \int_{\tau_L}^{s}\int_{\R^3}\D^{k+1}\(\p_{p^j_{mn}}
		f_E\D_i (Q_L)_{mn} -[Q_L, \MH(Q_L,\D Q_L)]_{ij}\)\D^{k+1}\D_j (v_L)_i\,dxdt
		\\
		\leq&- \int_{\tau_L}^{s}\int_{\R^3}\D^{k+1}[Q_L, \MH(Q_L,\D Q_L)]_{ij}\D^{k+1}\D_j (v_L)_i\,dxdt+\frac14 \int_{\tau_L}^{s}\|\D^{k+2}
v_L\|_{L^2(\R^3)}^2dt
		\\
		&+C\int_{\tau_L}^{s}\int_{\R^3}\sum_{\mu_1+\mu_2=k+1}|\D^{\mu_1}\p_{p}
		f_E|^2|\D^{\mu_2}\D Q_L|^2\,dx.
		\\
		\leq&\frac14 \int_{\tau_L}^{s}\|\D^{k+2} v_L\|_{L^2(\R^3)}^2dt+C\int_{\tau_L}^{s}(\|\D^{k+2} Q_L\|_{L^2(\R^3)}^2+1)^2\,dt
		\\
		&- \int_{\tau_L}^{s}\int_{\R^3}\D^{k+1}[Q_L, \MH]_{ij}\D^{k+1}\D_j (v_L)_i\,dxdt
		.\alabel{kth p2}
	\end{align*}
	The last step follows from the argument in \eqref{kth lower}.
	
	In order to cancel the Lie bracket term in \eqref{kth p2}, we differentiate \eqref{ROT RBE},  multiply by
$\D^{k+1}\(R^T(Q_L)\big(\MH(Q_L,\D Q_L) +\frac 1Lg_B(Q_L)\big)R(Q_L)\)$ and combine with \eqref{kth H +gB}, \eqref{kth p2.3} to obtain
	\begin{align*}
		&\int_{\tau_L}^{s}\int_{\R^3}\left|\D^{k+1}\(R^T(Q_L)\big(\MH(Q_L,\D Q_L) +\frac 1Lg_B(Q_L)\big)R(Q_L)\)\right|^2\,dxdt
		\\
		=&\int_{\tau_L}^{s}\int_{\R^3}\Big<\D^{k+1}(R^T(Q_L)(\p_tQ_L+v\cdot \D  Q_L+[Q_L, \Omega_L])R(Q_L)),
		\\
		&\qquad\qquad\D^{k+1}\big(R^T(Q_L)\big(\MH(Q_L,\D Q_L) +\frac 1Lg_B(Q_L)\big)R(Q_L)\big)\Big>\,dxdt.
		\alabel{kth lie}
	\end{align*}
	Recall from \eqref{kth H Q_t}-\eqref{kth g_B Q_t p3} that
	\begin{align*}
	&\int_{\tau_L}^{s}\int_{\R^3}\Big<\D^{k+1}(R^T(Q_L)\p_tQ_LR(Q_L)),
			\\
			&\qquad\qquad\D^{k+1}\big(R^T(Q_L)\big(\MH(Q_L,\D Q_L) +\frac 1Lg_B(Q_L)\big)R(Q_L)\big)\Big>\,dxdt
			\\
		\leq&-\(\frac{\alpha}{4}\|\D^{k+2}Q_L(s)\|^2_{L^2(\R^3)}+\frac {\lambda}{2L} \|\D^{k+1} (Q_L-\pi(Q_L))(s)\|^2_{L^2(\R^3)}\)
		\\
		&\hspace{-4ex}+\eta_1\int_{\tau_L}^{s}\|\D^{k+3} Q_L\|_{L^2(\R^3)}^2+\|\D^{k+1}\p_t Q_L\|^2_{L^2(\R^3)}+ \frac{1}{L}\|\D^{k+2}(
	Q_L-\pi(Q_L))\|_{L^2(\R^3)}^2 \,dt
		\\
		&+C\int_{\tau_L}^{s}\|\D^{k+2} Q_L\|_{L^2(\R^3)} ^2\(\|\D^{k+2}Q_L\|_{L^2(\R^3)}^2+\|\p_t Q_L\|_{H^k(\R^3)}^2\)\,dt
		\\
		&+C\int_{\tau_L}^{s}\frac{1}L\|\D^{k+1} (Q_L-\pi(Q_L))\|_{L^2(\R^3)}^2\( \|\D^{k+2}Q_L\|_{L^2(\R^3)}^2+\|\p_t Q_L\|_{H^k(\R^3)}^2\)\,dt
		\\
		&+C\int_{\tau_L}^{s}\(\frac{1}L\|\D^{k+1} (Q_L-\pi(Q_L))\|_{L^2(\R^3)}^2+1\)^2 \,dt
		.\alabel{kth H +gB}
	\end{align*}
 We apply Lemma \ref{Lie} to $\Delta^{k+1}\Omega_L$ with $A=Q_L, B=\MH(Q_L,\D Q_L)+\frac 1Lg_B( Q_L), F=\Delta^{k+1}\Omega_L $ and obtain
	\begin{align*}
		&\<[Q_L , \Delta^{k+1}\Omega_L ],\MH(Q_L,\D Q_L)+\frac 1Lg_B( Q_L)\>=\Delta^{k+1}\D_jv_i[Q_L,\MH(Q_L,\D Q_L)]_{ij}.
		\alabel{Lie k}
	\end{align*}
Note from  \eqref{RBE3} that
\begin{align*}
	|\D^k(\MH(Q_L,\D Q_L)+\frac 1Lg_B(Q_L))|^2\leq&C(|\D^k \p_t Q_L|^2+\sum_{\mu_1=\mu_2=k+1}|\D^{\mu_1} Q_L|^2|\D^{\mu_2} v_L|^2)
	.\alabel{H gB}
\end{align*}
	Then using \eqref{kth aspt}, \eqref{kth Sob1}-\eqref{kth f1f2^2}, \eqref{Lie k} and \eqref{H gB}, we find
	\begin{align*}
	&\int_{\tau_L}^{s}\int_{\R^3}\Big<\D^{k+1}(R^T(Q_L)[Q_L, \Omega_L]R(Q_L)),
			\\
			&\qquad\qquad\D^{k+1}\big(R^T(Q_L)\big(\MH(Q_L,\D Q_L) +\frac 1Lg_B(Q_L)\big)R(Q_L)\big)\Big>\,dxdt
			\\
		\leq&(-1)^{k+1}\int_{\R^3}\<\Delta^{k+1}[Q_L , \Omega_L ],\MH(Q_L,\D Q_L)+\frac 1Lg_B(Q_L)\> \,dx
		\\
		&+\eta\int_{\R^3}|\D^{k+1} (\MH(Q_L,\D Q_L)+\frac 1Lg_B(Q_L) )|^2+|\D^{k+1}(R^T(Q_L)[Q_L , \Omega_L ]R(Q_L))|^2\,dx
		\\
		&+C(\eta)\int_{\R^3}\sum_{\mu_1+\mu_2+\mu_3=k}|\D^{\mu_1}[Q_L , \Omega_L ]|^2|\D^{\mu_2}\D R(Q_L)|^2|\D^{\mu_3} R(Q_L)|^2\,dx
		\\
		&+C(\eta)\int_{\R^3}\sum_{\mu_1+\mu_2+\mu_3=k}\hspace*{-4ex}|\D^{\mu_1}(\MH(Q_L,\D Q_L)+\frac {g_B(Q_L)}L)|^2|\D^{\mu_2}\D
R(Q_L)|^2|\D^{\mu_3}R(Q_L)|^2 \,dx
		\\
		\leq& (-1)^{k+1}\int_{\R^3}\D^{2k+2}\D_j(v_L)_i[Q_L,(\MH(Q_L,\D Q_L)+\frac 1Lg_B(Q_L) )]_{ij}\,dx
		\\
		&+C\int_{\R^3}|\D^{k+1} \Omega_L|\sum_{\substack{\mu_1 +\mu_2+= k}}|\D^{\mu_1}\D Q_L||\D^{\mu_2}(\MH(Q_L,\D Q_L)+\frac 1Lg_B(Q_L)
)|\,dx
		\\
		&+\eta\int_{\R^3}|\D^{k+1} (\MH(Q_L,\D Q_L)+\frac 1Lg_B(Q_L) )|^2+|\D^{k+1}(R^T(Q_L)[Q_L , \Omega_L ]R(Q_L))|^2\,dx
		\\
		&+C(\|\D^{k+1} v_L\|_{L^2(\R^3)}^2+1)(\|\D^{k+2} Q_L\|_{L^2(\R^3)}^2+1)
		\\
		\leq& \int_{\R^3}\D^{k+1}\D_j (v_L)_i\D^{k+1}[Q_L, \MH(Q_L,\D Q_L)]_{ij}\,dx
		\\
		&+\frac14 \|\D^{k+2} v_L\|_{L^2(\R^3)}^2+\eta_1(\|\D^{k+2} Q_L\|_{L^2(\R^3)}^2+\|\p_t Q_L\|_{H^{k+1}}^2)
		\\
		& +C(\|\D^{k+1} v_L\|_{L^2(\R^3)}^2+1)(\|\D^{k+2} Q_L\|_{L^2(\R^3)}^2+1)
		.\alabel{kth p2.3}
	\end{align*}

	We substitute \eqref{kth H +gB}-\eqref{kth p2.3} into \eqref{kth lie} and find
	\begin{align*}
		&\int_{\tau_L}^{s}\int_{\R^3}\left|\D^{k+1}\(R^T(Q_L)\big(\MH(Q_L,\D Q_L) +\frac 1Lg_B(Q_L)\big)R(Q_L)\)\right|^2\,dxdt
		\\
		\leq&\int_{\tau_L}^{s} \int_{\R^3}\D^{k+1}\D_j (v_L)_i\D^{k+1}[Q_L, \MH(Q_L,\D Q_L)]_{ij}\,dx+\frac14 \|\D^{k+2}
v_L\|_{L^2(\R^3)}^2\,dt
		\\
		&\hspace{-3ex}+2\eta_1\int_{\tau_L}^{s}\|\D^{k+3} Q_L\|_{L^2(\R^3)}^2+\|\D^{k+1}\p_t Q_L\|^2_{L^2(\R^3)}+ \frac{1}L\|\D^{k+2}(
Q_L-\pi(Q_L))\|_{L^2(\R^3)}^2\,dt
		\\
		&\hspace{-3ex}+C\int_{\tau_L}^{s}\|\D^{k+2} Q_L\|_{L^2(\R^3)} ^2\(\|\D^{k+2}Q_L\|_{L^2(\R^3)}^2+\|\D^{k+1} v_L\|_{L^2(\R^3)}^2+\|\p_t
Q_L\|_{H^k(\R^3)}^2\)\,dt
		\\
		&+C\int_{\tau_L}^{s}\frac{1}L\|\D^{k+1} (Q_L-\pi(Q_L))\|_{L^2(\R^3)}^2\( \|\D^{k+2}Q_L\|_{L^2(\R^3)}^2+\|\p_t
Q_L\|_{H^k(\R^3)}^2\)\,dt
		\\
		&+C\int_{\tau_L}^{s}\(\frac{1}L\|\D^{k+1} (Q_L-\pi(Q_L))\|_{L^2(\R^3)}^2+1\)^2
		\\
		&+\frac12\|\D^{k+1}(R^T(Q_L)(\MH(Q_L,\D Q_L)+\frac 1Lg_B(Q_L))R(Q_L))\|_{L^2(\R^3)}^2\,dt.
		\alabel{kth p2.1}
	\end{align*}
By adding \eqref{kth p2.1} to \eqref{kth p2},  we obtain
	\begin{align*}
		& \frac{1}{2}\|\D^{k+1} v_L(s)\|_{L^2(\R^3)}^2+\frac 14\int_{\tau_L}^{s}\|\D^{k+2} v_L\|_{L^2(\R^3)}^2\,dt
		\\
		\leq &2\eta_1\int_{\tau_L}^{s}\|\D^{k+3} Q_L\|_{L^2(\R^3)}^2+\|\D^{k+1}\p_t Q_L\|^2_{L^2(\R^3)}+ \frac{1}L\|\D^{k+2}(
Q_L-\pi(Q_L))\|_{L^2(\R^3)}^2\,dt
		\\
		&\hspace{-2ex}+C\int_{\tau_L}^{s}\|\D^{k+2} Q_L\|_{L^2(\R^3)} ^2\(\|\D^{k+2}Q_L\|_{L^2(\R^3)}^2+\|\D^{k+1} v_L\|_{L^2(\R^3)}^2+\|\p_t
Q_L\|_{H^k(\R^3)}^2\)\,dt
		\\
		&+C\int_{\tau_L}^{s}\frac{1}L\|\D^{k+1} (Q_L-\pi(Q_L))\|_{L^2(\R^3)}^2\( \|\D^{k+2}Q_L\|_{L^2(\R^3)}^2+\|\p_t
Q_L\|_{H^k(\R^3)}^2\)\,dt
		\\
		&+C\int_{\tau_L}^{s}\(\frac{1}L\|\D^{k+1} (Q_L-\pi(Q_L))\|_{L^2(\R^3)}^2+1\)^2\,dt
		.\alabel{kth v all}
	\end{align*}
	Substituting \eqref{kth Q} into \eqref{kth v all}, choosing sufficiently small $\eta_1$ and combining it with \eqref{kth Q}, we conclude
	\begin{align*}
		&\|\D^2  Q_{L}(s) \|_{L^2(\R^3)}^2+\|\D^{k+1} v_L(s)\|_{L^2(\R^3)}^2+\frac{1}{L}\|\D^{k+1}( Q_L-\pi(Q_L))(s)\|_{L^2(\R^3)}^2
		\\
		&+\int_{\tau_L}^{s} \|\D^{k+3} Q_L\|^2_{L^2(\R^3)}+\|\D^{k+2} v_L\|_{L^2(\R^3)}^2+\|\D^{k+1}\p_t Q\|_{L^2(\R^3)}^2\,dt
		\\
		&+ \int_{\tau_L}^{s}\frac{1}{L}\|\D^{k+2} (Q_L-\pi(Q_L)) \|_{L^2(\R^3)}^2 \,dt
		\\
		\leq&C\int_{\tau_L}^{s}\(\|\D^{k+2} Q_L\|_{L^2(\R^3)}^2+\|\D^{k+1} v_L\|_{L^2(\R^3)}^2+\frac{1}{L}\|\D^{k+1}
(Q_L-\pi(Q_L))\|_{L^2(\R^3)}^2 \)
		\\
		&\qquad\qquad\Big(\|\D^{k+2} Q_L\|_{L^2(\R^3)}^2+\|\D^{k+1} v_L\|_{L^2(\R^3)}^2+\|\p_t Q_L\|_{H^k(\R^3)}^2
		\\
		&\qquad\qquad+\frac{1}{L}\|\D^{k+1} (Q_L-\pi(Q_L))\|_{L^2(\R^3)}^2 \Big) \,dt+C
		.\alabel{kth final}
	\end{align*}
	Applying the Gronwall inequality to \eqref{kth final}  with \eqref{kth aspt} for $t\in (\tau_L,s)$, it concludes that \eqref{kth estimates}
holds for $m=k+1$ on the $(\tau,s)$. Since $\tau\geq T_0$ is an arbitrary positive constant,  we prove \eqref{kth estimates} for any $s\in
(\tau,T_M]$ and $m=k+1$,  which completes a proof of this lemma.
\end{proof}

\begin{proof}[\bf Proof of Theorem \ref{thm 3}]	Let $(Q,v)$ be the strong solution  to \eqref{BE1}-\eqref{BE3} in $\R^3\times[0,T^\ast)$ with
initial data $(Q_0,v_0)\in H^2_{Q_e}(\R^3)\times H^1(\R^3)$, where  $T^\ast$ is the maximal existence time. Given any $T\in (0,T^\ast)$, set
	\begin{equation*}
		M= 2\sup_{0\leq t\leq T}\|(\D Q,v)\|_{H^1(\R^3)}^2.
	\end{equation*}
	Using Theorem \ref{thm 2},    there exists a subsequence $(Q_{L},v_{L})$ such that
	\begin{equation*}
		(\D Q_{L},v_{L})\rightarrow(\D Q,v),\qquad\text{in }~~L^\infty(0,T_M;L_{loc}^2(\R^3))\cap L^2(0,T_M;H_{loc}^1(\R^3)).
	\end{equation*}
	Suppose that $T_M<T$. We apply Lemma \ref{kth} with  $k\geq 2$ to obtain
	\[  \sup_{\tau\leq s\leq T_M}\sum_{i=0}^{k}\int_{\R^3}\left(|\D^{i+1}Q_L|^2+|\D^i v_L|^2
 \right)(\cdot,s)\,dx\leq C_k \] for a uniform constant $C_k$ in $L$.
Similarly to Lemma \ref{lem energy}, one can show the energy identity:
	\begin{align*}
		&\int_{\R^3}\(f_E(Q ,\D Q ) +\frac{|v |^2}{2}\)(\cdot,s)\,dx+\int_0^s\int_{\R^3}\left|H(Q,\D Q ) \right|^2+|\D v |^2\,dxdt
		\\
		&=\int_{\R^3}\Big(f_E(Q_{0},\D Q_{0}) +\frac{|v_{0}|^2}{2}\Big)\,dx.\alabel{energy Q}
	\end{align*}
		Then   comparing \eqref{energy Q} with \eqref{energy eq} (cf. Lemma 4.3 in \cite{FHM}),
	integrating by parts and using H\"older's inequality, we obtain
	\begin{align*}
		& \lim\limits_{L\rightarrow 0}\|(\D^2 Q_L-\D^2 Q)(T_M)\|_{L^2(\R^3)}^2
		\\
		\leq& \lim\limits_{L\rightarrow 0}\(\int_{\R^3}|(\D Q_L-\D Q)(T_M)|^2\,dx\)^{\frac12}\(\int_{\R^3}|(\D^3 Q_L-\D^3
Q)(T_M)|^2\,dx\)^{\frac12}
		=0.
	\end{align*}
	Similarly, we find
	\begin{equation*}
		\lim\limits_{L\rightarrow 0}\|(\D v_L-\D v)(T_M)\|_{L^2}^2=0,\quad\lim\limits_{L\rightarrow 0}\frac 1L\||\D
(Q_L-\pi(Q_L))|^2)(T_M)\|_{L^2}^2=0.
	\end{equation*}
	Therefore, we obtain
	\begin{align*}
		&\lim\limits_{L\rightarrow 0}\left(\|\D Q_L(T_M)\|_{H^1(\R^3)}^2+\|v_L(T_M)\|_{H^1(\R^3)}^2+\frac1L
\|(Q_L-\pi(Q_L))(T_M)\|_{H^1(\R^3)}^2\right)\\
		=&\|\D Q(T_M)\|_{H^1(\R^3)}^2+\|v(T_M)\|_{H^1(\R^3)}^2\leq\frac{M}{2}.\nonumber
	\end{align*}
	Hence,  for sufficiently small $L$, one has
	\begin{equation*}
		\|\D Q_L(T_M)\|_{H^1(\R^3)}^2+\|v_L(T_M)\|_{H^1(\R^3)}^2+\frac1L \|(Q_L-\pi(Q_L))(T_M)\|_{H^1(\R^3)}^2\leq M.
	\end{equation*}
	Utilizing Proposition \ref{prop Extension} with   initial data $(Q_L(T_M),v_L(T_M))$, we  extend the strong solution $(Q_L,v_L)$ to the
time  $T_1=:\min\{T,2T_M\}>T_M$. That is
	\begin{equation}
		(\D Q_L,v_L)\rightarrow(\D Q,v),\qquad\text{in }~~L^\infty(0,T_1;L^2(\R^3))\cap L^2(0,T_1;H^1(\R^3))
	\end{equation}
and
	\begin{equation}
		(\D Q_L,v_L)\rightarrow(\D Q,v) \qquad\text{in }~~C^\infty(\tau,T_1;C_{loc}^\infty(\R^3)) \quad\text{for  any }\tau>0.
	\end{equation}
	  Repeating the above argument, we establish \eqref{con1}-\eqref{con2} for any $T<T^\ast$ as any sequence $L\to 0$ due to the   uniqueness  of the solution $(Q, v)$. We prove   Theorem
\ref{thm 3}.
\end{proof}

\section{Appendix: Local existence and proof of Theorem \ref{thm loc}}

For any $f(x)\in H^1(\R^3)$, it follows from the Gagliardo–Nirenberg interpolation that
\[\int_{\R^3} |f(x)|^4\,dx\leq \(\int_{\R^3} |f(x)|^2\,dx\)^{\frac12}\(\int_{\R^3} |\D f(x)|^2\,dx\)^{\frac32}.\]
Then  we have
\begin{align*}
	\(\int_{\R^3} |f(x)|^4\,dx\)^\frac12\leq \eta\int_{\R^3} |\D f(x)|^2\,dx+\frac{C}{\eta^3} \int_{\R^3} |f(x)|^2\,dx.\alabel{L4}
\end{align*}
Using \eqref{L4}, we now prove the local existence of \eqref{RBE1}-\eqref{RBE3} with initial data $(Q_{L,0},v_{L,0})$.
\begin{proof}[\bf Proof of Theorem \ref{thm loc}] Without loss of generality, we assume $L=1$ and omit the subscript $L$ in the proof.

Assume that the initial data $(Q_{L,0},v_{L,0})\in H^2_{Q_e}(\R^3)\times H^1(\R^3)$ satisfies $\div
v_{L,0}=0$, $\|Q_{L,0}\|_{L^\infty(\R^3)}\leq K$   in the assumption of Theorem   \ref{thm loc} and set
\begin{align}
	\|Q_{L,0}\|^2_{H^2_{Q_e}(\R^3)}+\|v_{L,0}\|^2_{H^1(\R^3)} = M_1
	.\alabel{t=0}
\end{align}
For a given $T$, define
the space
	\begin{align*}
		\mathcal V(0,T)=\Big\{(Q,v): &\sup_{0\leq t\leq T}\big(\| Q (t)\|_{H^2_{Q_e}(\R^3)}^2+\|v (t)\|_{H^1(\R^3)}^2 \big)+\|\D^3 Q
\|_{L^2(0,T ;L^2(\R^3))}^2
		\\
		&+\|\p_t Q \|_{L^2(0,T ;H^1(\R^3))}^2 +\|\D^2 v \|_{L^2(0,T;L^2(\R^3))}^2 \leq  C_1M_1,
		\\
		& \D\cdot v=0,\quad \sup_{0\leq t\leq T} \| Q (t)\|_{L^\infty(\R^3)}\leq 2K  \Big\}
	\end{align*}
	for  a uniform constant $C_1$ in $M_1$ to be chosen later.

 For a given pair $(Q_m,v_m)\in \mathcal V(0,T_m)$,  there exist  a $T_{m+1}\leq T_m$ and  a unique strong solution $(\D Q_{m+1},v_{m+1})\in L^2(0,T_{m+1};H^2(\R^3))\cap L^\infty (0,T_{m+1};H^1(\R^3))$
 of
  the linearized system:
	\begin{align*}
		&(\p_t -\Delta)v_{m+1} +\D P_{m+1}-\D\cdot[Q_{m},h(Q_m,Q_{m+1})]
		\\
		&=-v_m\cdot \D v_m -\D\cdot\sigma_{ij}(Q_m,\D Q_m),
		\alabel{v +1}
		\\[2ex]
		&\D\cdot v_{m+1}=0,\alabel{div v +1}
		\\[2ex]
		&\p_tQ_{m+1}+[Q_{m}, \Omega_{m+1}]-h(Q_m,Q_{m+1})
		=   -v_m \cdot\D Q_{m}+g_B(Q_m)
		\alabel{Q +1}
	\end{align*}
with initial data $(Q_0,v_0)$, where
\begin{align*}
	h_{ij}(Q_m,Q_{m+1}):=&\frac12\(\D_\beta[\p_{p_{ij}^\beta} f_E(Q_m,\D Q_{m+1})]+\D_\beta[\p_{p_{ji}^\beta} f_E(Q_m,\D Q_{m+1})]\)
	\\
	&- \frac12\(\p_{Q_{ij}} f_E(Q_m,\D Q_m)-\p_{Q_{ji}} f_E(Q_m,\D Q_m)\)
	\\
	&-\frac{\delta_{ij}}3\sum_{l=1}^3\(\D_\beta[\p_{p_{ll}^\beta} f_E(Q_m,\D Q_{m+1})]-\p_{Q_{ll}} f_E(Q_m,\D Q_m)\)
	.\alabel{Mol m}
\end{align*}

\noindent {\bf Claim 1}:  There exists a uniform $T_{M_1}$ in $m$  such that
 $T_{M_1}\leq T_{m+1}$ for all $m\geq 1$ and  $(Q_{m+1},v_{m+1})\in \mathcal V(0,T_{M_1})$.
	
To establish the $L^2$-norm of $\D^3 Q_{m+1}.$  we multiply \eqref{Q +1} with $\Delta^2 Q_{m+1}$ and observe
	\begin{align*}
		&\int_0^s\int_{\R^3}\<\(\p_tQ_{m+1}+[Q_m, \Omega_{m+1}]-\D_\beta\p_{p^\beta}f_E(Q_m,\D Q_{m+1})\),\Delta^2 Q_{m+1}\>\,dxdt
		\\
		=&\int_0^s\int_{\R^3}\< \p_Qf_E(Q_m,\D Q_m)-v_m \cdot\D Q_{m}+g_B(Q_m),\Delta^2 Q_{m+1}\>\,dxdt
		.\alabel{D2 Q+1}
	\end{align*}
We can compute the second term in the left-hand side of \eqref{D2 Q+1}
\begin{align*}
	&\int_0^s\int_{\R^3}\<[Q_m, \Omega_{m+1}] ,\Delta^2 Q_{m+1}\>\,dxdt
	\\
	\leq&  \frac{\alpha}{8} \int_0^s\int_{\R^3} |\D^3 Q_{m+1}|^2\,dxdt+C\int_0^s\int_{\R^3} |\D^2 v_{m+1}|^2\,dxdt
	\\
	&+C\int_0^s\int_{\R^3} |\D v_{m+1}|^2\,dx \int_{\R^3}|\D^2  Q_{m}|^2\,dxdt+\int_0^s\int_{\R^3} |v_{m+1}|^2 |\D^2  Q_{m}|^2\,dxdt
	.\alabel{Omega m}
\end{align*}
Using the Sobolev inequality and \eqref{t=0}, we have
\begin{align*}
	\sup_{0\leq t\leq T_{m+1}}\(\|\D Q_{m+1}(t)\|^2_{H^1(\R^3)}+\|v_{m+1}(t)\|^2_{H^1(\R^3)}\)\leq CM_1
	.\alabel{sup t m}
\end{align*}
We employ the inequalities \eqref{L4} and \eqref{sup t m} to obtain
\begin{align*}
	&\int_0^s\int_{\R^3}|v_{m+1}|^2 |\D^2  Q_{m}|^2 \,dxdt
	\leq \int_0^s\(\int_{\R^3}|v_{m+1}|^4\,dx\)^{\frac12}\(\int_{\R^3}|\D^2 Q_m|^4 \,dx\)^{\frac12}dt
	\\
	\leq &C\int_0^s\(\int_{\R^3}|\D v_{m+1}|^2+|v_{m+1}|^2\,dx\)\(\int_{\R^3}\eta_1|\D^3 Q_m|^3 +\frac{C}{\eta^3_1}|\D^2 Q_m|^2\,dx\)dt
	\\
	\leq&CM_1^2(\eta_1 +\frac{s}{\eta_1^3})
	.\alabel{key est}
\end{align*}
Substituting \eqref{key est} to  \eqref{Omega m} yields
\begin{align*}
	&\int_0^s\int_{\R^3}\<[Q_m, \Omega_{m+1}] ,\Delta^2 Q_{m+1}\>\,dxdt
	\\
	\leq& \frac{\alpha}{8} \int_0^s\int_{\R^3} |\D^3 Q_{m+1}|^2\,dxdt+C\int_0^s\int_{\R^3} |\D^2 v_{m+1}|^2\,dxdt
	 +CM_1^2(\eta_1 +s+\frac{s}{\eta_1^3})
	.\alabel{Omega m2}
\end{align*}
To estimate  the third term on the left-hand side of \eqref{D2 Q+1} , it follows from integrating by parts and using \eqref{sec2 f_E} that
	\begin{align*}
		&-\int_0^s\int_{\R^3}\D_\beta\p_{p^\beta_{ij}}f_E(Q_m,\D Q_{m+1})\Delta^2( Q_{m+1})_{ij}\,dxdt
		\\
		=&-\int_0^s\int_{\R^3}\p^2_{p^\beta_{ij}p^\nu_{kl}}f_E(Q_m,\D Q_{m+1})\D^3_{\mu\gamma\nu} (Q_{m+1})_{kl}\D^3_{\beta\gamma\mu}
(Q_{m+1})_{ij}\,dxdt
		\\
		&+\int_0^s\int_{\R^3}\D_{\gamma}\p^2_{p^\beta_{ij} p_{kl}^\nu }f_E(Q_m,\D Q_{m+1})\D^2_{\mu \nu}  (Q_{m+1})_{kl}\D^3_{\beta\gamma\mu}
(Q_{m+1})_{ij}\,dxdt
		\\
		&+\int_0^s\int_{\R^3}\D_{\gamma}\(\p^2_{p^\beta_{ij} Q_{kl}}f_E(Q_m,\D Q_{m+1}) \D_{\mu}( Q_m)_{kl}\)\D^3_{\beta\gamma\mu}
(Q_{m+1})_{ij}\,dxdt
		\\
		\leq&-\frac{3\alpha}8 \int_0^s\int_{\R^3} |\D^3 Q_{m+1}|^2\,dx +C\int_0^s\int_{\R^3}|\D Q_{m+1}|^2 (|\D^2 Q_m|^2+|\D Q_m|^4)\,dxdt
		\\
		\leq&-\frac{3\alpha}8 \int_0^s\int_{\R^3} |\D^3 Q_{m+1}|^2\,dx+CM_1^2(\eta_1 +s+\frac{s}{\eta_1^3}),
		\alabel{fE m}
		\end{align*}
where we used the argument of \eqref{key est} in the last calculation.

Using the argument in \eqref{key est} again, we obtain
\begin{align*}
	&\int_0^s\int_{\R^3}|\D\( \p_Qf_E(Q_m,\D Q_m)-v_m \cdot\D Q_{m}+g_B(Q_m)\)|^2\,dxdt
	\\
	\leq& C\int_0^s\int_{\R^3}|\D Q_m|^2|\D Q_m|^4+ |\D Q_m|^2|\D^2 Q_m|^2+|\D v_m|^2|\D Q_{m}|^2\,dxdt
	\\
	&+C\int_0^s\int_{\R^3}|v_m|^2|\D^2 Q_{m}|^2+|\D Q_m|^2 \,dxdt \leq CM_1^2(\eta_1 +s+\frac{s}{\eta_1^3})+CM_1s
	.\alabel{D2 Q+1 r}
\end{align*}
In view of  \eqref{Omega m2}-\eqref{D2 Q+1 r},  we deduce \eqref{D2 Q+1} to
	\begin{align*}
		&\frac12 \int_{\R^3}|\D^2 Q_{m+1}|^2(\cdot,s)\,dx+\frac{\alpha}{4}\int_0^s \int_{\R^3} |\D^3 Q_{m+1}|^2\,dxdt
		\\
		\leq&\frac12 \int_{\R^3}|\D^2 Q_0|^2\,dx+C \int_0^s\int_{\R^3} |\D^2 v_{m+1}|^2\,dxdt +CM_1^2(\eta_1 +s+\frac{s}{\eta_1^3})+CM_1s
		.\alabel{D2 Q m}
	\end{align*}
In order to estimate the $L^2$-norm of $\D \p_t Q_{m+1}$, we multiply \eqref{Q +1} by $\Delta \p_t Q_{m+1}$ and compute
\begin{align*}
	&\<\(\p_tQ_{m+1}+[Q_m, \Omega_{m+1}]-\D_\beta\p_{p^\beta}f_E(Q_m,\D Q_{m+1})\),\Delta \p_t Q_{m+1}\>
	\\
	=&\<\p_Qf_E(Q_m,\D Q_m)-v_m \cdot\D Q_{m}+g_B(Q_m),\Delta \p_t Q_{m+1}\>
	.\alabel{D pt Q m}
\end{align*}
Using a similar argument to \eqref{Omega m2}, we have
\begin{align*}
	&\int_0^s\int_{\R^3}\<[Q_m, \Omega_{m+1}] ,\Delta \p_t Q_{m+1}\>\,dxdt
	\\
	\leq& \frac{1}{8} \int_0^s\int_{\R^3} |\D \p_t Q_{m+1}|^2\,dxdt+C\int_0^s\int_{\R^3} |\D [Q_m, \Omega_{m+1}] |^2\,dxdt
	\\
	\leq& \frac{1}{8} \int_0^s\int_{\R^3} |\D \p_t Q_{m+1}|^2\,dxdt+CK^2\int_0^s\int_{\R^3} |\D^2 v_{m+1}|^2\,dxdt   +CM_1^2(\eta_1
+s+\frac{s}{\eta_1^3})
	.\alabel{Omega m t}
\end{align*}
In view of \eqref{sec2 f_E}, we compute the third term in \eqref{D pt Q m} in the following
\begin{align*}
	&\int_0^s\int_{\R^3}\<\(-\D_\beta\p_{p^\beta}f_E(Q_m,\D Q_{m+1})\),\Delta\p_t Q_{m+1}\>\,dxdt
	\\
	=&-\int_0^s\int_{\R^3}\p^2_{p^\beta_{ij}p^\gamma_{kl}}f_E(Q_m,\D Q_{m+1})\D^2_{\gamma\nu}( Q_{m+1})_{kl}\p_t\D^2_{\beta\nu}(
Q_{m+1})_{ij}\,dxdt
	\\
	&-\int_0^s\int_{\R^3}\p^2_{p^\beta_{ij}Q{kl}}f_E(Q_m,\D Q_{m+1})\D_{\nu}( Q_{m+1})_{kl}\p_t\D^2_{\beta\nu}( Q_{m+1})_{ij}\,dxdt
	\\
	\leq&-\frac12\int_0^s\frac{d}{dt}\int_{\R^3}\p^2_{p^\beta_{ij}p^\gamma_{kl}}f_E(Q_m,\D Q_{m+1})\D^2_{\gamma\nu}( Q_{m+1})_{kl}
\D^2_{\beta\nu}( Q_{m+1})_{ij}\,dxdt
	\\
	&+C\int_0^s\int_{\R^3}|\D Q_{m+1}|\(|\D \p_t Q_m||\D^2 Q_{m+1}|+|\p_t Q_m||\D^3 Q_{m+1}|\)\,dxdt
	\\
	&+C\int_0^s\int_{\R^3}|\D Q_{m+1}|\(|\D Q_m||\p_t Q_m||\D^2 Q_{m+1}|\)\,dxdt
	\\
	&+C\int_0^s\int_{\R^3}|\p_t \D  Q_{m+1}|\(|\D^2 Q_{m+1}||\D Q_m|+|\D Q_{m+1}||\D Q_m|^2\)\,dxdt
	\\
	&+C\int_0^s\int_{\R^3}|\p_t \D  Q_{m+1}||\D Q_{m+1}||\D^2 Q_m|\,dxdt
	\\
	\leq&- \frac\alpha4  \int_{\R^3}|\D^2 Q_{m+1}|^2(\cdot,s)\,dx+\frac{\Lambda(1+4K^2)}2\int_{\R^3}|\D^2 Q_0|^2\,dx +CM_1^2(\eta_1+s+\frac{s}{\eta_1^3})
	\\
	&+CM_1^5s+\int_0^s\int_{\R^3}\frac{\alpha}{8}|\D^3 Q_{m+1}|^2+\frac{1}{8}|\D \p_t Q_{m+1}|^2\,dxdt,
	\alabel{pt Q+1}
\end{align*}
where in the last step, we used the argument in \eqref{key est} and the following estimate
\begin{align*}
	&C\int_0^s\int_{\R^3}|\D Q_{m+1}|^2 |\D^2 Q_{m+1}|^2\,dxdt\leq \frac{\alpha}{16}\int_0^s\int_{\R^3}|\D^3 Q_{m+1}|^2\,dxdt+CM_1^5s.
\end{align*}
We apply  \eqref{Omega m t}, \eqref{pt Q+1} and \eqref{D2 Q+1 r} to obtain
\begin{align*}
	&\frac\alpha4 \int_{\R^3}|\D^2 Q_{m+1}|^2(\cdot,s)\,dx+\frac1{2} \int^s_0\int_{\R^3} |\D \p_t Q_{m+1}|^2\,dxdt
	\\
	\leq&\frac{\Lambda(1+4K^2)}2 \int_{\R^3}|\D^2 Q_0|^2\,dx+\frac{\alpha}{4}\int^s_0\int_{\R^3}|\D^3 Q_{m+1}|^2\,dxdt
	\\
	&+C\int^s_0\int_{\R^3} |\D^2 v_{m+1}|^2\,dxdt+CM_1^2(\eta_1 +s+\frac{s}{\eta_1^3})+CM_1^5s.
	\alabel{D t Q final m}
\end{align*}
Adding \eqref{D t Q final m} to \eqref{D2 Q m}, we have
\begin{align*}
	&\int_{\R^3}|\D^2 Q_{m+1}|^2(\cdot,s)\,dx+\int^s_0\int_{\R^3}|\D^3 Q_{m+1}|^2+|\D \p_t Q_{m+1}|^2\,dxdt
	\\
	\leq&C\int_{\R^3}|\D^2 Q_0|^2\,dx+C\int_{\R^3} |\D^2 v_{m+1}|^2\,dx +CM_1^2(\eta_1 +s+\frac{s}{\eta_1^3})+CM_1^5s+CM_1s
	.\alabel{Q m final}
\end{align*}
Here $C$ only depends on the following constants $\alpha, K$ and $\Lambda$.

	To estimate $\D^2 v_{m+1}$ in \eqref{Q m final}, we multiply \eqref{v +1} by $-\Delta v_{m+1}$ and compute
	\begin{align*}
		&\frac12\int_{\R^3} |\D v_{m+1}|^2(\cdot,s) \,dx+\int^s_0\int_{\R^3}|\D^2v_{m+1}|^2\,dxdt
		\\
		&-\int^s_0\int_{\R^3}[Q_m,h(Q_m,Q_{m+1})]_{ij}\D_j\Delta (v_{m+1})_i\,dxdt
		\\
		=&\frac12\int_{\R^3} |\D v_0|^2 \,dx-\int^s_0\int_{\R^3}\Big((v_m)_j\D_j (v_m)_i+\D_j\sigma_{ij}(Q_m,\D Q_m)\Big)(\Delta v_{m+1})_i
\,dxdt
		\\
		\leq& \frac12\int_{\R^3} |\D v_0|^2 \,dx+\frac14\int^s_0\int_{\R^3}|\D^2 v_{m+1}|^2\,dxdt+CM_1^2(\eta_1 +\frac{s}{\eta_1^3})
		.\alabel{D v +1}
	\end{align*}
	To cancel the term involving $h(Q_m,Q_{m+1})$ in \eqref{D v +1}, we differentiate \eqref{Q +1} in $x$, multiply  by $\D h(Q_m,Q_{m+1})$
and obtain
	\begin{align*}
		&\int_{\R^3}\<\D_\beta\(\p_t Q_{m+1}+[Q_m, \Omega_{m+1}]\),\D_\beta h(Q_m,Q_{m+1})\>\,dx +\int_{\R^3}|\D h(Q_m,Q_{m+1})|^2\,dx
		\\
		=&\int_{\R^3}\<\D_\beta\(-v_m\cdot \D Q_m +g_B(Q_m)\), \D_\beta h(Q_m,Q_{m+1})\>\,dx
		.\alabel{fE +1}
	\end{align*}
Choosing  $A=Q,B=h(Q_m,Q_{m+1}), F=\Delta\D v$ in  Lemma \ref{Lie}, we obtain
\begin{align*}
	\<[Q_m,\Delta\Omega_{m+1}],h(Q_m,Q_{m+1})\>=\<\Delta\D v_{m+1},[Q_m,h(Q_m,Q_{m+1})]\>
	.\alabel{Lie h}
\end{align*}
Note that \[h(Q_m,Q_{m+1})\leq C(|\D^2 Q_{m+1}|+|\D Q_{m+1}|^2+|\D Q_{m+1}||\D Q_m|).\] Then using \eqref{Lie h}, we compute the second term
in \eqref{fE +1} to  get
\begin{align*}
	&\int_{\R^3}\<\D_\beta[Q_m, \Omega_{m+1}],\D_\beta h(Q_m,Q_{m+1})\>\,dx
	\\
	=&\int_{\R^3}\<[Q_m, \Delta\Omega_{m+1}], h(Q_m,Q_{m+1})\>\,dx
	\\
	&+\int_{\R^3}\<[\Delta Q_m, \Omega_{m+1}]+2[\D Q_m, \D \Omega_{m+1}], h(Q_m,Q_{m+1})\>\,dx
	\\
	=&\int_{\R^3}\<[Q_m, \Delta\Omega_{m+1}], h(Q_m,Q_{m+1})\>\,dx +\int_{\R^3}\<[\D Q_m, \D \Omega_{m+1}], h(Q_m,Q_{m+1})\>\,dx
	\\
	&-\int_{\R^3}\<[\D_\alpha Q_m, \Omega_{m+1}],\D_\alpha h(Q_m,Q_{m+1})\>\,dx
	\\
	\geq&\int_{\R^3}\<\Delta\D v_{m+1},[Q_m,h(Q_m,Q_{m+1})]\>\,dx -\frac14 \int_{\R^3} |\D^2 v_{m+1}|^2\,dx
	\\
	&-\frac12 \int_{\R^3} |\D  h(Q_m,Q_{m+1})|^2\,dx
	-\eta_1\int_{\R^3}|\D^3 Q_m|^2\,dx-CM_1^2 \frac{s}{\eta_1^3}
	.\alabel{lie m}
\end{align*}
We repeat the argument in  \eqref{pt Q+1} for the first term in \eqref{fE +1}, apply Young's inequality to the right-hand side of \eqref{fE
+1}. Then integrate  \eqref{fE +1} in $t$ and combine  with \eqref{D v +1} yield
\begin{align*}
	&\frac12 \int_{\R^3}|\D v_{m+1}(\cdot,s)|^2 \,dx+\frac12\int^s_0\int_{\R^3}|\D^2v_{m+1}|^2\,dxdt
	\\
	\leq&\frac12\int_{\R^3}|\D v_0|^2\,dx+\frac{\Lambda(1+4K^2)}2\int_{\R^3}|\D^2 Q_0|^2\,dx+CM_1^2(\eta_1 +\frac{s}{\eta_1^3})+C(\eta_2)M_1^5s
	\\
	&+ \eta_2\int^s_0\int_{\R^3}|\D^3 Q_{m+1}|^2+|\D \p_t Q_{m+1}|^2\,dxdt
	+C(\eta_2)M_1^2(\eta_1 +s+\frac{s}{\eta_1^3})
	\alabel{D v final s1}
\end{align*}
for some small $\eta_1,\eta_2$. Substituting \eqref{Q m final} into \eqref{D v final s1} and choosing $\eta_2$ sufficiently small, we obtain
the estimates for $v_{m+1}$. Combining the resulting expression with \eqref{Q m final} yields
\begin{align*}
	& \int_{\R^3}(|\D^2 Q_{m+1}|^2+|\D v_{m+1}|^2)(\cdot,s) \,dx
	\\
	&+ \int^s_0\int_{\R^3}|\D^3 Q_{m+1}|^2+|\D \p_t Q_{m+1}|^2+|\D^2v_{m+1}|^2\,dxdt
	\\
	\leq&CM_1+CM_1^2(\eta_1 +s+\frac{s}{\eta_1^3})+CM_1^5s+CM_1s
	.\alabel{D2 final s2}
\end{align*}
Here $C$ only depends on $\alpha,K$ and $ \Lambda$.

It remains to check the $L^2$-norm of the lower order terms in $\mathcal V(0,T)$.  We multiply \eqref{v +1} by $v_{m+1}$ to obtain
\begin{align*}
	&\frac12\int_{\R^3}|v_{m+1}|^2(\cdot,s) \,dx + \frac12 \int^s_0\int_{\R^3}|\D v_{m+1}|^2\,dxdt
	\\
	\leq&\frac12\int_{\R^3}|v_0|^2 \,dx  +C\int^s_0\int_{\R^3}|[Q_{m},h(Q_m,Q_{m+1})]|^2+|\sigma_{ij}(Q_m,\D Q_m)|^2\,dxdt
	\\
	\leq&CM_1+C\int^s_0\int_{\R^3}|\D^2 Q_{m+1}|^2+|\D Q_{m+1}|^4+|\D Q_m|^4\,dxdt
	\leq CM_1+CM_1s
	.\alabel{v low}
\end{align*}
By using the mean value theorem with the fact that $g_B(Q_e)=0$, we find
\begin{align*}
	|g_B(Q_m)|\leq C(K)|Q_m-Q_e|.
	\alabel{3}
\end{align*}
Multiplying \eqref{Q +1} by $\p_t Q_{m+1}$ and $Q_{m+1}-Q_e$ respectively and then using \eqref{3}  yield
\begin{align*}
	&\frac12 \int_{\R^3}|(Q_{m+1}-Q_e)|^2(\cdot,s)\,dx+\frac12\int^s_0\int_{\R^3}| \p_t Q_{m+1}|^2\,dxdt
	\\
	\leq&\frac12\int_{\R^3}|Q_0-Q_e|^2\,dx+\frac12 \int^s_0\int_{\R^3}|Q_{m+1}-Q_e |^2\,dxdt
	\\
	&+ C\int^s_0\int_{\R^3}|[Q_{m}, \Omega_{m+1}]|^2+|h(Q_m,Q_{m+1})|^2+|v_m \cdot\D Q_{m}|^2+|g_B(Q_m)|^2 dxdt
	\\
	\leq& CM_1+CM_1s
	.\alabel{Q m+1 low}
\end{align*}
Note that the estimate of $L^2$-norm of $\D Q_{m+1}$ comes from \eqref{D2 final s2} and \eqref{Q m+1 low}. Now adding \eqref{v low} and \eqref{Q m+1 low} to
\eqref{D2 final s2}, we have
\begin{align*}
	& \big(\| Q (s)\|_{H^2_{Q_e}(\R^3)}^2+\|v (s)\|_{H^1(\R^3)}^2 \big)+\|\D^3 Q \|_{L^2(0,T ;L^2(\R^3))}^2
	\\
	&+\|\p_t Q \|_{L^2(0,T ;H^1(\R^3))}^2 +\|\D^2 v \|_{L^2(0,T_0;L^2(\R^3))}^2
	\\
	\leq&\frac{C_1}4M_1+\frac{C_1}4M_1^2(\eta_1 +s+\frac{s}{\eta_1^3})+\frac{C_1}4M_1^5s+\frac{C_1}4M_1s \leq C_1M_1
	\alabel{Q v m+1}
\end{align*}
for some $C_1$ depending on $\alpha, K$ and $\Lambda$. Here in the last step, we set $\eta_1=M_1^{-1}$ and $s\leq
\min\left\{\frac12M_1^{-4},\frac12M_1^{-1},\frac12\right\}$.

It remains to verify that $\| Q_{m+1} (s)\|_{L^\infty(\R^3)}\leq 2K$. Note from \eqref{Q m+1 low}  that
\begin{align*}
	\int_{\R^3}|Q_{m+1}(\cdot,s)-Q_0|^2\,dx=& \int_{\R^3}\(\int_0^s \p_t Q_{m+1}\,dt\)^2\,dx\leq s\int_0^s  \int_{\R^3}|\p_t Q_{m+1}|^2\,dt\,dx
	\\
	\leq& s \int_{\R^3}\int_0^s |\p_t Q_{m+1}|^2\,dxdt\leq CM_1s(1+s)\leq CM_1s
\end{align*}for $s\leq \frac12$. By using the Gagliardo–Nirenberg interpolation (cf. \cite{FHM}) and choosing $s\leq  C_2^{-8} K^8M_1^{-4} $,
we have
\begin{align*}
	\|Q_{m+1}(s)-Q_0\|_{L^\infty(\R^3)}\leq& C\| Q_{m+1}(s)-Q_0 \|_{L^2(\R^3)}^{\frac14}\|\D^2( Q_{m+1}(s)-Q_0)\|_{L^2(\R^3)}^{\frac34}
	\\
	\leq& C_2(M_1s )^{\frac18} M_1 ^{\frac38}\leq K,
\end{align*}
where $C_2$ is independent from $m$. Therefore, we prove Claim 1 by choosing
\[T_{M_1}:=\min\left\{ C_2^{-8} K^8M_1^{-4},\frac12M_1^{-4},\frac12M_1^{-1},\frac12\right\}.\]

\noindent{\bf Claim 2}: There exists a uniform $T>0$ with $T\leq T_{M_1}$ such that
\begin{align*}
	&\sup_{0\leq t\leq T}\big(\| Q_{m+1}- Q_{m}(t)\|_{H^1_{Q_e}(\R^3)}+\|v_{m+1}- v_{m} (t)\|_{L^2(\R^3)} \big)
	\\
	&+\|\D^2 (Q_{m+1}- Q_{m}) \|_{L^2(0,T ;L^2(\R^3))} +\|\D (v_{m+1}- v_{m}) \|_{L^2(0,T;L^2(\R^3))}
	\\
	\leq&\frac12\sup_{0\leq t\leq T}\big(\| Q_{m}- Q_{m-1}(t)\|_{H^1_{Q_e}(\R^3)}+\|v_{m}- v_{m-1} (t)\|_{L^2(\R^3)}\big)
	\\
	&+\frac12\|\D^2 (Q_{m}- Q_{m-1}) \|_{L^2(0,T ;L^2(\R^3))}  +\frac12\|\D (v_{m}- v_{m-1}) \|_{L^2(0,T;L^2(\R^3))}
\end{align*}
For given pairs $(Q_m,v_m)$ and $(Q_{m-1},v_{m-1})\in \mathcal V$, we have
	\begin{align*}
		&(\p_t -\Delta)(v_{m+1}-v_{m}) +\D (P_{m+1}-P_{m})
		\\
		=& \D\cdot[Q_m,h(Q_{m},Q_{m+1} )]-\D\cdot[Q_{m-1},h(Q_{m-1}, Q_m )]
		\\
		&-v_m\cdot\D v_m+v_{{m-1}}\cdot\D v_{m-1}+ \sigma(Q_m,\D Q_m))-\sigma(Q_{{m-1}},\D Q_{{m-1}})),
		\alabel{v +2}
		\\[2ex]
		&\D\cdot (v_{m+1}-v_{m})=0,\alabel{div v +2}
		\\[2ex]
		&\p_t(Q_{m+1}-Q_{m}) +[Q_m, \Omega_{m+1}]-[Q_{m-1}, \Omega_{m}]
		\\
		=&\D_\beta\p_{p^\beta}f_E(Q_m,\D Q_{m+1})-\D_\beta\p_{p^\beta}f_E(Q_{m-1},\D Q_{m})-v_m\cdot \D Q_m+v_{m-1}\cdot \D Q_{m-1}
		\\
		&-\p_Qf_E(Q_m,\D Q_m)+\p_Qf_E(Q_{m-1},\D Q_{m-1}) +g_B(Q_m)-g_B(Q_{m-1})
		.\alabel{Q +2}
	\end{align*}
	Multiplying \eqref{Q +2} by $-\Delta(Q_{m+1}-Q_{m})$  yields
	\begin{align*}
		&\frac12\int_{\R^3}|\D(Q_{m+1}-Q_m)|^2(\cdot,s)\,dx
		\\
		=&\int_0^s\int_{\R^3}\<[Q_m, \Omega_{m+1}]-[Q_{m-1}, \Omega_{m}],\Delta(Q_{m+1}-Q_{m})\>\,dxdt
		\\
		&-\int_0^s\int_{\R^3}\<\D_\beta\p_{p^\beta}f_E(Q_m,\D Q_{m+1})-\D_\beta\p_{p^\beta}f_E(Q_{m-1},\D Q_{m}),\Delta(Q_{m+1}-Q_{m})\>\,dxdt
		\\
		&-\int_0^s\int_{\R^3}\<-v_m\cdot \D Q_m+v_{m-1}\cdot \D Q_{m-1},\Delta(Q_{m+1}-Q_{m})\>\,dxdt
		\\
		&-\int_0^s\int_{\R^3}\<-\p_Qf_E(Q_m,\D Q_m)+\p_Qf_E(Q_{m-1},\D Q_{m-1}),\Delta(Q_{m+1}-Q_{m})\>\,dxdt
		\\
		&-\int_0^s\int_{\R^3}\<g_B(Q_m)-g_B(Q_{m-1}),\Delta(Q_{m+1}-Q_{m})\>\,dxdt
		.\alabel{Q d}
	\end{align*}

Using Young's inequality and \eqref{L4}, we compute the first term in the right-hand side of \eqref{Q d}
\begin{align*}
	&\int_0^s\int_{\R^3}\<[Q_m, \Omega_{m+1}]-[Q_{m-1}, \Omega_{m}],\Delta(Q_{m+1}-Q_{m})\>\,dxdt
	\\
	\leq &\eta\int_0^s\int_{\R^3}|\D^2(Q_{m+1}-Q_{m})|^2\,dxdt +C(\eta)\int_0^s\int_{\R^3}|\D (v_{m+1}-v_{m})|^2\,dxdt
	\\
	&+C(\eta)\int_0^s\|Q_{m}-Q_{m-1}\|_{L^\infty(\R^3)}^2\int_{\R^3}|v_{m}|^2\,dxdt
	\\
		\leq &\eta\int_0^s\int_{\R^3}|\D^2(Q_{m+1}-Q_{m})|^2\,dxdt +C(\eta)\int_0^s\int_{\R^3}|\D (v_{m+1}-v_{m})|^2\,dxdt
	\\
	&+C(\eta)M_1\int_0^s\(\int_{\R^3}|\D(Q_{m}-Q_{m-1})|^4\,dx\)^{\frac12}dt
	\\
	\leq &\eta\int_0^s\int_{\R^3}|\D^2(Q_{m+1}-Q_{m})|^2\,dxdt +C\int_0^s\int_{\R^3}|\D (v_{m+1}-v_{m})|^2\,dxdt
	\\
	&+\eta_1\int_0^s\int_{\R^3}|\D^2(Q_{m}-Q_{m-1})|^2\,dxdt+CM_1^4\int_0^s\int_{\R^3}|\D(Q_{m}-Q_{m-1})|^2\,dxdt
	,\alabel{Q d1}
\end{align*}
where $\eta$ and $\eta_1$ are some small constants to be chosen later.

Applying \eqref{L4} again to the second term in \eqref{Q d} and using \eqref{sec2 f_E} yields
\begin{align*}
	&-\int_0^s\int_{\R^3}\<\D_\beta\p_{p^\beta}f_E(Q_m,\D Q_{m+1})-\D_\beta\p_{p^\beta}f_E(Q_{m-1},\D Q_{m}),\Delta(Q_{m+1}-Q_{m})\>\,dxdt
	\\
	=&-\int_0^s\int_{\R^3}\p^2_{p^\beta_{ij} p^\nu_{kl}}f_E(Q_m,\D
Q_{m+1})\D^2_{\gamma\nu}(Q_{m+1})_{kl}\D^2_{\beta\gamma}(Q_{m+1}-Q_{m})_{ij} \,dxdt
	\\
	&+\int_0^s\int_{\R^3}\p^2_{p^\beta_{ij} p^\nu_{kl}}f_E(Q_{m-1},\D Q_{m})\D^2_{\gamma\nu}(Q_{m})_{kl}\D^2_{\beta\gamma}(Q_{m+1}-Q_{m})_{ij}
\,dxdt
	\\
	&-\int_0^s\int_{\R^3}\p^2_{p^\beta_{ij} Q_{kl}}f_E(Q_{m},\D Q_{m+1})\D_\gamma( Q_m)_{kl}\D^2_{\beta\gamma}(Q_{m+1}-Q_{m})_{ij}\,dxdt
	\\
	&+\int_0^s\int_{\R^3}\p^2_{p^\beta_{ij} Q_{kl}}f_E(Q_{m-1},\D Q_{m})\D_\gamma( Q_{m-1})_{kl}\D^2_{\beta\gamma}(Q_{m+1}-Q_{m})_{ij}\,dxdt
	\\
	\leq&-(\frac{\alpha}2-2\eta)\int_0^s\int_{\R^3}|\D^2 (Q_{m+1}-Q_{m})|^2 \,dxdt
	\\
	&-\int_0^s\int_{\R^3}\p^2_{p^\beta_{ij} p^\nu_{kl}}f_E(Q_m,\D Q_{m+1})\D^2_{\gamma\nu}(Q_{m})_{kl}\D^2_{\beta\gamma}(Q_{m+1}-Q_{m})_{ij}
\,dxdt
	\\
	&+\int_0^s\int_{\R^3}\p^2_{p^\beta_{ij} p^\nu_{kl}}f_E(Q_{m-1},\D Q_{m})\D^2_{\gamma\nu}(Q_{m})_{kl}\D^2_{\beta\gamma}(Q_{m+1}-Q_{m})_{ij}
\,dxdt
	\\
	&+C(\eta)\int_0^s\int_{\R^3}|\p^2_{p Q}f_E(Q_{m},\D Q_{m+1})\D Q_m-\p^2_{p Q}f_E(Q_{m-1},\D Q_{m})\D Q_{m-1}|^2\,dxdt
	\\
	\leq&-(\frac{\alpha}2-2\eta)\int_0^s\int_{\R^3}|\D^2 (Q_{m+1}-Q_{m})|^2 \,dxdt
	\\
	&+C(\eta)\int_0^s\(\|Q_m-Q_{m-1}\|_{L^\infty(\R^3)}^2\)\int_{\R^3}|\D^2 Q_{m}|^2  \,dx dt
	\\
	&+C(\eta) \int_0^s\int_{\R^3}(|\D Q_{m+1}-\D Q_{m} |^2+|\D Q_{m}|^2|Q_m-Q_{m-1}|^2)|\D Q_m|^2\,dxdt
	\\
	&+C(\eta) \int_0^s\int_{\R^3} |\D Q_{m}|^2|\D Q_{m}- \D Q_{m-1}|^2\,dxdt
	\\
	\leq&-(\frac{\alpha}2-2\eta)\int_0^s\int_{\R^3}|\D^2 (Q_{m+1}-Q_{m})|^2 \,dxdt+\eta_1\int_0^s\int_{\R^3}|\D^2(Q_{m+1}-Q_{m})|^2\,dxdt
	\\
	&+C(M_1+M_1^4)\int_0^s \int_{\R^3}|\D(Q_{m}-Q_{m-1})|^2\,dxdt
	.\alabel{Q d2}
\end{align*}
The remaining terms in \eqref{Q d} are
\begin{align*}
	&-\int_0^s\int_{\R^3}\<-v_m\cdot \D Q_m+v_{m-1}\cdot \D Q_{m-1},\Delta(Q_{m+1}-Q_{m})\>\,dxdt
	\\
	&-\int_0^s\int_{\R^3}\<-\p_Qf_E(Q_m,\D Q_m)+\p_Qf_E(Q_{m-1},\D Q_{m-1}),\Delta(Q_{m+1}-Q_{m})\>\,dxdt
	\\
	&-\int_0^s\int_{\R^3}\<g_B(Q_m)-g_B(Q_{m-1}),\Delta(Q_{m+1}-Q_{m})\>\,dxdt
	\\
	\leq&\eta\int_0^s\int_{\R^3}|\D^2 (Q_{m+1}-Q_{m})|^2 \,dxdt+C\int_0^s\int_{\R^3}|v_m|^2| \D Q_m-\D Q_{m-1}|^2\,dxdt
	\\
	&+C\int_0^s\int_{\R^3} |v_m-v_{m-1}|^2 |\D Q_{m-1}|^2+|\D Q_m|^4|Q_m-Q_{m-1}|^2\,dxdt
	\\
	&+C\int_0^s\int_{\R^3} \(|\D Q_{m-1}|^2+|\D Q_{m}|^2\)|\D (Q_m-Q_{m-1})|^2+|Q_m-Q_{m-1}|^2\,dxdt
	\\
	\leq&\eta\int_0^s\int_{\R^3}|\D^2(Q_{m+1}-Q_{m})|^2\,dxdt +C\int_0^s \int_{\R^3}|Q_m-Q_{m-1}|^2\,dxdt
	\\
	&+\eta_1\int_0^s\int_{\R^3}|\D^2 (Q_{m}-Q_{m-1})|^2 +|\D(v_m-v_{m-1})|^2\,dxdt
	\\
	&+C(M_1+M_1^4)\int_0^s \int_{\R^3}|\D(Q_{m}-Q_{m-1})|^2+|v_m-v_{m-1}|^2\,dxdt
	.\alabel{Q d3}
\end{align*}
Substituting \eqref{Q d1}-\eqref{Q d3} into \eqref{Q d}, we find
\begin{align*}
	&\frac12\int_{\R^3}|\D(Q_{m+1}-Q_m)|^2(\cdot,s)\,dx+\frac{\alpha}{4}\int_0^s\int_{\R^3}|\D^2 (Q_{m+1}-Q_{m})|^2 \,dxdt
	\\
	\leq&C\int_0^s\int_{\R^3}|\D (v_{m+1}-v_{m})|^2\,dxdt +C\int_0^s \int_{\R^3}|Q_m-Q_{m-1}|^2\,dxdt
	\\
	&+C\eta_1\int_0^s\int_{\R^3}|\D^2(Q_{m}-Q_{m-1})|^2+|\D(v_m-v_{m-1})|^2\,dxdt
	\\
	&+C(M_1+M_1^4)\int_0^s\int_{\R^3}|\D(Q_{m}-Q_{m-1})|^2+|v_m-v_{m-1}|^2 \,dxdt
	.\alabel{Q d4}
\end{align*}
Now, we compute the difference $(Q_{m+1}-Q_{m})$. Multiplying $\eqref{Q +2}$ by $(Q_{m+1}-Q_{m})$, one can show
\begin{align*}
&\frac12\int_{\R^3}|(Q_{m+1}-Q_m)|^2(\cdot,s)\,dx
\\
\leq &CM_1\int_0^s\int_{\R^3}|Q_{m+1}-Q_{m}|^2\,dxdt+ C\int_0^s\int_{\R^3}|\D (v_{m+1}-v_{m})|^2\,dxdt
\\
&+C\eta_1\int_0^s\int_{\R^3}|\D^2(Q_{m+1}-Q_{m})|^2+|\D(v_m-v_{m-1})|^2\,dxdt
\\
&+C(M_1+M_1^4)\int_0^s\int_{\R^3}|\D(Q_{m}-Q_{m-1})|^2+|v_m-v_{m-1}|^2+|Q_m-Q_{m-1}|^2\,dxdt
.\alabel{Q d0}
\end{align*}
Combining \eqref{Q d0} with \eqref{Q d4}, we find
\begin{align*}
	&\int_{\R^3}\(|(Q_{m+1}-Q_m)|^2+|\D(Q_{m+1}-Q_m)|^2\)(\cdot,s)\,dx
	\\
	&+\int_0^s\int_{\R^3}|\D^2 (Q_{m+1}-Q_{m})|^2 \,dxdt
	\\
	\leq&C\int_0^s\int_{\R^3}|\D (v_{m+1}-v_{m})|^2\,dxdt+C\int_0^s\int_{\R^3}|Q_{m+1}-Q_{m}|^2\,dxdt
	\\
	&+C(M_1+M_1^4)\int_0^s\int_{\R^3}|\D(Q_{m}-Q_{m-1})|^2+|v_m-v_{m-1}|^2+|Q_m-Q_{m-1}|^2\,dxdt
	\\
	&+C\eta_1\int_0^s\int_{\R^3}|\D^2(Q_{m+1}-Q_{m})|^2+|\D(v_m-v_{m-1})|^2\,dxdt
	.\alabel{Q d final}
\end{align*}
Next we compute the difference involving $v_m$. Multiplying \eqref{v +2} by $(v_{m+1}-v_{m})$, we have
	\begin{align*}
		&\frac12\int_{\R^3}|(v_{m+1}-v_m)|^2(\cdot,s)\,dx +\frac34\int_0^s\int_{\R^3}|\D(v_{m+1}-v_{m})|^2\,dxdt
		\\
		\leq&C\int_0^s\int_{\R^3}|\sigma(Q_m,\D Q_m))-\sigma(Q_{{m-1}},\D Q_{{m-1}})|^2\,dxdt
		\\
		& +\int_0^s\int_{\R^3}\<\D\cdot[Q_m,h(Q_{m},Q_{m+1} )]-\D\cdot[Q_{m-1},h(Q_{m-1}, Q_m )], v_{m+1}-v_{m} \>\,dx
		\\
		&+\int_0^s\int_{\R^3}\<-v_m\cdot\D v_m+v_{{m-1}}\cdot\D v_{m-1},v_{m+1}-v_{m}\>\,dxdt
		.\alabel{v d}
	\end{align*}
Using \eqref{L4}, we find
\begin{align*}
	&C\int_0^s\int_{\R^3}|\sigma(Q_m,\D Q_m)-\sigma(Q_{{m-1}},\D Q_{{m-1}})|^2\,dxdt
	\\
	\leq& C\int_0^s\int_{\R^3}|\D(Q_m-Q_{m-1})|^2\(|\D Q_{m}|^2+|\D Q_{m-1}|^2\)+|Q_m-Q_{m-1}|^2|\D Q_{m-1}|^4\,dxdt
	\\
	\leq& \eta_1\int_0^s\int_{\R^3}|\D^2(Q_m-Q_{m-1})|^2dxdt+C(M_1+M_1^4)\int_0^s\int_{\R^3}|\D(Q_m-Q_{m-1})|^2dxdt
	.\alabel{v d1}
\end{align*}
Applying \eqref{L4} to the last term in \eqref{v d}, we have
	\begin{align*}
		&\int_0^s\int_{\R^3}\<-v_m\cdot\D v_m+v_{{m-1}}\cdot\D v_{m-1},v_{m+1}-v_{m}\>\,dxdt
		\\
		&=\int_0^s\int_{\R^3}\(-v_m\cdot\D( v_m-v_{m-1})+(v_{m-1}-v_{m})\cdot\D v_{m-1}\)_j(v_{m+1}-v_{m})_j\,dxdt
		\\
		&\leq C\int_0^s\int_{\R^3}|v_m||\D (v_m-v_{m-1})||v_{m+1}-v_{m}|\,dxdt
		\\
		&+C\int_0^s\int_{\R^3}|v_{m-1}|(|\D (v_m-v_{m-1})||v_{m+1}-v_{m}|+|v_m-v_{m-1}||\D (v_{m+1}-v_{m})|)\,dxdt
		\\
		&\leq \eta_1\int_0^s\int_{\R^3}|\D (v_m-v_{m-1})|^2\,dxdt+\frac14\int_0^s\int_{\R^3}|\D (v_{m+1}-v_m)|^2\,dxdt
		\\
		&+CM_1^4\int_0^s \int_{\R^3}|v_{m+1}-v_{m}|^2+|v_m-v_{m-1}|^2\,dxdt
		.\alabel{v d2.0}
	\end{align*}
Thus we can write \eqref{v d} as
\begin{align*}
	&\frac12\int_{\R^3}|(v_{m+1}-v_m)|^2(\cdot,s)\,dx+\frac12\int_0^s\int_{\R^3}|\D(v_{m+1}-v_{m})|^2\,dxdt
	\\
	\leq&3\eta_1\int_0^s\int_{\R^3}|\D^2(Q_m-Q_{m-1})|^2+|\D (v_m-v_{m-1})|^2 \,dxdt
	\\
	&+CM_1^4\int_0^s\int_{\R^3}|\D(Q_m-Q_{m-1})|^2+|v_{m+1}-v_{m}|^2 +|v_m-v_{m-1}|^2\,dxdt
	\\
	& +\int_0^s\int_{\R^3}\<\D\cdot[Q_m,h(Q_{m},Q_{m+1} )]-\D\cdot[Q_{m-1},h(Q_{m-1}, Q_m )],(v_{m+1}-v_{m})\>\,dx
	.\alabel{v d2}
\end{align*}
It follows from  Lemma \ref{Lie} with the substitution
$A=Q_m,B=h(Q_{m},Q_{m+1}), F=\Omega_{m+1}$ and the other three cases that
\begin{align*}
	&\< [Q_m,\Omega_{m+1}]-[Q_{m-1},\Omega_{m}],h(Q_{m},Q_{m+1} )-h(Q_{m-1}, Q_m ))\>
	\\
	=&\< [Q_m,h(Q_{m},Q_{m+1} )]-[Q_{m-1},h(Q_{m-1}, Q_m )],\D(v_{m+1}-v_{m})\>
	.\alabel{lie h-}
\end{align*}
Multiplying \eqref{Q +2} by
$\big(h(Q_{m},Q_{m+1})-h(Q_{m-1}, Q_m)\big)$ and using \eqref{lie h-}, we obtain
\begin{align*}
	&\int_0^s\int_{\R^3}\<\D\cdot[Q_m,h(Q_{m},Q_{m+1} )]-\D\cdot[Q_{m-1},h(Q_{m-1}, Q_m )],(v_{m+1}-v_{m})\>\,dxdt
	\\
	&+\int_0^s\int_{\R^3}|h(Q_{m-1}, Q_m)-h(Q_{m},Q_{m+1})|^2\,dxdt
	\\
	=&\int_0^s\int_{\R^3}\<\p_t(Q_{m+1}-Q_{m}),h(Q_{m},Q_{m+1})-h(Q_{m-1}, Q_m)\>\,dxdt
	\\
	&-\int_0^s\int_{\R^3}\<-v_m\cdot \D Q_m+v_{m-1}\cdot \D Q_{m-1},h(Q_{m},Q_{m+1})-h(Q_{m-1}, Q_m)\>\,dxdt
	\\
	&-\int_0^s\int_{\R^3}\<g_B(Q_m)-g_B(Q_{m-1}),h(Q_{m},Q_{m+1})-h(Q_{m-1}, Q_m)\>\,dxdt
	\\
	\leq&\frac12\int_0^s\int_{\R^3}|h(Q_{m-1}, Q_m)-h(Q_{m},Q_{m+1})|^2\,dxdt+\eta_3\int_0^s\int_{\R^3}|\p_t(Q_{m+1}-Q_{m})|^2\,dxdt
	\\
	&+C(\eta_3)\int_0^s\int_{\R^3}|\p_Qf_E(Q_m,\D Q_m)-\p_Qf_E(Q_{m-1},\D Q_{m-1})|^2\,dxdt
		\\
	&+C(\eta_3)\int_0^s\int_{\R^3}|v_m\cdot \D Q_m-v_{m-1}\cdot \D Q_{m-1}|^2+|g_B(Q_m)-g_B(Q_{m-1})|^2\,dxdt
	\\
	&+\int_0^s\int_{\R^3}\<\p_t(Q_{m+1}-Q_{m}),\D_\beta\p_{p^\beta}f_E(Q_m,\D Q_{m+1})-\D_\beta\p_{p^\beta}f_E(Q_{m-1}\>\,dxdt
	.\alabel{h d1}
\end{align*}
In a similar calculation to \eqref{Q d2}, using \eqref{sec2 f_E}, we estimate the last term in \eqref{h d1}
\begin{align*}
	&\int_0^s\int_{\R^3}\<\p_t(Q_{m+1}-Q_{m}),\D_\beta\(\p_{p^\beta}f_E(Q_m,\D Q_{m+1})- \p_{p^\beta}f_E(Q_{m-1},\D Q_{m}\)\>\,dxdt
	\\
	=&\int_0^s\int_{\R^3}\p^2_{p^\beta_{ij} p^\nu_{kl}}f_E(Q_m,\D Q_{m+1})\D^2_{\beta\nu}(Q_{m+1})_{kl}\p_t(Q_{m+1}-Q_{m})_{ij} \,dxdt
	\\
	&-\int_0^s\int_{\R^3}\p^2_{p^\beta_{ij} p^\nu_{kl}}f_E(Q_{m-1},\D Q_{m})\D^2_{\beta\nu}(Q_{m})_{kl}\p_t(Q_{m+1}-Q_{m})_{ij} \,dxdt
	\\
	&+\int_0^s\int_{\R^3}\p^2_{p^\beta_{ij} Q_{kl}}f_E(Q_{m},\D Q_{m+1})\D_\beta( Q_m)_{kl}\p_t(Q_{m+1}-Q_{m})_{ij}\,dxdt
	\\
	&-\int_0^s\int_{\R^3}\p^2_{p^\beta_{ij} Q_{kl}}f_E(Q_{m-1},\D Q_{m})\D_\beta( Q_{m-1})_{kl}\p_t(Q_{m+1}-Q_{m})_{ij}\,dxdt
	\\
	\leq&- \frac{\alpha}4 \int_{\R^3}|\D (Q_{m+1}-Q_{m})|^2 (\cdot,s)\,dx +2\eta_3\int_0^s\int_{\R^3}|\p_t(Q_{m+1}-Q_{m})|^2 \,dxdt
	\\
	&+\eta_2 \int_0^s \int_{\R^3}|\D^2(Q_{m+1}-Q_{m})|^2\,dx dt+ CM_1^2\int_0^s \int_{\R^3}|\D (Q_{m+1}-Q_{m})|^2 \,dx dt
	\\
	&+\eta_1 \int_0^s \int_{\R^3}|\D^2(Q_{m}-Q_{m-1})|^2\,dx dt+ CM_1^4\int_0^s \int_{\R^3}|\D (Q_{m}-Q_{m-1})|^2 \,dx dt
	.\alabel{h d2}
\end{align*}
Here we used the fact from   \eqref{Q +1} that
\begin{align*}
	\int_{\R^3}|\p_t Q_m|^2\,dx \leq& C\int_{\R^3}|\D v_{m+1}|^2+|v_m|^2|\D Q_m|^2+|\D^2 Q_{m+1}|^2\,dx
	\\
	&+C\int_{\R^3}|\D Q_{m+1}|^2|\D Q_{m}|^2+|\D Q_{m}|^4+|g_B(Q_m)|^2\,dx\leq CM_1.
\end{align*} For the term $\p_t(Q_{m+1}-Q_{m})$, it follows from \eqref{Q +2} that
	\begin{align*}
		&\int_0^s\int_{\R^3}|\p_t(Q_{m+1}-Q_{m})|^2 \,dxdt
		\\
		\leq& C\int_0^s\int_{\R^3}|v_m\cdot \D Q_m-v_{m-1}\cdot \D Q_{m-1}|^2+|[Q_m, \Omega_{m+1}]-[Q_{m-1}, \Omega_{m}]|^2\,dxdt
		\\
		&+C\int_0^s\int_{\R^3}|h(Q_m, Q_{m+1})-h(Q_{m-1}, Q_{m})|^2+|g_B(Q_m)-g_B(Q_{m-1})|^2\,dxdt
		\\
		\leq&C\int_0^s\int_{\R^3}|\D^2(Q_{m+1}-Q_{m})|^2+|\D(v_{m+1}-v_{m})|^2\,dxdt
		\\
		&+C\int_0^s\int_{\R^3}|\D^2(Q_{m}-Q_{m-1})|^2+|\D(v_{m}-v_{m-1})|^2\,dxdt
		\\
		&+CM_1\int_0^s \int_{\R^3}|\D(Q_{m+1}-Q_{m})|^2+|\D(Q_{m}-Q_{m-1})|^2\,dxdt
		\\
		&+CM_1 \int_0^s \int_{\R^3}|v_m-v_{m-1}|^2+|Q_m-Q_{m-1}|^2\,dxdt
		.\alabel{h d3}
	\end{align*}
Substituting \eqref{h d2}-\eqref{h d3} into \eqref{h d1} with sufficiently small $\eta_3$, we find
\begin{align*}
	&\int_0^s\int_{\R^3}\<\D\cdot[Q_m,h(Q_{m},Q_{m+1} )]-\D\cdot[Q_{m-1},h(Q_{m-1}, Q_m )],(v_{m+1}-v_{m})\>\,dxdt
	\\
	&+\frac12\int_0^s\int_{\R^3}|h(Q_{m-1}, Q_m)-h(Q_{m},Q_{m+1})|^2\,dxdt
	\\
	 \leq&2\eta_2 \int_0^s \int_{\R^3}|\D^2(Q_{m+1}-Q_{m})|^2\,dx dt+\frac14\int_0^s\int_{\R^3} |\D(v_{m+1}-v_{m})|^2\,dxdt
	 \\
	&+C\eta_1\int_0^s\int_{\R^3}|\D^2(Q_{m}-Q_{m-1})|^2+|\D(v_{m}-v_{m-1})|^2\,dxdt
	\\
	&+C(M_1+M_1^4)\int_0^s \int_{\R^3}|\D(Q_{m+1}-Q_{m})|^2+|\D(Q_{m}-Q_{m-1})|^2\,dxdt
	\\
	&+C(M_1+M_1^4)\int_0^s \int_{\R^3}|v_m-v_{m-1}|^2+|Q_m-Q_{m-1}|^2\,dxdt
	.\alabel{h d f}
\end{align*}
Adding \eqref{h d f} to \eqref{v d2}, we have
\begin{align*}
	&\frac12\int_{\R^3}|v_{m+1}-v_m|^2(\cdot,s)\,dx+\frac14\int_0^s\int_{\R^3}|\D(v_{m+1}-v_{m})|^2\,dxdt
	\\
	\leq&2\eta_2 \int_0^s \int_{\R^3}|\D^2(Q_{m+1}-Q_{m})|^2\,dx dt
	\\
	&+C\eta_1\int_0^s\int_{\R^3}|\D^2(Q_{m}-Q_{m-1})|^2+|\D(v_{m}-v_{m-1})|^2\,dxdt
	\\
	&+C(M_1+M_1^4)\int_0^s \int_{\R^3}|\D(Q_{m+1}-Q_{m})|^2+|\D(Q_{m}-Q_{m-1})|^2\,dxdt
	\\
	&+C(M_1+M_1^4)\int_0^s \int_{\R^3}|v_m-v_{m-1}|^2+|Q_m-Q_{m-1}|^2\,dxdt
	.\alabel{v f}
\end{align*}
Substituting \eqref{Q d final} into \eqref{v f} and choosing sufficiently small $\eta_2$, we obtain
\begin{align*}
	&\sup_{0\leq s\leq T}\int_{\R^3}\(|Q_{m+1}-Q_m|^2+|\D(Q_{m+1}-Q_m)|^2+|v_{m+1}-v_m|^2\)(\cdot,s)\,dx
	\\
	&+\int_0^T\int_{\R^3}|\D^2 (Q_{m+1}-Q_{m})|^2+|\D(v_{m+1}-v_{m})|^2\,dxdt
	\\
	\leq&  C_3\eta_1\int_0^T\int_{\R^3}|\D^2(Q_{m+1}-Q_{m})|^2+ |\D^2(Q_{m}-Q_{m-1})|^2+|\D(v_{m}-v_{m-1})|^2\,dxdt
	\\
	&+C_3(M_1+M_1^2+M_1^4)s\sup_{0\leq s\leq T}\int_{\R^3}\(|\D(Q_{m+1}-Q_{m})|^2+|\D(Q_{m}-Q_{m-1})|^2\)(\cdot,s)\,dx
	\\
	&+C_3(M_1+M_1^4)s\sup_{0\leq s\leq T}  \int_{\R^3}\(|v_m-v_{m-1}|^2+|Q_m-Q_{m-1}|^2\)(\cdot,s)\,dx,
	\alabel{diff}
\end{align*}
where $C_3$ is a constant independent of $m$. Then for $m>1$, choosing $\eta_1=\frac18C_3^{-1}$, we prove the claim 2 with
\[T:=\min\{(8C_3M_1)^{-1},(8C_3M_1^{4})^{-1},T_{M_1}\}.\]
It follows from Claim 1 that $(Q_{m+1},v_{m+1})$  and $(Q_m,v_m)$ have two limits. By Claim 2, $(Q_{m+1},v_{m+1})$ is a Cauchy sequence in
$L^{\infty} ([0, T]; H^1_{Q_e}\times L^2) \cap L^2([0,T];  H^2_{Q_e}\times H^1])$, so
two weak limit of  $(Q_{m+1},v_{m+1})$  and $(Q_m,v_m)$ are the same. One can estimate $P_m$ using \eqref{v +1} and the argument in \eqref{eq
P}-\eqref{eq P2}. As $m\to \infty$, we prove Theorem \ref{thm loc}.
\end{proof}

\medskip\noindent
{\bf Acknowledgement:} { The third author Y. Mei is supported by the National Natural Science Foundation of China No. 12101496.}

\end{document}